\numberwithin{equation}{section}
\numberwithin{figure}{section}
\theoremstyle{plain}
\newtheorem{thm}{\protect\theoremname}
  \theoremstyle{remark}
  \newtheorem{rem}[thm]{\protect\remarkname}
  \theoremstyle{definition}
  \newtheorem{defn}[thm]{\protect\definitionname}
  \theoremstyle{plain}
  \newtheorem{prop}[thm]{\protect\propositionname}
  \theoremstyle{plain}
  \newtheorem{lem}[thm]{\protect\lemmaname}
  \theoremstyle{plain}
  \newtheorem{cor}[thm]{\protect\corollaryname}
  \providecommand{\corollaryname}{Corollary}
  \providecommand{\definitionname}{Definition}
  \providecommand{\lemmaname}{Lemma}
  \providecommand{\propositionname}{Proposition}
  \providecommand{\remarkname}{Remark}
\providecommand{\theoremname}{Theorem}
\begin{document}

\title[A compactness result for low dimensional manifolds]{A compactness result for scalar-flat metrics on low dimensional manifolds
with umbilic boundary }

\author{Marco G. Ghimenti}
\address{M. G. Ghimenti, \newline Dipartimento di Matematica Universit\`a di Pisa
Largo B. Pontecorvo 5, 56126 Pisa, Italy}
\email{marco.ghimenti@unipi.it}

\author{Anna Maria Micheletti}
\address{A. M. Micheletti, \newline Dipartimento di Matematica Universit\`a di Pisa
Largo B. Pontecorvo 5, 56126 Pisa, Italy}
\email{a.micheletti@dma.unipi.it.}

\begin{abstract}
Let $(M,g)$ a compact Riemannian $n$-dimensional manifold with umbilic
boundary. It is well know that, under certain hypothesis, in the conformal
class of $g$ there are scalar-flat metrics that have $\partial M$
as a constant mean curvature hypersurface. In this paper we prove
that these metrics are a compact set in the case of low dimensional
manifolds, that is $n=6,7,8$, provided that the Weyl tensor is always
not vanishing on the boundary.
\end{abstract}

\keywords{Scalar flat metrics, Umbilic boundary, Yamabe problem, Compactness,
low dimensions}

\subjclass[2000]{35J65, 53C21}
\maketitle

\section{Introduction}

Let $(M,g)$ be a $n$-dimensional ($n\ge3$) compact Riemannian manifold
with boundary $\partial M$. In \cite{Es,Es2} J. Escobar investigated
the question if $M$ can be conformally deformed to a scalar flat
manifold with boundary of constant mean curvature hypersurface. This
problem is particularly interesting because it is a higher-dimensional
generalization of the well known Riemann mapping Theorem and it is
equivalent to finding positive solutions to a linear equation on the
interior of $M$ with a critical nonlinear boundary condition of Neumann
type:
\begin{equation}
\left\{ \begin{array}{cc}
L_{g}u=0 & \text{ in }M\\
B_{g}u+(n-2)u^{\frac{n}{n-2}}=0 & \text{ on }\partial M
\end{array}\right..\label{eq:Pconf}
\end{equation}
Here $L_{g}=\Delta_{g}-\frac{n-2}{4(n-1)}R_{g}$ where $-\Delta_{g}$
is the Laplace-Beltrami operator on $(M,g)$ and $R_{g}$ the scalar
curvature of $M$ and $B_{g}=-\frac{\partial}{\partial\nu}-\frac{n-2}{2}h_{g}$,
where $\nu$ is the outward normal to $\partial M$ and $h_{g}$ is
the mean curvature of the boundary. 

The existence of solutions in established by Escobar \cite{Es}, Marques
\cite{M1}, Almaraz \cite{A3}, Chen \cite{ch}, Mayer and Ndiaye
\cite{MN}. Once the existence of solutions of (\ref{eq:Pconf}) is
settled, it is natural to study the compactness of the full set of
solutions. Defined 
\[
Q(M,\partial M):=\inf\left\{ Q(u)\ :\ u\in H^{1}(M),\ u\not\equiv0\text{ on }\partial M\right\} ,
\]
where
\[
Q(u):=\frac{\int\limits _{M}\left(|\nabla u|^{2}+\frac{n-2}{4(n-1)}R_{g}u^{2}\right)dv_{g}+\int\limits _{\partial M}\frac{n-2}{2}h_{g}u^{2}d\sigma_{g}}{\left(\int\limits _{\partial M}|u|^{\frac{2(n-1)}{n-2}}d\sigma_{g}\right)^{\frac{n-2}{n-1}}},
\]
we have that when $Q(M,\partial M)\le0$ the solution is unique up
to a constant factor. The situation turns out to be delicate if $Q(M,\partial M)>0$
and the underlying manifold is not the euclidean ball (in the case
of the euclidean ball the set of solution is known to be non compact).
Compactness has be proven firstly by Felli and Ould Ahmedou in \cite{FA}
for any dimension $n\ge3$ in the case of locally conformally flat
manifolds with umbilic boundary. If the dimension of the manifold
is $n\ge7$ and the trace-free second fundamental form in non zero
everywhere on $\partial M$, Almaraz in \cite{Al} proved compactness.
Very recently, Kim Musso and Wei \cite{KMW} showed that compactness
continues to hold when $n=4$ and when $n=6,7$ and the trace-free
second fundamental form in non zero everywhere on $\partial M$.

Compactness was proved also by the authors in \cite{GM} for manifold
with umbilic boundary when $n=8$ and the Weyl tensor of the boundary
is always different from zero, or if $n>8$ and the Weyl tensor of
$M$ is always different from zero on the boundary. An example of
non compactness is given for $n\ge25$ and manifolds with umbilic
boundary in \cite{A2}. We recall that the boundary of $M$ is called
\emph{umbilic} if the trace-free second fundamental form of $\partial M$
is zero everywhere.

In the present work we are interested to extend the result of \cite{GM}
to dimension $n=6,7,8$ when the Weyl tensor of $M$ is always different
from zero on the boundary. Namely we want to prove compactness of
the set of positive solutions to
\begin{equation}
\left\{ \begin{array}{cc}
L_{g}u=0 & \text{ in }M\\
B_{g}u+(n-2)u^{p}=0 & \text{ on }\partial M
\end{array}\right.\label{eq:Prob-2}
\end{equation}
where $1\le p\le\frac{n}{n-2}$ and the boundary of $M$ is umbilic.
Our main result is the following.
\begin{thm}
\label{thm:main}Let $(M,g)$ a smooth, $n$-dimensional Riemannian
manifold of positive type with regular umbilic boundary $\partial M$.
Suppose that $n=6,7,8$ and that the Weyl tensor $W_{g}$ is not vanishing
on $\partial M$. Then, given $\bar{p}>1$, there exists a positive
constant $C$ such that, for any $p\in\left[\bar{p},\frac{n}{n-2}\right]$
and for any $u>0$ solution of (\ref{eq:Prob-2}), it holds 
\[
C^{-1}\le u\le C\text{ and }\|u\|_{C^{2,\alpha}(M)}\le C
\]
for some $0<\alpha<1$. The constant $C$ does not depend on $u,p$. 
\end{thm}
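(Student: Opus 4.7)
The plan is to argue by contradiction in the usual blow-up style: assume there exist sequences $p_k\in[\bar p,n/(n-2)]$ and $u_k>0$ solving (\ref{eq:Prob-2}) with $\max_M u_k\to\infty$, and derive a contradiction with the non-vanishing of $W_g$ on $\partial M$. The lower bound $u\ge C^{-1}$ and the $C^{2,\alpha}$ estimate then follow from the Harnack inequality and standard elliptic regularity once the $L^\infty$ bound is in place, so the core task is the upper bound.

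First I would carry out the standard concentration analysis adapted to the boundary setting (as in Felli--Ould Ahmedou, Almaraz, and our previous paper \cite{GM}). Interior blow-up is ruled out by the fact that $u_k$ satisfies a linear interior equation with homogeneous Neumann-type nonlinearity only on $\partial M$; together with the positive-type assumption, all concentration must occur at boundary points. After passing to a subsequence one extracts a finite set of blow-up points $\{q^{(1)},\dots,q^{(N)}\}\subset\partial M$ and, via the usual Schoen-type selection/iteration argument in Fermi coordinates near each $q^{(j)}$, one shows each point is isolated and simple. This reduces the problem to analysing a single isolated simple blow-up point $q_k\to q_\infty\in\partial M$ with rescaled solutions converging in $C^2_{\mathrm{loc}}$ to the standard bubble $U$ on $\mathbb{R}^n_+$.

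The central step is a local boundary Pohozaev identity applied on a half-ball $B^+_\rho(q_k)$ in Fermi coordinates. On the one hand, the surface term can be computed exactly in terms of the limit profile $U$, yielding a strictly positive quantity (of order $\mu_k^{n-2}$, where $\mu_k^{-1}=u_k(q_k)$). On the other hand, the volume term has to be expanded to sufficient order in the metric; thanks to the umbilicity of $\partial M$, the first non-trivial geometric contribution is quadratic in $W_g(q_k)$. For $n\ge 9$ the naive bubble already produces the leading $|W_g(q_\infty)|^2$ term, but for $n=6,7,8$ this expansion is not fine enough: I would plug in a refined approximate solution obtained by adding correction terms $v_k^{(1)},v_k^{(2)},\dots$ that solve linearized equations with right-hand sides built from curvature invariants of $(M,g)$ at $q_k$, as in the Khuri--Marques--Schoen scheme adapted to umbilic boundaries. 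The number of correction terms needed is dictated by $n$ and is precisely what makes dimensions $6,7,8$ the delicate regime.

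The hard part, and the place where the dimension restriction $n\le 8$ interacts with the hypothesis on $W_g$, will be controlling the remainders in this refined expansion: one must show that after subtracting the corrected ansatz the error is $o(\mu_k^{n-2})$, so that the sign of the Pohozaev volume term is dictated by a universal positive constant times $|W_g(q_\infty)|^2$. Matching this positive contribution against the sign forced by the surface integral gives the desired contradiction, provided $W_g(q_\infty)\neq0$. Once no blow-up sequence exists, one obtains $\|u\|_{L^\infty}\le C$ uniformly in $p\in[\bar p,n/(n-2)]$, and the remaining bounds of Theorem \ref{thm:main} follow from Harnack and Schauder estimates applied to (\ref{eq:Prob-2}).
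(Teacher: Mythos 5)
Your high-level blueprint -- contradiction via blow-up, reduction to isolated simple blow-up points, a boundary Pohozaev identity with a refined approximate solution in low dimensions -- matches the overall strategy of the paper, but the description of the refinement step is wrong in the one place where it matters. The paper does not add a tower of correction terms $v_k^{(1)},v_k^{(2)},\dots$ whose number grows as $n$ decreases. It uses a \emph{single} correction $\gamma$, solving the fixed linear Neumann problem (\ref{eq:vqdef}) forced by the quartic curvature terms in the metric expansion (umbilicity kills the quadratic ones), and Proposition~\ref{prop:stimawi} shows $v_i\approx U+\delta_i^2\gamma_{x_i}$ is already sharp at the order needed for all $n\ge 6$. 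The real obstruction in $n=6,7,8$ is different: unlike the closed Yamabe case, $\gamma$ has no closed form here, and the negative coefficient of $R_{ninj}^2$ in~(\ref{eq:poho1}) means one must prove a lower bound on $-\int\gamma\Delta\gamma$ that is comparable to $R_{ninj}^2$. The paper does this by decomposing $\gamma=\Phi+E$ with $\Phi=\tilde\Phi_1+\tilde\Phi_2$ explicit rational functions (Lemmas~\ref{lem:Phitilda2},~\ref{lem:Phitilda1}, Corollary~\ref{cor:Phi1e2}) and $E$ a harmonic remainder, and then establishing the sign inequality $\int_{\partial\mathbb{R}^n_+}qE\,d\bar y\ge 0$ (Lemma~\ref{lem:qE}) by showing $E$ is tangent to the Nehari manifold at $U$ and invoking the second-variation positivity of the trace Sobolev functional. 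Combined with the symmetry cancellations (Lemma~\ref{lem:Phi1}) and the painstaking but elementary evaluations of the $\tilde\Phi_1$-integrals (Lemmas~\ref{lem:stimafinalegamma},~\ref{lem:stimafinalegamma8}, including a free parameter $b$ tuned for $n=8$), this is what yields the positive Pohozaev lower bound~(\ref{eq:pohofinale7})--(\ref{eq:pohofinale8}). Your proposal, as stated, provides no mechanism to obtain any of this.

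Two further points. First, for $n=6$ the Pohozaev geometric contribution is of order $\delta^4=\delta^{n-2}$, the same as the Green's function upper bound, so half-balls alone do not separate the two scales; the paper switches to cylinders $D_r^+$ precisely to generate the extra $\log(1/\delta)$ factor in~(\ref{eq:pohofinale6}). Your proposal never distinguishes $n=6$ from $n=7,8$, so as written it would fail there. Second, the reduction ``one shows each point is isolated and simple'' is not a preliminary step that precedes the Pohozaev analysis: Proposition~\ref{prop:isolato->semplice} (isolated implies isolated simple) itself relies on the Pohozaev sign condition, which in turn relies on the whole analysis of $\gamma$. The paper's logical order is Pohozaev sign first, then both isolation-simplicity and Weyl vanishing; your proposal inverts this, and without the sign estimate already in hand the claimed selection argument does not close.
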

Our strategy follows the argument of the seminal paper of Khuri Marques
and Schoen \cite{KMS}. A crucial step is to provide a sharp correction
term (see Subsection \ref{subsec:gamma}) for the usual approximation
of a rescaled solution by a bubble around an isolated simple blow
up point. This sharp correction term is a solution of a suitable linearized
equation (see (\ref{eq:vqdef})). The assumption of the umbilicity
of the boundary forces us to deal to higher order terms in the expansion
of the metric tensor, and this makes the proof of the result technically
hard. Moreover, it determines the right hand side of the equation
(\ref{eq:vqdef}), which gives the aforementioned correction term.

Another crucial step relies on a classical local argument with a Pohozaev
type identity and we need a local Pohozaev sign condition which is
essential for the proof. In the case of low dimensional manifolds
this requires a very accurate pointwise estimate of the correction
term which seems not to have an explicit form in the case of boundary
Yamabe problem. This process is somewhat inspired to the strategy
used by Kim Musso and Wei \cite{KMW} to estimate the correction term
on low dimensional manifold with non umbilic boundary.

The paper is organized as follows: in Section \ref{sec:Prel} we provide
some necessary preliminary notions; in particular in Subsection \ref{subsec:KMS}
we introduce some type of blow up points and in Subsection \ref{subsec:gamma}
we define the correction term. Section \ref{sec:characterization}
contains an accurate description of the correction term, and the Pohozaev
sign condition is studied in Section \ref{sec:7}, for the case $n=7,8$,
and in Section \ref{sec:6}, for the case $n=6$. The proof of Theorem
\ref{thm:main} is shown in Section \ref{sec:Proof}. Some technical
proofs are postponed to the Appendix.

\section{\label{sec:Prel}Preliminaries and notations}
\begin{rem}
We collect here our main notations. We will use the indices $1\le i,j,k,m,p,r,s,t,\tau\le n-1$
and $1\le a,b,c,d\le n$. Moreover we use the Einstein convention
on repeated indices. We denote by $g$ the Riemannian metric, by $R_{abcd}$
the full Riemannian curvature tensor, by $R_{ab}$ the Ricci tensor
and by $R_{g}$ the scalar curvature of $(M,g)$; moreover the Weyl
tensor of $(M,g)$ will be denoted by $W_{g}$. The bar over an object
(e.g. $\bar{W}_{g}$) will means the restriction to this object to
the metric of $\partial M$. Finally, on the half space $\mathbb{R}_{+}^{n}=\left\{ y=(y_{1},\dots,y_{n-1},y_{n})\in\mathbb{R}^{n},\!\ y_{n}\ge0\right\} $
we set $B_{r}(y_{0})=\left\{ y\in\mathbb{R}^{n},\!\ |y-y_{0}|\le r\right\} $
and $B_{r}^{+}(y_{0})=B_{r}(y_{0})\cap\left\{ y_{n}>0\right\} $.
When $y_{0}=0$ we will use simply $B_{r}=B_{r}(y_{0})$ and $B_{r}^{+}=B_{r}^{+}(y_{0})$.
On the half ball $B_{r}^{+}$ we set $\partial'B_{r}^{+}=B_{r}^{+}\cap\partial\mathbb{R}_{+}^{n}=B_{r}^{+}\cap\left\{ y_{n}=0\right\} $
and $\partial^{+}B_{r}^{+}=\partial B_{r}^{+}\cap\left\{ y_{n}>0\right\} $.
On $\mathbb{R}_{+}^{n}$ we will use the following decomposition of
coordinates: $(y_{1},\dots,y_{n-1},y_{n})=(\bar{y},y_{n})=(z,t)$
where $\bar{y},z\in\mathbb{R}^{n-1}$ and $y_{n},t\ge0$.

Fixed a point $q\in\partial M$, we denote by $\psi_{q}:B_{r}^{+}\rightarrow M$
the Fermi coordinates centered at $q$. We denote by $B_{g}^{+}(q,r)$
the image of $\psi_{q}(B_{r}^{+})$. When no ambiguity is possible,
we will denote $B_{g}^{+}(q,r)$ simply by $B_{r}^{+}$, omitting
the chart $\psi_{q}$.

We recall that $\omega_{n-2}$ is the $n-1$ dimensional spherical
element.
\end{rem}
Since the boundary $\partial M$ of $M$ is umbilic, it is well know
the existence of a conformal metric related to $g$ and the existence
of the conformal Fermi coordinates, which will simplify the future
computations. 

Given $q\in\partial M$ there exists a conformally related metric
$\tilde{g}_{q}=\Lambda_{q}g$ such that some geometric quantities
at $q$ have a simpler form which will be summarized in the next claim.
We also know that $\Lambda_{q}(q)=1,\ \frac{\partial\Lambda_{q}}{\partial y_{k}}(q)=0\text{ for all }k=1,\dots,n-1.$
In order to simplify notations, we will omit the \emph{tilde} symbol
and we will omit the fermi conformal coordinates $\psi_{q}:B_{r}^{+}\rightarrow M$
whenever it is not needed, so we will write $y\in B_{r}^{+}$instead
of $\psi_{q}(y)\in M$, $0$ instead of $q=\psi_{q}(0)$, $u$ instead
of $u\circ\psi_{q}$ and so on.
\begin{rem}
\label{rem:confnorm}In Fermi conformal coordinates around $q\in\partial M$,
it holds (see \cite{M1})
\begin{equation}
|\text{det}g_{q}(y)|=1+O(|y|^{N})\text{ for some }N\text{ large}\label{eq:|g|}
\end{equation}
\begin{eqnarray}
|h_{ij}(y)|=O(|y^{4}|) &  & |h_{g}(y)|=O(|y^{4}|)\label{eq:hij}
\end{eqnarray}
\begin{align}
g_{q}^{ij}(y)= & \delta^{ij}+\frac{1}{3}\bar{R}_{ikjl}y_{k}y_{l}+R_{ninj}y_{n}^{2}\label{eq:gij}\\
 & +\frac{1}{6}\bar{R}_{ikjl,m}y_{k}y_{l}y_{m}+R_{ninj,k}y_{n}^{2}y_{k}+\frac{1}{3}R_{ninj,n}y_{n}^{3}\nonumber \\
 & +\left(\frac{1}{20}\bar{R}_{ikjl,mp}+\frac{1}{15}\bar{R}_{iksl}\bar{R}_{jmsp}\right)y_{k}y_{l}y_{m}y_{p}\nonumber \\
 & +\left(\frac{1}{2}R_{ninj,kl}+\frac{1}{3}\text{Sym}_{ij}(\bar{R}_{iksl}R_{nsnj})\right)y_{n}^{2}y_{k}y_{l}\nonumber \\
 & +\frac{1}{3}R_{ninj,nk}y_{n}^{3}y_{k}+\frac{1}{12}\left(R_{ninj,nn}+8R_{nins}R_{nsnj}\right)y_{n}^{4}+O(|y|^{5})\nonumber 
\end{align}
\begin{equation}
\bar{R}_{g_{q}}(y)=O(|y|^{2})\text{ and }\partial_{ii}^{2}\bar{R}_{g_{q}}=-\frac{1}{6}|\bar{W}|^{2}\label{eq:Rii}
\end{equation}
\begin{equation}
\partial_{tt}^{2}\bar{R}_{g_{q}}=-2R_{ninj}^{2}-2R_{ninj,ij}\label{eq:Rtt}
\end{equation}
\begin{equation}
\bar{R}_{kl}=R_{nn}=R_{nk}=R_{nn,kk}=0\label{eq:Ricci}
\end{equation}
\begin{equation}
R_{nn,nn}=-2R_{nins}^{2}.\label{eq:Rnnnn}
\end{equation}
All the quantities above are calculate in $q\in\partial M$, unless
otherwise specified.
\end{rem}
We set ${\displaystyle U(y):=\frac{1}{\left[(1+y_{n})^{2}+|\bar{y}|^{2}\right]^{\frac{n-2}{2}}}}$
to be the standard bubble. The function $U$ solves the problem 
\begin{equation}
\left\{ \begin{array}{cc}
\Delta U=0 & \text{ in }\mathbb{R}_{+}^{n}\\
\frac{\partial U}{\partial y_{n}}+(n-2)U^{\frac{n}{n-2}}=0 & \text{ on }\partial\mathbb{R}_{+}^{n}
\end{array}\right..\label{eq:ProbBubble}
\end{equation}
\begin{rem}
Let $f:\mathbb{R}\times\mathbb{R}^{+}\rightarrow\mathbb{R}$ be a
smooth integrable function and fix a $c\ge0$. We have the following
integral identities
\begin{equation}
R_{ninj}\int_{\partial\mathbb{R}_{+}^{n}}f(|\bar{y}|,c)y_{i}y_{j}d\bar{y}=0\label{eq:Sym1}
\end{equation}
\begin{equation}
\bar{R}_{t\tau sp}\int_{\mathbb{\partial R}_{+}^{n}}f(|\bar{y}|,c)y_{t}y_{\tau}y_{s}y_{p}d\bar{y}=0\label{eq:Sym2}
\end{equation}
\begin{equation}
R_{ninj}\bar{R}_{t\tau sp}\int_{\partial\mathbb{R}_{+}^{n}}f(|\bar{y}|,c)y_{i}y_{j}y_{t}y_{\tau}y_{s}y_{p}d\bar{y}=0\label{eq:Sym3}
\end{equation}

\begin{equation}
\bar{R}_{ijkl}\bar{R}_{t\tau sp}\int_{\mathbb{\partial R}_{+}^{n}}f(|\bar{y}|,c)y_{i}y_{j}y_{k}y_{l}y_{t}y_{\tau}y_{s}y_{p}d\bar{y}=0\label{eq:Sym4}
\end{equation}
\begin{align}
R_{ninj}R_{nknl}\int_{\partial\mathbb{R}_{+}^{n}}f(|\bar{y}|,c)y_{i}y_{j}y_{k}y_{l}d\bar{y} & =\frac{2}{3}R_{ninj}^{2}\int_{\partial\mathbb{R}_{+}^{n}}f(|\bar{y}|,c)y_{1}^{4}d\bar{y}\label{eq:Sym5}\\
 & =\frac{2}{n^{2}-1}R_{ninj}^{2}\int_{\partial\mathbb{R}_{+}^{n}}f(|\bar{y}|,c)|\bar{y}|^{4}d\bar{y}\nonumber 
\end{align}
 
\end{rem}
\begin{proof}
The first two identities follows by the symmetries of the curvature
tensor. For the last formula we have, again by symmetry,
\begin{align*}
R_{ninj}R_{nknl}\int_{\partial\mathbb{R}_{+}^{n}}f(|\bar{y}|,c)y_{i}y_{j}y_{k}y_{l}d\bar{y} & =2R_{ninj}^{2}\int_{\partial\mathbb{R}_{+}^{n}}f(|\bar{y}|,c)y_{i}^{2}y_{j}^{2}d\bar{y}\\
 & =2R_{ninj}^{2}\int_{\partial\mathbb{R}_{+}^{n}}f(|\bar{y}|,c)y_{1}^{2}y_{2}^{2}d\bar{y},
\end{align*}
and we can conclude by the elementary identities
\[
3\int_{\mathbb{R}^{2}}f(x^{2}+y^{2})x^{2}y^{2}dxdy=\int_{\mathbb{R}^{2}}f(x^{2}+y^{2})x^{4}dxdy
\]
and 
\[
\int_{\partial\mathbb{R}_{+}^{n}}f(|\bar{y}|,c)\bar{y}_{1}^{4}d\bar{y}=\frac{3}{n^{2}-1}\int_{\partial\mathbb{R}_{+}^{n}}f(|\bar{y}|,c)|\bar{y}|^{4}d\bar{y}.
\]
\end{proof}
\begin{rem}
\label{rem:Iam}We collect here some result contained in \cite[Lemma 9.4]{Al}
and in \cite[Lemma 9.5]{Al}. The proof is by direct computation.
For $m>k+1$
\begin{align}
\int_{0}^{\infty}\frac{t^{k}dt}{(1+t)^{m}} & =\frac{k!}{(m-1)(m-2)\cdots(m-1-k)}\label{eq:t-integrali}\\
\int_{0}^{\infty}\frac{dt}{(1+t)^{m}} & =\frac{1}{m-1}\nonumber 
\end{align}
Moreover, set, for $\alpha,m\in\mathbb{N}$, 
\[
I_{m}^{\alpha}:=\int_{0}^{\infty}\frac{s^{\alpha}ds}{\left(1+s^{2}\right)^{m}}
\]
it holds
\begin{align}
I_{m}^{\alpha}=\frac{2m}{\alpha+1}I_{m+1}^{\alpha+2} & \text{ for }\alpha+1<2m\label{eq:Iam}\\
I_{m}^{\alpha}=\frac{2m}{2m-\alpha-1}I_{m+1}^{\alpha} & \text{ for }\alpha+1<2m\nonumber \\
I_{m}^{\alpha}=\frac{2m-\alpha-3}{\alpha+1}I_{m}^{\alpha+2} & \text{ for }\alpha+3<2m.\nonumber 
\end{align}
\end{rem}

\subsection{\label{subsec:KMS}Blow up points and the Khuri-Marques-Schoen scheme}

By the conformal invariance property of the operators $L_{g}$ and
$B_{g}$ it is more convenient to deal with the conformally invariant
family of problems

\begin{equation}
\left\{ \begin{array}{cc}
L_{g_{i}}u=0 & \text{ in }M\\
B_{g_{i}}u+(n-2)f_{i}^{-\tau_{i}}u^{p_{i}}=0 & \text{ on }\partial M
\end{array}\right..\label{eq:Prob-i}
\end{equation}
where $p_{i}\in\left[\bar{p},\frac{n}{n-2}\right]$ for some fixed
$\bar{p}>1$, $\tau_{i}=\frac{n}{n-2}-p_{i}$, $f_{i}\rightarrow f$
in $C_{\text{loc}}^{1}$ for some positive function $f$ and $g_{i}\rightarrow g_{0}$
in the $C_{\text{loc}}^{3}$ topology.

First, we collect the definition of various type of blow up points.
\begin{defn}
\label{def:blowup}We say that $x_{0}\in\partial M$ is a blow up
point for the sequence $u_{i}$ of solutions of (\ref{eq:Prob-i})
if there is a sequence $x_{i}\in\partial M$ such that 
\begin{enumerate}
\item $x_{i}\rightarrow x_{0}$;
\item $x_{i}$ is a local maximum point of $\left.u_{i}\right|_{\partial M}$
;
\item $u_{i}(x_{i})\rightarrow+\infty.$
\end{enumerate}
Shortly we say that $x_{i}\rightarrow x_{0}$ is a blow up point for
$\left\{ u_{i}\right\} _{i}$. 

We say that $x_{i}\rightarrow x_{0}$ is an \emph{isolated} blow up
point for $\left\{ u_{i}\right\} _{i}$ if $x_{i}\rightarrow x_{0}$
is a blow up point for $\left\{ u_{i}\right\} _{i}$ and there exist
two constants $\rho,C>0$ such that
\[
u_{i}(x)\le Cd_{\bar{g}}(x,x_{i})^{-\frac{1}{p_{i-1}}}\text{ for all }x\in\partial M\smallsetminus\left\{ x_{i}\right\} ,\ d_{\bar{g}}(x,x_{i})<\rho.
\]
Here $\bar{g}$ denotes the metric on the boundary induced by $g$
and $d_{\bar{g}}(\cdot,\cdot)$ is the geodesic distance on the boundary
between two points.

Finally, given $x_{i}\rightarrow x_{0}$ an isolated blow up point
for $\left\{ u_{i}\right\} _{i}$, and given $\psi_{i}:B_{\rho}^{+}(0)\rightarrow M$
the Fermi coordinates centered at $x_{i}$, we define the spherical
average of $u_{i}$ as
\[
\bar{u}_{i}(r)=\frac{2}{\omega_{n-1}r^{n-1}}\int_{\partial^{+}B_{r}^{+}}u_{i}\circ\psi_{i}d\sigma_{r}
\]
and
\[
w_{i}(r):=r^{-\frac{1}{p_{i}-1}}\bar{u}_{i}(r)
\]
for $0<r<\rho.$

We say that $x_{i}\rightarrow x_{0}$ is an \emph{isolated simple}
blow up point for $\left\{ u_{i}\right\} _{i}$ solutions of (\ref{eq:Prob-i})
if $x_{i}\rightarrow x_{0}$ is an isolated blow up point for $\left\{ u_{i}\right\} _{i}$
and there exists $\rho$ such that $w_{i}$ has exactly one critical
point in the interval $(0,\rho)$. 
\end{defn}
It is possible to prove the following proposition (see, for example
\cite{Al,FA,GM,KMS})
\begin{prop}
Let $x_{i}\rightarrow x_{0}$ is an isolated blow up point for $\left\{ u_{i}\right\} _{i}$
and $\rho$ as in Definition \ref{def:blowup}. We set 
\[
v_{i}(y)=M_{i}^{-1}(u_{i}\circ\psi_{i})(M_{i}^{1-p_{i}}y),\text{ for }y\in B_{\rho M_{i}^{p_{i}-1}}^{+}(0),\text{ where }M_{i}:=u_{i}(x_{i})
\]
 Then, given $R_{i}\rightarrow\infty$ and $\beta_{i}\rightarrow0$,
up to subsequences, we have 
\[
|v_{i}-U|_{C^{2}\left(B_{R_{i}}^{+}(0)\right)}<\beta_{i}\text{ and }\lim_{i\rightarrow\infty}p_{i}=\frac{n}{n-2}.
\]

Furthermore, if $x_{i}\rightarrow x_{0}$ is an isolated simple blow
up point for $\left\{ u_{i}\right\} _{i}$, then there exist $C,\rho>0$
such that
\begin{enumerate}
\item $M_{i}u_{i}(\psi_{i}(y))\le C|y|^{2-n}$ for all $y\in B_{\rho}^{+}(0)\smallsetminus\left\{ 0\right\} $; 
\item $M_{i}u_{i}(\psi_{i}(y))\ge C^{-1}G_{i}(y)$ for all $y\in B_{\rho}^{+}(0)\smallsetminus B_{r_{i}}^{+}(0)$
where $r_{i}:=R_{i}M_{i}^{1-p_{i}}$ and $G_{i}$ is the Green's function
which solves 
\[
\left\{ \begin{array}{ccc}
L_{g_{i}}G_{i}=0 &  & \text{in }B_{\rho}^{+}(0)\smallsetminus\left\{ 0\right\} \\
G_{i}=0 &  & \text{on }\partial^{+}B_{\rho}^{+}(0)\\
B_{g_{i}}G_{i}=0 &  & \text{on }\partial'B_{\rho}^{+}(0)\smallsetminus\left\{ 0\right\} 
\end{array}\right.
\]
and $|y|^{n-2}G_{i}(y)\rightarrow1$ as $|y|\rightarrow0$. 
\end{enumerate}
\end{prop}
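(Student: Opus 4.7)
The plan is to combine a standard rescaling/blow-up analysis with a comparison argument against the Green's function, all of which follow the by-now classical scheme of \cite{KMS,Al,FA,GM}.

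First I would establish the convergence $v_i \to U$ together with $p_i \to \tfrac{n}{n-2}$. By the conformal invariance of $L_{g}$ and $B_{g}$, and using the Fermi conformal coordinates $\psi_i$ centered at $x_i$, the rescaled function $v_i$ satisfies
\[
L_{\tilde g_i} v_i = 0 \text{ in } B_{\rho M_i^{p_i-1}}^+, \qquad B_{\tilde g_i} v_i + (n-2) f_i(\psi_i(M_i^{1-p_i} \cdot))^{-\tau_i} v_i^{p_i} = 0 \text{ on } \partial' B_{\rho M_i^{p_i-1}}^+,
\]
where $\tilde g_i(y) = g_i(M_i^{1-p_i} y)$. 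On every fixed ball $B_R^+$ the metrics $\tilde g_i$ converge to the Euclidean metric in $C^3$, and the isolated blow-up decay $u_i(x) \le C d_{\bar g}(x,x_i)^{-1/(p_i-1)}$ transfers to a uniform $L^\infty$ bound for $v_i$ on $B_R^+$. Since $v_i(0)=1$ and $0$ is a boundary local maximum, standard boundary Schauder estimates for the linear interior equation with the (subcritical, thanks to $p_i \le n/(n-2)$) nonlinear Neumann condition give $C^{2,\alpha}_{\mathrm{loc}}$ precompactness. Any limit $v_\infty$ is a positive solution on $\mathbb{R}_+^n$ of the limit problem with exponent $p_\infty = \lim p_i$, and the Chipot--Fila--Shafrir / Li--Zhu classification of positive solutions with a boundary maximum forces both $p_\infty = \tfrac{n}{n-2}$ and $v_\infty = U$; the normalization is guaranteed by the conformal choice $f_i(x_i) \to 1$ in the conformal Fermi setup.

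For the upper estimate $M_i u_i(\psi_i(y)) \le C|y|^{2-n}$, the isolated simple condition enters decisively. Because $w_i$ has a unique critical point $r_{*,i}$ on $(0,\rho)$, the first-step convergence $v_i \to U$ pins $r_{*,i}$ to the scale $M_i^{1-p_i}$, so that $w_i$ is strictly decreasing on $(r_{*,i},\rho)$. Writing the equation satisfied by the spherical average $\bar u_i$ and using Harnack's inequality on dyadic annuli (the boundary nonlinearity becomes subcritical in the rescaled picture, thanks to the decay $\bar u_i \to 0$ away from $x_i$), one converts the monotonicity of $w_i$ into the pointwise bound $u_i(\psi_i(y)) \le C M_i^{-1} |y|^{2-n}$ for $r_i \le |y| \le \rho$; in the inner region $|y| \le r_i$ the bound is the direct consequence of the $C^2$-convergence to $U$. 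This is exactly the strategy carried out in \cite[Prop.~5.2]{Al} or \cite[Prop.~3.4]{KMS} and carries over verbatim once the bubble convergence is in place.

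For the lower bound one uses a maximum principle comparison on the annulus $A_i = B_\rho^+ \setminus B_{r_i}^+$. On $\partial^+ B_{r_i}^+$ the $C^2$ convergence $v_i \to U$ yields
\[
M_i u_i(\psi_i(y)) \ge \tfrac12 \, U(M_i^{p_i-1} y) \ge c\, r_i^{2-n} \ge c'\, G_i(y),
\]
using $|y|^{n-2} G_i(y) \to 1$; on $\partial^+ B_\rho^+$ one has $G_i = 0 \le M_i u_i$. Both $M_i u_i$ and $G_i$ are $L_{g_i}$-harmonic in $A_i$, and on $\partial' B_\rho^+ \setminus \{0\}$ one has $B_{g_i} G_i = 0$ while $B_{g_i}(M_i u_i) = -(n-2) f_i^{-\tau_i} M_i^{1-p_i(n-2)/(n-2)}(M_i u_i)^{p_i}$ is of a favorable sign for a Hopf-type argument. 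Since $(M,g)$ is of positive type, $L_{g_i}$ is coercive and the standard comparison argument yields $M_i u_i - c' G_i \ge 0$ on $A_i$, which is the claimed lower bound. The main technical obstacle throughout is the classification of the blow-up limit in the first step (requiring the half-space Liouville theorem for the critical Neumann problem); the upper and lower bound steps, once the bubble profile is known, reduce to standard maximum principle and Harnack arguments.
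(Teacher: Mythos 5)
The paper does not prove this proposition itself -- it only cites \cite{Al,FA,GM,KMS} -- and your sketch follows precisely the standard rescaling/Liouville/Harnack/Green's-function-comparison scheme of those references, so the approach is essentially the same. The only caveat is that several steps you compress (e.g., passing from the monotonicity of $w_i$ to the sharp $|y|^{2-n}$ decay requires the usual two-stage bootstrap via a preliminary $|y|^{2-n+\varepsilon}$ estimate and a Green's representation, and the subcritical case $p_\infty<\tfrac{n}{n-2}$ must be ruled out by a separate nonexistence result) would need to be spelled out for a full proof, but these are exactly the details contained in the cited works.
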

The usual strategy to prove compactness of solutions of Yamabe problems
dates back to the seminal Khuri Marques and Schoen paper \cite{KMS}.
Their idea is to prove firstly that only isolated simple blow up points
may occur, then, to give a precise description of the asymptotic profile
of a rescaled solution around an isolated simple blow up points. Finally
they rule out also the possibility of having isolated simple blow
up points.

The key tool to accomplish these steps is a sign estimates of a Pohozaev
type formula for a blowing up sequence of solutions that we recall
here.
\begin{thm}[Pohozaev Identity]
\label{thm:poho} Let $u$ a $C^{2}$-solution of the following problem
\[
\left\{ \begin{array}{cc}
L_{g}u=0 & \text{ in }B_{r}^{+}\\
B_{g}u+(n-2)f^{-\tau}u^{p}=0 & \text{ on }\partial'B_{r}^{+}
\end{array}\right.
\]
for $B_{r}^{+}=\psi_{q}^{-1}(B_{g}^{+}(q,r))$ for $q\in\partial M$,
with $\tau=\frac{n}{n-2}-p>0$. Let us define 
\[
\bar{P}(u,r):=\int\limits _{\partial^{+}B_{r}^{+}}\left(\frac{n-2}{2}u\frac{\partial u}{\partial r}-\frac{r}{2}|\nabla u|^{2}+r\left|\frac{\partial u}{\partial r}\right|^{2}\right)d\sigma_{r}+\frac{r(n-2)}{p+1}\int\limits _{\partial(\partial'B_{r}^{+})}f^{-\tau}u^{p+1}d\bar{\sigma}_{g}
\]
and
\begin{multline*}
P(u,r)=-\int\limits _{B_{r}^{+}}\left(y^{a}\partial_{a}u+\frac{n-2}{2}u\right)[(L_{g}-\Delta)u]dy+\frac{n-2}{2}\int\limits _{\partial'B_{r}^{+}}\left(\bar{y}^{k}\partial_{k}u+\frac{n-2}{2}u\right)h_{g}ud\bar{y}\\
-\frac{\tau(n-2)}{p+1}\int\limits _{\partial'B_{r}^{+}}\left(\bar{y}^{k}\partial_{k}f\right)f^{-\tau-1}u^{p+1}d\bar{y}+\left(\frac{n-1}{p+1}-\frac{n-2}{2}\right)\int\limits _{\partial'B_{r}^{+}}(n-2)f^{-\tau}u^{p+1}d\bar{y}.
\end{multline*}
Then 
\[
\bar{P}(u,r)=P(u,r)
\]
\end{thm}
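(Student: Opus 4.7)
The identity is a classical Pohozaev-type relation adapted to the half-ball and to the conformal Laplacian with nonlinear Neumann boundary condition. The plan is to multiply the equation $L_g u=0$ by the conformal Pohozaev multiplier $y^{a}\partial_{a}u+\tfrac{n-2}{2}u$, integrate over $B_{r}^{+}$, and book-keep the resulting boundary contributions on the two portions $\partial^{+}B_{r}^{+}$ and $\partial'B_{r}^{+}$ of $\partial B_{r}^{+}$, using on the latter the nonlinear boundary condition $B_{g}u+(n-2)f^{-\tau}u^{p}=0$.

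Concretely, I would first write $L_g u=\Delta u+(L_g-\Delta)u=0$ to isolate the Euclidean Laplacian, so that
\[
\int_{B_{r}^{+}}\!\Bigl(y^{a}\partial_{a}u+\tfrac{n-2}{2}u\Bigr)\Delta u\,dy=-\!\int_{B_{r}^{+}}\!\Bigl(y^{a}\partial_{a}u+\tfrac{n-2}{2}u\Bigr)(L_g-\Delta)u\,dy,
\]
which directly produces the first term of $P(u,r)$. For the left-hand side I apply the standard Rellich/Pohozaev identity
\[
\int_{B_{r}^{+}}\!\Bigl(y^{a}\partial_{a}u+\tfrac{n-2}{2}u\Bigr)\Delta u\,dy=\int_{\partial B_{r}^{+}}\!\Bigl(y^{a}\partial_{a}u+\tfrac{n-2}{2}u\Bigr)\partial_{\tilde\nu}u\,d\sigma-\tfrac{1}{2}\int_{\partial B_{r}^{+}}y^{a}\tilde\nu_{a}|\nabla u|^{2}d\sigma,
\]
with $\tilde\nu$ the outward normal to the half-ball. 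On $\partial^{+}B_{r}^{+}$ one has $\tilde\nu=y/r$, $y^{a}\partial_{a}u=r\,\partial_{r}u$ and $y^{a}\tilde\nu_{a}=r$, so this part contributes exactly the three terms $\tfrac{n-2}{2}u\,\partial_{r}u-\tfrac{r}{2}|\nabla u|^{2}+r|\partial_{r}u|^{2}$ of $\bar{P}(u,r)$.

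On $\partial'B_{r}^{+}$ one has $\tilde\nu=-e_{n}$, $y^{a}\tilde\nu_{a}=0$, and the Fermi-coordinate relation $\partial_{\tilde\nu}=\partial_\nu$ allows one to substitute $\partial_{\nu}u=-\tfrac{n-2}{2}h_{g}u+(n-2)f^{-\tau}u^{p}$ from the boundary condition. This substitution instantly produces the mean-curvature term $\tfrac{n-2}{2}\int_{\partial'B_{r}^{+}}(\bar y^{k}\partial_{k}u+\tfrac{n-2}{2}u)h_{g}u\,d\bar y$ of $P(u,r)$, plus the nonlinear integrand $-(n-2)\int_{\partial'B_{r}^{+}}(\bar y^{k}\partial_{k}u+\tfrac{n-2}{2}u)f^{-\tau}u^{p}\,d\bar y$ which still needs to be reshaped. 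The key manipulation here is to write $u^{p}\bar y^{k}\partial_{k}u=\tfrac{1}{p+1}\bar y^{k}\partial_{k}(u^{p+1})$ and integrate by parts on the $(n-1)$-dimensional domain $\partial'B_{r}^{+}$; the divergence term $\partial_{k}(\bar y^{k}f^{-\tau})=(n-1)f^{-\tau}-\tau \bar y^{k}f^{-\tau-1}\partial_{k}f$ gives the coefficient $\tfrac{n-1}{p+1}-\tfrac{n-2}{2}$ in front of $(n-2)\int f^{-\tau}u^{p+1}$ and the $\tau$-weighted $\partial_{k}f$ term, while the boundary contribution at $\partial(\partial'B_{r}^{+})$ (the equator $\{|y|=r,\,y_{n}=0\}$, where the outward conormal is $\bar y/r$) produces $-\tfrac{r(n-2)}{p+1}\int f^{-\tau}u^{p+1}d\bar\sigma_{g}$. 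Moving this last term to the left-hand side exactly converts $\bar P_{0}$ into $\bar P(u,r)$.

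There is no genuine obstacle here beyond careful sign and coefficient tracking; the delicate points are (i) that in Fermi coordinates the outward normal to $\partial M$ is $-e_{n}$, so the sign in the boundary condition has to be handled correctly when converting $\partial_{\tilde\nu}u$ to $\partial_{\nu}u$, and (ii) that the boundary-of-boundary integration by parts on the flat face $\partial'B_{r}^{+}$ must be performed with respect to $f^{-\tau}$ in order to capture both the $\partial_{k}f$ term and the codimension-two term on $\partial(\partial'B_{r}^{+})$. Collecting all contributions yields $\bar P(u,r)=P(u,r)$.
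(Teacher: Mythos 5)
Your proposal is correct and is precisely the standard derivation of the half-ball Pohozaev identity used in this literature (it matches the approach of Almaraz and Felli--Ould Ahmedou, which the paper implicitly invokes rather than reproving): split $L_g=\Delta+(L_g-\Delta)$, apply the Rellich--Pohozaev identity on $B_r^+$, substitute the Neumann condition on $\partial'B_r^+$ using $\partial_{\tilde\nu}=\partial_\nu$ there and $y^a\tilde\nu_a=0$, and finish with a tangential integration by parts of $\bar y^k\partial_k(u^{p+1})f^{-\tau}$ that produces the $\partial_k f$ term, the coefficient $\frac{n-1}{p+1}-\frac{n-2}{2}$, and the equatorial term with conormal $\bar y/r$. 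The sign and coefficient bookkeeping you describe is exactly right, so the argument is complete.
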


\subsection{\label{subsec:gamma}A sharp approximation of blow up points}

To describe the asymptotic profile of a rescaled solution around an
isolated simple blow up point in the case of manifolds with umbilic
boundary we introduce the function $\gamma_{q}=\gamma$ which solves
\begin{equation}
\left\{ \begin{array}{ccc}
-\Delta\gamma=\left[\frac{1}{3}\bar{R}_{ikjl}(q)y_{k}y_{l}+R_{ninj}(q)y_{n}^{2}\right]\partial_{ij}^{2}U &  & \text{on }\mathbb{R}_{+}^{n}\\
\frac{\partial\gamma}{\partial y_{n}}=-nU^{\frac{2}{n-2}}\gamma &  & \text{on }\partial\mathbb{R}_{+}^{n}
\end{array}\right..\label{eq:vqdef}
\end{equation}
In \cite{GMP} and in \cite{GM} the authors prove the following lemma.
\begin{lem}
\label{lem:vq}Assume $n\ge5$. Given a point $q\in\partial M$, there
exists a solution $\gamma:\mathbb{R}_{+}^{n}\rightarrow\mathbb{R}$
of the linear problem (\ref{eq:vqdef}).

In addition it holds
\begin{equation}
|\nabla^{\tau}\gamma(y)|\le C(1+|y|)^{4-\tau-n}\text{ for }\tau=0,1,2;\label{eq:gradvq}
\end{equation}
\begin{equation}
\int_{\mathbb{R}_{+}^{n}}\gamma\Delta\gamma dy\le0;\label{new}
\end{equation}

\begin{equation}
\int_{\partial\mathbb{R}_{+}^{n}}U^{\frac{n}{n-2}}(t,z)\gamma(t,z)dz=0;\label{eq:Uvq}
\end{equation}
\begin{equation}
\gamma(0)=\frac{\partial\gamma}{\partial y_{1}}(0)=\dots=\frac{\partial\gamma}{\partial y_{n-1}}(0)=0.\label{eq:dervq}
\end{equation}
\end{lem}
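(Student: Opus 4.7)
My plan is to solve \eqref{eq:vqdef} by a Fredholm-alternative argument on a suitable function space on $\mathbb{R}_+^n$, and then to deduce the auxiliary properties from the variational structure together with a barrier estimate. Consider the bilinear form
\[
Q(v,w):=\int_{\mathbb{R}_+^n}\nabla v\cdot\nabla w\,dy-n\int_{\partial\mathbb{R}_+^n}U^{2/(n-2)}\,v w\,dz,
\]
defined on (a weighted variant of) $\mathcal{D}^{1,2}(\mathbb{R}_+^n)$. The associated operator is known to have an $n$-dimensional kernel
\[
K=\mathrm{span}\{\partial_{1}U,\dots,\partial_{n-1}U,\,Z_{0}\},\qquad Z_{0}:=\tfrac{n-2}{2}U+y\cdot\nabla U,
\]
(corresponding to tangential translations and the dilation of the bubble), and $Q$ is non-negative on $K^{\perp}$. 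Writing $f(y)$ for the right-hand side of \eqref{eq:vqdef}, the Fredholm alternative reduces the existence of a solution $\gamma\in K^{\perp}$ to the $L^{2}$-orthogonality $\int_{\mathbb{R}_+^n}fk\,dy=0$ for every $k\in K$.

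Verifying these orthogonalities is the main content. The orthogonality to each translation $\partial_{k}U$ is essentially a parity argument: $\partial_{ij}^{2}U$ has definite parity in each tangential variable, so after using the algebraic symmetries of $\bar R_{ikjl}$ (in particular the first Bianchi identity), the integrand $f\cdot\partial_{k}U$ is tangentially odd and vanishes on $\mathbb{R}_+^n$. The orthogonality to $Z_{0}$ is the delicate point: integrating by parts reduces $\int fZ_{0}$ to curvature contractions that cancel thanks to \eqref{eq:Ricci}--\eqref{eq:Rnnnn} and the umbilicity of $\partial M$. In my view this is the \emph{main technical obstacle} of the lemma, since it forces one to bring simultaneously to bear umbilicity, the conformal Fermi normalization, and the M\"obius symmetries of the bubble.

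Once a solution $\gamma_{0}\in K^{\perp}$ is obtained, the $n$-parameter family $\gamma_{0}+K$ is pinned down by adjusting the additive kernel element so as to enforce \eqref{eq:Uvq} and \eqref{eq:dervq}; the non-degeneracy of the corresponding pairing matrix follows from the identities $\partial_{i}(\partial_{j}U)(0)=-(n-2)\delta_{ij}$, $\partial_{i}Z_{0}(0)=0$, $(\partial_{k}U)(0)=0$, $Z_{0}(0)=\tfrac{n-2}{2}$, together with an explicit calculation showing $\int_{\partial\mathbb{R}_+^n}U^{n/(n-2)}Z_{0}\,dz\ne 0$. The compatibility of the $(n+1)$-fold system of normalizations with an $n$-dimensional kernel is encoded in a single scalar identity on $f$ that reflects the quadratic vanishing of $f$ at the origin and the conformal Fermi gauge.

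For the pointwise decay \eqref{eq:gradvq}, since $|f(y)|\lesssim(1+|y|)^{2-n}$, the function $\phi(y):=C(1+|y|)^{4-n}$ is, for $C$ large, a supersolution for the linearized problem that is compatible with the Robin boundary condition; a maximum-principle argument yields the $\tau=0$ bound, and standard interior and boundary Schauder estimates on dyadic half-annuli $\{|y|\sim 2^{k}\}\cap\mathbb{R}_+^n$ then give $\tau=1,2$. Finally, \eqref{new} is immediate from the variational set-up: testing \eqref{eq:vqdef} against $\gamma$ itself and integrating by parts, with the boundary condition, gives
\[
-\int_{\mathbb{R}_+^n}\gamma\Delta\gamma\,dy=\int_{\mathbb{R}_+^n}|\nabla\gamma|^{2}\,dy-n\int_{\partial\mathbb{R}_+^n}U^{2/(n-2)}\gamma^{2}\,dz=Q(\gamma,\gamma),
\]
which is non-negative since $\gamma\in K^{\perp}$ by construction.
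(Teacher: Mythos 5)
Your plan---a Fredholm alternative in a weighted $D^{1,2}$ space, verification of the orthogonality of $f$ to the kernel $K$, adjustment by a kernel element to normalize, and a barrier for the decay---is a legitimate alternative to what the paper (following \cite{GMP,GM}) actually does, namely the explicit decomposition $\gamma=\Phi+E$ of Section \ref{sec:characterization}, where $\Phi$ is the polynomial particular solution of Lemmas \ref{lem:Phitilda1} and \ref{lem:Phitilda2} and $E$ is a harmonic correction absorbing the boundary defect. The explicit route delivers the decay and the structural integrals essentially for free; your soft route can work, but two of your concrete claims are false, and the second invalidates your proof of (\ref{new}) as written.

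First, $\int_{\partial\mathbb{R}_+^n}U^{n/(n-2)}Z_0\,d\bar y=0$, not $\neq0$. On $y_n=0$,
\[
Z_0=\tfrac{n-2}{2}(1+|\bar y|^2)^{-\frac{n-2}{2}}-(n-2)|\bar y|^2(1+|\bar y|^2)^{-\frac n2},
\]
and multiplying by $U^{n/(n-2)}=(1+|\bar y|^2)^{-n/2}$ and integrating, the two contributions cancel exactly by (\ref{eq:Iam}). (Conceptually, $Z_0$ generates a one-parameter family of extremals of the trace Sobolev quotient, so it is tangent to the Nehari manifold.) Consequently you cannot impose (\ref{eq:Uvq}) by adding a multiple of $Z_0$, and the ``$(n+1)$ conditions vs.\ $n$ parameters'' bookkeeping does not close the way you describe. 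The correct observation is that (\ref{eq:Uvq}) is automatic for \emph{any} solution of (\ref{eq:vqdef}): testing (\ref{eq:vqdef}) against $U$ and (\ref{eq:ProbBubble}) against $\gamma$ and subtracting gives
\[
-2\int_{\partial\mathbb{R}_+^n}U^{\frac{n}{n-2}}\gamma\,d\bar y=\int_{\mathbb{R}_+^n}\Big[\tfrac13\bar R_{ikjl}y_ky_l+R_{ninj}y_n^2\Big]\partial^2_{ij}U\cdot U\,dy,
\]
and the right-hand side vanishes: the $\delta_{ij}$ contractions produce $\bar R_{kl}$ and $R_{nn}$, which are zero by (\ref{eq:Ricci}), while $\bar R_{ikjl}y_iy_ky_jy_l=0$ by antisymmetry of the curvature tensor in its first pair, and the isotropic integral of $y_iy_j$ against $R_{ninj}$ again gives $R_{nn}=0$. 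You are then left with exactly the $n$ normalizations in (\ref{eq:dervq}) against the $n$-dimensional kernel, and the pairing matrix you describe is diagonal with nonzero entries $-(n-2)\delta_{ij}$ and $\tfrac{n-2}{2}$; that part is fine.

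Second, the argument for (\ref{new}) does not go through as stated. The claim ``$Q\ge0$ on $K^\perp$'' is false for the $D^{1,2}$-orthogonal complement: $U$ itself lies in $K^\perp$ (translation invariance gives $\langle U,\partial_k U\rangle_{D^{1,2}}=0$, and a short computation with $y\cdot\nabla U$ gives $\langle U,Z_0\rangle_{D^{1,2}}=0$), yet
\[
Q(U,U)=\int_{\mathbb{R}_+^n}|\nabla U|^2\,dy-n\int_{\partial\mathbb{R}_+^n}U^{\frac{2(n-1)}{n-2}}\,d\bar y=\big((n-2)-n\big)\int_{\partial\mathbb{R}_+^n}U^{\frac{2(n-1)}{n-2}}\,d\bar y<0.
\]
The non-negativity of $Q$ you need holds on the codimension-one subspace $\{v:\int_\partial U^{n/(n-2)}v=0\}$, the tangent to the Nehari manifold at the minimizer $U$; this is exactly what Lemma \ref{lem:qE} of the paper uses. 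Thus (\ref{new}) follows from (\ref{eq:Uvq}) (which, as above, is automatic), not from $\gamma\in K^\perp$.

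Finally, a smaller caution: with $\nu$ the outward normal, the boundary condition in (\ref{eq:vqdef}) reads $\partial_\nu\gamma=nU^{2/(n-2)}\gamma$, which is the sign that \emph{permits} boundary maxima, so the bare comparison with $\phi=C(1+|y|)^{4-n}$ does not directly give the $\tau=0$ bound. One must either correct the barrier by a positive multiplier adapted to the Robin term, or use a Green's function representation, or read the decay from the explicit $\Phi+E$ decomposition before passing to Schauder on dyadic half-annuli.
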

Let $x_{i}\rightarrow x_{0}$ an isolated simple blow up point for
$u_{i}$ of solutions of (\ref{eq:Prob-i}) . Set 
\[
v_{i}(y):=\delta_{i}^{\frac{1}{p_{i}-1}}u_{i}(\delta_{i}y)\text{ for }y\in B_{\frac{R}{\delta_{i}}}^{+}(0)\text{ where }\delta_{i}:=u_{i}^{1-p_{i}}(x_{i}),
\]
we know that $v_{i}$ satisfies 
\begin{equation}
\left\{ \begin{array}{cc}
L_{\hat{g_{i}}}v_{i}=0 & \text{ in }B_{\frac{R}{\delta_{i}}}^{+}(0)\\
B_{\hat{g_{i}}}v_{i}+(n-2)\hat{f}^{-\tau_{i}}v_{i}^{p_{i}}=0 & \text{ on }\partial B_{\frac{R}{\delta_{i}}}^{+}(0)
\end{array}\right.\label{eq:Prob-hat}
\end{equation}
where $\hat{g}_{i}:=\tilde{g}_{i}(\delta_{i}y)=\Lambda_{x_{i}}^{\frac{4}{n-2}}(\delta_{i}y)g(\delta_{i}y)$,
$\hat{f}_{i}(y)=f_{i}(\delta_{i}y)$, $f_{i}=\Lambda_{x_{i}}f\rightarrow\Lambda_{x_{0}}f$
and $\tau_{i}=\frac{n}{n-2}-p_{i}$.

Using the term $\gamma$ we are able to give a good estimate of the
rescaled solution $v_{i}$ around the isolated blow up point $x_{i}\rightarrow x_{0}$.
Indeed we have (see \cite[Proposition 9]{GM})
\begin{prop}
\label{prop:stimawi}Assume $n\ge6$. Let $\gamma$ be defined in
(\ref{eq:vqdef}). There exist $R,C>0$ such that 
\begin{align*}
|v_{i}(y)-U(y)-\delta_{i}^{2}\gamma_{x_{i}}(y)| & \le C\delta_{i}^{3}(1+|y|)^{5-n}\\
\left|\frac{\partial}{\partial_{j}}\left(v_{i}(y)-U(y)-\delta_{i}^{2}\gamma_{x_{i}}(y)\right)\right| & \le C\delta_{i}^{3}(1+|y|)^{4-n}\\
\left|y_{n}\frac{\partial}{\partial_{n}}\left(v_{i}(y)-U(y)-\delta_{i}^{2}\gamma_{x_{i}}(y)\right)\right| & \le C\delta_{i}^{3}(1+|y|)^{5-n}\\
\left|\frac{\partial^{2}}{\partial_{j}\partial_{k}}\left(v_{i}(y)-U(y)-\delta_{i}^{2}\gamma_{x_{i}}(y)\right)\right| & \le C\delta_{i}^{3}(1+|y|)^{3-n}
\end{align*}
for $|y|\le\frac{R}{2\delta_{i}}$. 
\end{prop}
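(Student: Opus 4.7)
The plan is to set $w_i := v_i - U - \delta_i^2 \gamma_{x_i}$ and show it solves a linear elliptic problem with source terms of size $O(\delta_i^3)$ in a suitable weighted norm; the pointwise $C^0$ bound then follows by a blow-up contradiction argument of Khuri--Marques--Schoen type, and the derivative bounds by rescaled Schauder theory.

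First, I would subtract the equations for $U$ (from \eqref{eq:ProbBubble}), for $\gamma_{x_i}$ (from \eqref{eq:vqdef}), and for $v_i$ (from \eqref{eq:Prob-hat}). Using the expansion \eqref{eq:gij} of $\hat g_i$ evaluated at $\delta_i y$, together with \eqref{eq:hij}--\eqref{eq:Rnnnn}, and Taylor-expanding the nonlinear boundary term $\hat f_i^{-\tau_i}(U+\delta_i^2\gamma+w_i)^{p_i}$ about $U$, one arrives at
\begin{align*}
L_{\hat g_i} w_i &= F_i(y) \quad \text{in } B^+_{R/\delta_i}(0),\\
B_{\hat g_i} w_i + n U^{\frac{2}{n-2}} w_i &= G_i(y) \quad \text{on } \partial' B^+_{R/\delta_i}(0).
\end{align*}
The crucial feature is that \eqref{eq:vqdef} kills precisely the $O(\delta_i^2)$ contribution coming from the leading curvature term of $(\Delta_{\hat g_i}-\Delta)U$ and from the linearisation of the boundary nonlinearity at $U$. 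What remains in $F_i$ is built from the cubic-and-higher metric terms of \eqref{eq:gij} applied to $U$ and to $\delta_i^2\gamma$, together with scalar-curvature contributions; using $|\partial^\tau U|\le C(1+|y|)^{2-n-\tau}$ and the decay \eqref{eq:gradvq} of $\gamma$ from Lemma \ref{lem:vq}, one obtains $|F_i(y)|\le C\delta_i^3(1+|y|)^{3-n}$. A parallel expansion on the boundary, using \eqref{eq:hij} for $h_{\hat g_i}$ and controlling the subcritical correction $\tau_i$ via the isolated simple blow-up profile, gives $|G_i(y)|\le C\delta_i^3(1+|y|)^{2-n}$.

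Next comes the contradiction step. Assume that along a subsequence
\[
\Lambda_i := \sup_{|y|\le R/(2\delta_i)} \delta_i^{-3}(1+|y|)^{n-5}|w_i(y)| \longrightarrow +\infty,
\]
realised at points $y_i$, and set $W_i := w_i/(\Lambda_i \delta_i^3)$. Dividing by $\Lambda_i$ makes the sources negligible, and standard interior and boundary Schauder estimates produce a nontrivial $C^2_{\mathrm{loc}}$ limit $W_\infty$. If $|y_i|$ stays bounded, $W_\infty$ solves the homogeneous linearised problem
\[
\Delta W_\infty = 0 \text{ in } \mathbb{R}^n_+, \qquad \frac{\partial W_\infty}{\partial y_n} + n U^{\frac{2}{n-2}} W_\infty = 0 \text{ on } \partial\mathbb{R}^n_+,
\]
with decay $|W_\infty(y)|\le (1+|y|)^{5-n}$. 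By Almaraz's classification of bounded solutions of this linearised problem, $W_\infty$ must be a linear combination of the translation kernels $\partial_j U$ ($j<n$) and of the dilation kernel $\frac{n-2}{2}U + y^a\partial_a U$; the translation coefficients are killed by $\nabla_{\bar y}v_i(0)=0$ (since $x_i$ is a local maximum of $u_i|_{\partial M}$), and the dilation coefficient by the normalisation $v_i(0)=1$, forcing $W_\infty\equiv 0$ and contradicting the nontriviality of the limit. The case $|y_i|\to\infty$ is handled by a secondary rescaling producing a harmonic half-space limit with homogeneous Neumann condition and polynomial decay, excluded by a Liouville-type argument.

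Finally, the derivative bounds follow from the $C^0$ estimate by rescaling on balls of radius $\sim(1+|y|)$, applying interior and boundary Schauder estimates, and translating back; the extra factor $y_n$ in the normal-derivative bound reflects the one-sided nature of the boundary condition. I expect the main obstacle to be the delicate bookkeeping of the error terms in $F_i$ and $G_i$, especially verifying that all contributions arising from the higher-order metric expansion \eqref{eq:gij} and from the subcritical correction $\tau_i U^{n/(n-2)}\log U$ are indeed controlled by the sharp weight $(1+|y|)^{5-n}$.
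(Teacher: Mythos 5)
Your overall strategy is the standard Khuri--Marques--Schoen blow-up contradiction scheme, which is exactly what the cited reference \cite[Proposition~9]{GM} uses (note that the present paper does not reprove this statement; it cites \cite{GM}). Setting $w_i=v_i-U-\delta_i^2\gamma_{x_i}$, observing that (\ref{eq:vqdef}) cancels the $O(\delta_i^2)$ part of $(L_{\hat g_i}-\Delta)U$ and of the linearised boundary term, estimating the residual sources via (\ref{eq:gij})--(\ref{eq:Rnnnn}) and (\ref{eq:gradvq}), and then running the rescaled contradiction argument with Almaraz's kernel classification, killed by $w_i(0)=0$ and $\nabla_{\bar y}w_i(0)=0$ (which rely on $v_i(0)=1$, $\nabla_{\bar y}v_i(0)=0$ and on (\ref{eq:dervq})), is precisely the right route.

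Two small remarks. First, your claimed boundary source bound $|G_i(y)|\le C\delta_i^3(1+|y|)^{2-n}$ is too strong and, taken literally, false: the dominant contributions are the quadratic Taylor remainder $\sim\delta_i^4\gamma^2 U^{(4-n)/(n-2)}\sim\delta_i^4(1+|y|)^{4-n}$ and the mean-curvature term $\sim\delta_i^5|y|^4(1+|y|)^{2-n}$, and $\delta_i^4(1+|y|)^{4-n}$ is \emph{not} bounded by $\delta_i^3(1+|y|)^{2-n}$ once $|y|\gtrsim\delta_i^{-1/2}$. The correct claim is $|G_i|\le C\delta_i^3(1+|y|)^{4-n}$ (or better), which still feeds through the half-space Green's representation to yield the weight $(1+|y|)^{5-n}$, so the slip is harmless for the argument but should be fixed. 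Second, for $n=6,7,8$ the limiting function $W_\infty$ with decay $(1+|y|)^{5-n}$ is not in $D^{1,2}(\mathbb{R}^n_+)$, so one must invoke the classification for \emph{bounded} solutions of the linearised half-space problem (as in Almaraz) rather than the $D^{1,2}$ nondegeneracy statement; you do cite the bounded version, but it is worth stating explicitly that this is where the slow decay is absorbed.
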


\section{\label{sec:characterization}A characterization of function $\gamma$}

In this section we give a an accurate description of a solution $\gamma$
of (\ref{eq:vqdef}), similarly to \cite{KMW}. First we split
\[
\gamma=\Phi+E
\]
where $\Phi=\tilde{\Phi}_{1}+\tilde{\Phi}_{2}$ is a polynomial function
and $\tilde{\Phi}_{1},\tilde{\Phi}_{2}$ solve, respectively
\begin{align}
-\Delta\tilde{\Phi}_{1}=R_{ninj}(q)y_{n}^{2}\partial_{ij}^{2}U & \text{on }\mathbb{R}_{+}^{n}\label{eq:Phidef1}\\
-\Delta\tilde{\Phi}_{2}=\frac{1}{3}\bar{R}_{ijkl}(q)y_{k}y_{l}\partial_{ij}^{2}U & \text{on }\mathbb{R}_{+}^{n}\label{eq:Phidef2}
\end{align}
while $E$ is an harmonic function solving 
\begin{equation}
\left\{ \begin{array}{ccc}
-\Delta E=0 &  & \text{on }\mathbb{R}_{+}^{n}\\
\lim\limits _{y_{n}\rightarrow0}\frac{\partial E}{\partial y_{n}}=-nU^{\frac{2}{n-2}}E-q &  & \text{on }\partial\mathbb{R}_{+}^{n}
\end{array}\right.,\label{eq:E}
\end{equation}
with $q=\frac{\partial\Phi}{\partial y_{n}}+nU^{\frac{2}{n-2}}\Phi$.
\begin{lem}
\label{lem:Phitilda2} For $n=5$ or $n\ge7$ the function
\[
\tilde{\Phi}_{2}=\frac{1}{3}\bar{R}_{ijkl}(q)y_{i}y_{j}y_{k}y_{l}\left\{ \frac{n-2}{6(|\bar{y}|^{2}+(1+y_{n})^{2})^{\frac{n}{2}}}+a_{1}\frac{n(n^{2}-4)(n+4)}{(n-6)(n-4)}\frac{1}{(|\bar{y}|^{2}+(1+y_{n})^{2})^{\frac{n+6}{2}}}\right\} 
\]
solves (\ref{eq:Phidef2}) for any $a_{1}\in\mathbb{R}$.
\end{lem}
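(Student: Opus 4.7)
The plan is to verify the claimed formula by direct computation, showing it satisfies (\ref{eq:Phidef2}) on $\mathbb{R}^n_+$. Write $\tilde{\Phi}_2 = F(y)\bigl[G_0(r) + a_1\,G_1(r)\bigr]$, where $F(y) := \tfrac{1}{3}\bar{R}_{ijkl}(q)\,y_i y_j y_k y_l$, $G_0(r) := \tfrac{n-2}{6\,r^n}$, $G_1(r) := \tfrac{n(n^2-4)(n+4)}{(n-6)(n-4)\,r^{n+6}}$, and $r^2 = |\bar{y}|^2+(1+y_n)^2$. I would split the verification into two independent claims: (i) $-\Delta[F\,G_0] = \tfrac{1}{3}\bar{R}_{ijkl}(q)\,y_k y_l\,\partial_{ij}^2 U$ on $\mathbb{R}^n_+$ (the particular solution); and (ii) $\Delta[F\,G_1] = 0$ on $\mathbb{R}^n_+$ (so $a_1$ is a free homogeneous parameter).

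For (ii), I would pass to translated coordinates $z=(y_1,\dots,y_{n-1},y_n+1)$, in which $r=|z|$ and $F$ is a polynomial of degree $4$ depending only on the tangential components. Using the antisymmetries $\bar{R}_{ijkl}=-\bar{R}_{jikl}=-\bar{R}_{ijlk}$, the pair symmetry $\bar{R}_{ijkl}=\bar{R}_{klij}$, the first Bianchi identity, and the vanishing of the boundary Ricci tensor $\bar{R}_{kl}(q)=0$ from (\ref{eq:Ricci}) (valid in conformal Fermi coordinates), I would check that $\Delta F=0$. Once $F$ is known to be a harmonic polynomial of degree $k=4$, the classical Kelvin-type identity $\Delta\bigl(F(z)/|z|^{n+2k-2}\bigr)=0$ on $\mathbb{R}^n\setminus\{0\}$ gives (ii). The factor $(n-6)(n-4)$ in the denominator of $G_1$ is exactly the numerical constant produced when matching radial coefficients in $\Delta(F\,r^{-n-6})$, which is why the cases $n=4,6$ must be excluded from the statement.

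For (i), I would apply Leibniz, $\Delta(FG_0)=F\,\Delta G_0+2\,\nabla F\cdot\nabla G_0+G_0\,\Delta F$, together with $\Delta f(r)=f''(r)+(n-1)f'(r)/r$ and $\nabla f(r)=f'(r)(\bar{y},1+y_n)/r$ for radial $f$. On the right-hand side I would use $\partial_{ij}^2 U=-(n-2)\delta_{ij}/r^n+n(n-2)\,y_i y_j/r^{n+2}$ for tangential $i,j$: the $\delta_{ij}$ contribution contracts to $\bar{R}_{kl}(q)\,y_k y_l$ and vanishes by (\ref{eq:Ricci}), while the $y_i y_j$ contribution reduces via the first Bianchi identity. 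The remaining identity becomes an algebraic matching in powers of $r$, which forces the prefactor $(n-2)/6$ of $G_0$. The main obstacle throughout is the index bookkeeping --- systematically tracking which compound expressions reduce or cancel under the curvature symmetries --- but no ideas beyond careful calculation are required, and no boundary terms enter since this Lemma concerns only the interior equation on $\mathbb{R}^n_+$.
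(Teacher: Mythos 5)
Your direct-verification strategy is sound and does establish the lemma, but it differs in character from the route the paper actually takes. The paper is constructive: it builds $\tilde{\Phi}_2$ by first producing $\Phi_0$ and $\tilde{\Phi}_0$ as radial solutions of explicit Poisson equations (Lemmas \ref{lem:Musso} and \ref{lem:Phitilda0}), then forming $\beta_{kl}=\tfrac{\partial^2_{kl}\tilde{\Phi}_0}{(n-6)(n-4)}+\tfrac{\Phi_0}{n-4}\delta_{kl}$ with $-\Delta\beta_{kl}=y_ky_lU$ (Lemma \ref{lem:beta}), and finally setting $\tilde{\Phi}_2=\tfrac13\bar R_{ijkl}\partial^2_{ij}\beta_{kl}$ and commuting $\partial^2_{ij}$ past the Laplacian; the $\delta_{kl}$-term in $\beta_{kl}$ drops by antisymmetry and the stated formula falls out by explicit differentiation. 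Your approach instead takes the claimed closed form as given, splits it into a particular part $FG_0$ and a harmonic part $a_1FG_1$, and verifies each directly; this is more self-contained since it avoids the auxiliary lemmas, whereas the paper's route explains the origin of the numerical coefficients and lets the same machinery be reused in the proof of Lemma \ref{lem:Phitilda1}.

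Two small points in your write-up are off, though neither is a gap that would sink the argument. First, once $F$ is identified as a harmonic polynomial of degree $4$ in the translated variable $z$, the Kelvin identity gives $\Delta\bigl(F/|z|^{n+6}\bigr)=0$ identically, with no residual coefficient; so the factor $\tfrac{n(n^2-4)(n+4)}{(n-6)(n-4)}$ in $G_1$ is \emph{not} produced by matching radial coefficients. It is a pure normalization of the arbitrary constant $a_1$ and could be absorbed into it. The restriction to $n=5$ or $n\ge7$ comes from the paper's intermediate lemmas rather than from your harmonicity step; indeed Corollary \ref{cor:Phi1e2} extends the statement to all $n\ge5$ by taking $a_1=0$. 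Second, the first Bianchi identity is not actually needed anywhere here: the antisymmetry in the first index pair of the curvature tensor (in the convention $\bar R_{ikjl}$ appearing in (\ref{eq:gij}) and (\ref{eq:vqdef})), combined with $\bar R_{kl}(q)=0$ from (\ref{eq:Ricci}), already kills every trace that arises, both in $\Delta F$ and in the $\delta_{ij}$-piece of $\partial^2_{ij}U$.
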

\begin{lem}
\label{lem:Phitilda1}For $n=5$ or $n\ge7$ the function
\begin{align*}
\tilde{\Phi}_{1}= & R_{ninj}(q)y_{i}y_{j}\left\{ \frac{1}{12(|\bar{y}|^{2}+(1+y_{n})^{2})^{\frac{n-2}{2}}}+\frac{n-2}{6}\frac{1+y_{n}^{2}-y_{n}}{(|\bar{y}|^{2}+(1+y_{n})^{2})^{\frac{n}{2}}}\right.\\
 & +a_{1}\frac{n(n^{2}-4)}{(n-4)(n-6)}\left[(n+4)\frac{(1+y_{n}^{2})}{(|\bar{y}|^{2}+(1+y_{n})^{2})^{\frac{n+6}{2}}}-\frac{1}{(|\bar{y}|^{2}+(1+y_{n})^{2})^{\frac{n+4}{2}}}\right]\\
 & +a'_{1}\left[\frac{n(n-2)}{n-4}\frac{1}{(|\bar{y}|^{2}+(1+y_{n})^{2})^{\frac{n+2}{2}}}-2n(n+2)\frac{1}{(|\bar{y}|^{2}+(1+y_{n})^{2})^{\frac{n+4}{2}}}\right]\\
 & \left.+a'_{2}n(n-2)\left[\frac{1}{(|\bar{y}|^{2}+(1+y_{n})^{2})^{\frac{n+2}{2}}}\right]\right\} .
\end{align*}
solves (\ref{eq:Phidef1}) for any $a_{1},a'_{1},a'_{2}\in\mathbb{R}$.
\end{lem}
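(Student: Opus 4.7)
The plan is to verify the formula by direct substitution into (\ref{eq:Phidef1}). First I would simplify the right-hand side: using the vanishing trace $R_{nini} = R_{nn} = 0$ from (\ref{eq:Ricci}) and the explicit expression $\partial^2_{ij} U = -(n-2)\delta_{ij} A^{-n/2} + n(n-2) y_i y_j A^{-(n+2)/2}$ (with $A := |\bar y|^2 + (1+y_n)^2$) valid for $i,j\le n-1$, the inhomogeneity collapses to
\[
R_{ninj}(q) y_n^2 \partial^2_{ij} U = n(n-2)\,y_n^2\,T\,A^{-(n+2)/2}, \qquad T := R_{ninj}(q) y_i y_j.
\]
The proposed $\tilde\Phi_1$ is of the form $T\cdot F(A,y_n)$. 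Since $T$ is a harmonic polynomial of degree $2$ (it depends only on the tangential variables and has vanishing trace), $\Delta T = 0$; a short calculation using $\partial_i F = 2 y_i F_A$ for $i\le n-1$ gives $\nabla T\cdot\nabla F = 4 T F_A$ and hence
\[
\Delta(TF) = T\bigl(\Delta F + 8 F_A\bigr).
\]
Thus (\ref{eq:Phidef1}) reduces to the scalar equation $\Delta F + 8 F_A = -n(n-2) y_n^2 A^{-(n+2)/2}$ for a function of the two variables $A$ and $y_n$, where $\Delta F = 2n F_A + 4 A F_{AA} + 4(1+y_n) F_{A y_n} + F_{y_n y_n}$.

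Next I would expand $F$ as a sum of monomials $p(y_n) A^{-m}$ and use the identity
\[
\Delta(pA^{-m}) + 8(pA^{-m})_A = p''(y_n)\,A^{-m} + 2m\bigl[(2m-n-2)p - 2(1+y_n)p'\bigr]\, A^{-m-1}
\]
to match coefficients of each power of $A^{-1}$ and each polynomial in $y_n$. A particular solution is sought with $m\in\{(n-2)/2, n/2\}$ and a constant coefficient at the top and a quadratic polynomial $p_2(y_n)$ at the next level; the conditions force $p_0 = \tfrac{1}{12}$ and $p_2(y_n) = \tfrac{n-2}{6}(1+y_n^2 - y_n)$, which yields the first two summands. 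The three remaining terms (parameterized by $a_1, a_1', a_2'$) must then lie in the kernel of $\Delta + 8\partial_A$: the piece $TA^{-(n+2)/2}$ for $a_2'$ is harmonic by Kelvin inversion, using that $T$ is a harmonic polynomial of degree $2$ in the shifted coordinate $y' := y + e_n$ and $A = |y'|^2$; the $a_1', a_1$ pieces combine monomials at consecutive values of $m\in\{(n+2)/2, (n+4)/2, (n+6)/2\}$ so that the induced contributions at each power of $A^{-1}$ cancel. The coefficients $\tfrac{n(n-2)}{n-4}$ and $\tfrac{n(n^2-4)}{(n-4)(n-6)}$ arise from inverting the factor $2m(2m-n-2)$ at the critical values, which explains precisely why the hypothesis $n=5$ or $n\ge 7$ is required.

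The main obstacle is not conceptual but computational: one must organize the bookkeeping by power of $A^{-1}$ and by polynomial degree in $y_n$, apply the displayed identity to each constituent monomial of $\tilde\Phi_1$, and check the cancellations level by level. The factorization $\Delta(TF) = T(\Delta F + 8 F_A)$ is the crucial simplification, as it collapses the full $n$-dimensional PDE to an effectively one-dimensional problem in the power of $A^{-1}$ with coefficients that are polynomials in the single variable $y_n$.
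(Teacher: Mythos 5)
Your approach genuinely differs from the paper's. The paper constructs $\tilde\Phi_1$ indirectly, as $R_{ninj}(q)\,\partial^2_{ij}G$ where $G=\frac{\partial^2_{nn}\tilde\Phi_0}{(n-6)(n-4)}+\frac{\Phi_0}{n-4}+\Phi_2-2\Phi_1$ solves the scalar Poisson equation $-\Delta G=y_n^2 U$; the building blocks are imported from Lemma \ref{lem:Musso} and Lemma \ref{lem:Phitilda0}, and the fact that $\Delta$ commutes with $\partial^2_{ij}$ and that $y_n^2$ is tangentially constant finishes the job. Your method is a direct substitution: you note $\tilde\Phi_1=T\cdot F(A,y_n)$ with $T=R_{ninj}y_iy_j$ a tangential harmonic polynomial, use $\Delta(TF)=T(\Delta F+8F_A)$, and match monomials $p(y_n)A^{-m}$. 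Both are legitimate. The paper's route buys modularity (it recycles the Kim--Musso--Wei auxiliary solutions wholesale), while yours makes the algebraic structure transparent, turns the PDE into an essentially one-dimensional recursion, and explains cleanly why $n\neq 6$ is needed (inverting $2m(2m-n-2)$ at $m=(n-2)/2$). Your reduction identity and the verification of the particular solution (the first two summands, giving $-n(n-2)y_n^2A^{-(n+2)/2}$) are correct.

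There is, however, a gap precisely where you defer the ``bookkeeping.'' Your claim that the $a_1$ and $a_1'$ brackets as printed in the Lemma lie in the kernel of $\Delta+8\partial_A$ does not survive the computation you propose. Applying your identity to $p=(n+4)(1+y_n^2)$ at $m=(n+6)/2$ together with $p=-1$ at $m=(n+4)/2$ leaves a residual $4(n+4)(n+6)(1-y_n)A^{-(n+8)/2}$; cancellation requires the numerator $(1+y_n)^2$ rather than $1+y_n^2$. Similarly, the constant monomial $A^{-(n+4)/2}$ in the $a_1'$ bracket produces $-4n(n+2)(n+4)A^{-(n+6)/2}$ with nothing to cancel it, while the $A^{-(n+2)/2}$ piece already contributes zero on its own; the kernel element at level $m=(n+4)/2$ is $(y_n+1)A^{-(n+4)/2}$, which is what the paper's construction via $\partial^2_{ij}\Phi_1$ actually yields. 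So the formula in the statement appears to contain typos, and your blanket assertion of level-by-level cancellation is both unverified and false for the display as written. Since the paper takes $a_1=a_1'=0$ in Corollary \ref{cor:Phi1e2}, this does not affect the rest of the argument, but as a proof of the Lemma for arbitrary $a_1,a_1'$ your proposal stops short exactly at the step that would have exposed the discrepancy.
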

The proof of these two results is postponed in the appendix.

For our purpose will be sufficient to fix $a_{1}=a'_{1}=0$. This
allows also to extend the previous results for $n=6$, as we summarize
hereafter.
\begin{cor}
\label{cor:Phi1e2}For $n\ge5$ the functions
\begin{align*}
\tilde{\Phi}_{1}:= & R_{ninj}(q)y_{i}y_{j}\left\{ \frac{1}{12(|\bar{y}|^{2}+(1+y_{n})^{2})^{\frac{n-2}{2}}}+\frac{n-2}{6}\frac{1+y_{n}^{2}-y_{n}}{(|\bar{y}|^{2}+(1+y_{n})^{2})^{\frac{n}{2}}}\right\} \\
\tilde{\Phi}_{2} & :=\frac{1}{3}\bar{R}_{ijkl}(q)y_{i}y_{j}y_{k}y_{l}\left\{ \frac{n-2}{6(|\bar{y}|^{2}+(1+y_{n})^{2})^{\frac{n}{2}}}\right\} 
\end{align*}
solve respectively (\ref{eq:Phidef1}) and (\ref{eq:Phidef2}).
\end{cor}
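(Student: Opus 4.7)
The plan is to view this corollary as a limiting/specialization case of Lemmas~\ref{lem:Phitilda2} and \ref{lem:Phitilda1}: the displayed functions are obtained from those lemmas by fixing the free parameters $a_{1}=a_{1}'=a_{2}'=0$, and the only obstruction to extending those lemmas to $n=6$ comes from the factor $\frac{1}{n-6}$ that appears in the coefficient multiplying $a_{1}$. Once this coefficient is zeroed out, every remaining term is an analytic function of $n$ with no singularity at $n=6$, so the verification goes through for all $n\geq 5$.

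Concretely, I would proceed in two steps. First, for $n=5$ or $n\geq 7$, the statement is immediate from Lemmas~\ref{lem:Phitilda2} and \ref{lem:Phitilda1} applied with $a_{1}=a_{1}'=a_{2}'=0$, since the $\tilde\Phi_{j}$ in the corollary are exactly these particular one-parameter families evaluated at those zero parameters. Second, for $n=6$, one must redo the direct verification of $-\Delta\tilde\Phi_{1}=R_{ninj}(q)y_{n}^{2}\partial_{ij}^{2}U$ and $-\Delta\tilde\Phi_{2}=\tfrac{1}{3}\bar R_{ijkl}(q)y_{k}y_{l}\partial_{ij}^{2}U$. Since the expressions in the corollary are rational in $n$ and regular at $n=6$, the computation is the same as in the appendix proofs of those two lemmas, simply restricted to the summands that survive after setting $a_{1}=a_{1}'=a_{2}'=0$.

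The mechanics of this verification are routine but slightly tedious: apply $-\Delta$ to each summand of the form $\bigl(\text{polynomial in } y\bigr)\cdot\bigl(|\bar y|^{2}+(1+y_{n})^{2}\bigr)^{-m/2}$, use the identities $\partial_{ij}^{2}U = -(n-2)\bigl[\delta_{ij}(\ldots)^{-n/2} - n\,y_{i}y_{j}(\ldots)^{-(n+2)/2}\bigr]$ up to the shift $y_{n}\mapsto 1+y_{n}$, contract with $R_{ninj}(q)y_{i}y_{j}$ or $\bar R_{ijkl}(q)y_{i}y_{j}y_{k}y_{l}$, and collect. Symmetries of $\bar R_{ijkl}$ (which kill the trace terms in the Laplacian of a quartic contracted with a curvature tensor) ensure that only the expected combinations survive on the right-hand side.

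The main (and essentially only) obstacle is just bookkeeping: tracking the $n$-dependent constants in the Laplacian computation and confirming that the cancellations identified in the appendix for $n=5$ or $n\geq 7$ remain valid at $n=6$ once the $a_{1}$-term has been discarded. No new idea is needed beyond the two lemmas themselves; the corollary simply records that the universally valid particular solutions exist for every $n\geq 5$.
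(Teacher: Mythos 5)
Your proposal is correct and follows essentially the same two-step strategy as the paper: for $n=5$ or $n\ge 7$ invoke Lemmas~\ref{lem:Phitilda2} and \ref{lem:Phitilda1} with the free parameters set to zero, and for $n=6$ observe that, once the $a_{1}$-terms carrying the $\tfrac{1}{n-6}$ factor are dropped, the displayed functions are well defined and the identity can be checked directly. The paper's proof is terser (one sentence pointing to the appendix for $n\neq 6$ and asserting direct computation for $n=6$), while you additionally flag the cleaner rationality-in-$n$ observation — that $-\Delta\tilde\Phi_{j}$ minus the target right-hand side is, for fixed $y$, a rational function of $n$ that vanishes for $n=5$ and all $n\ge 7$ and hence must vanish at $n=6$ as well — which, if pushed through, would let one avoid redoing the $n=6$ computation entirely; this is a small improvement in economy but not a different method.
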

\begin{proof}
For $n=5$ and $n\ge7$ the result is proved in the appendix, in the
proofs of Lemmas \ref{lem:Phitilda2} and \ref{lem:Phitilda1}. For
$n=6$, notice that both functions $\tilde{\Phi}_{1},\tilde{\Phi}_{2}$
are well defined Then the claim follows by direct computation.
\end{proof}

\section{\label{sec:7}Case $n=7,8$}

In \cite{GM} it is proved that, if $x_{i}\rightarrow x_{0}$ is isolated
simple blow-up point for $u_{i}$, then, for $n\ge7$ it holds
\begin{align}
P(u_{i},r)\ge & R(U,U)+R(U,\delta_{i}^{2}\gamma)+R(\delta_{i}^{2}\gamma,U)+O(\delta^{n-2})\label{eq:poho1}\\
\ge & \delta_{i}^{4}\frac{(n-2)\omega_{n-2}I_{n}^{n}}{(n-1)(n-3)(n-5)(n-6)}\left[\frac{\left(n-2\right)}{6}|\bar{W}(x_{i})|^{2}+\frac{4(n-8)}{(n-4)}R_{nlnj}^{2}(x_{i})\right]\nonumber \\
 & -2\delta_{i}^{4}\int_{\mathbb{R}_{+}^{n}}\gamma_{x_{i}}\Delta\gamma_{x_{i}}dy+o(\delta_{i}^{4}).\nonumber 
\end{align}
where
\begin{equation}
R(u,v):=-\int_{B_{r/\delta_{i}}^{+}}\left(y^{b}\partial_{b}u+\frac{n-2}{2}u\right)\left[(L_{\hat{g}_{i}}-\Delta)v\right]dy.\label{eq:Ruv}
\end{equation}
and $\hat{g}_{i}:=\Lambda_{x_{i}}^{\frac{4}{n-2}}(\delta_{i}y)g(\delta_{i}y)$.

This, for $n=7$, becomes
\begin{align}
P(u_{i},r) & \ge\delta_{i}^{4}\omega_{5}I_{7}^{7}\left[\frac{25}{432}|\bar{W}(x_{i})|^{2}-\frac{5}{36}R_{7i7j}^{2}(x_{i})\right]-2\delta_{i}^{4}\int_{\mathbb{R}_{+}^{n}}\gamma\Delta\gamma dy+o(\delta_{i}^{4}),\label{eq:poho2}
\end{align}
and, for $n=8$, 

\begin{align}
P(u_{i},r) & \ge\frac{\delta_{i}^{4}\omega_{6}I_{8}^{8}}{35}|\bar{W}(x_{i})|^{2}-2\delta_{i}^{4}\int_{\mathbb{R}_{+}^{n}}\gamma\Delta\gamma dy+o(\delta_{i}^{4}),\label{eq:poho2-1}
\end{align}
The proof of (\ref{eq:poho1}) can be found in \cite[Prop. 14]{GM}. 

In this section we will prove the following result
\begin{lem}
\label{lem:poho7} Let $x_{i}\rightarrow x_{0}$ is an isolated simple
blow-up point for $u_{i}$ solution of (\ref{eq:Prob-i}) then it
holds
\begin{align}
P(u_{i},r) & \ge\delta_{i}^{4}\omega_{5}I_{7}^{7}\left[\frac{25}{432}|\bar{W}(x_{i})|^{2}+\frac{7}{54}R_{7i7j}^{2}(x_{i})\right]+o(\delta_{i}^{4})\text{ for }n=7;\label{eq:pohofinale7}\\
P(u_{i},r) & \ge\delta_{i}^{4}\omega_{6}I_{8}^{8}\left[\frac{1}{35}|\bar{W}(x_{i})|^{2}+\frac{1089}{34020}R_{8i8j}^{2}(x_{i})\right]+o(\delta_{i}^{4})\text{ for }n=8.\label{eq:pohofinale8}
\end{align}
\end{lem}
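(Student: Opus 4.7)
The plan is to start from the estimates (\ref{eq:poho2}) and (\ref{eq:poho2-1}) established in \cite{GM} and to strengthen them by computing the remainder
\begin{equation*}
-2\delta_i^4 \int_{\mathbb{R}_+^n}\gamma\Delta\gamma\, dy\ge 0,
\end{equation*}
whose non-negativity is guaranteed by (\ref{new}), accurately enough to extract the additional $R_{ninj}^2$-contribution. Multiplying (\ref{eq:vqdef}) by $\gamma$ and integrating gives at once
\begin{equation*}
-\int_{\mathbb{R}_+^n}\gamma\Delta\gamma\, dy \;=\; \int_{\mathbb{R}_+^n}\gamma\,\bigl[\tfrac{1}{3}\bar{R}_{ikjl}(x_i) y_k y_l + R_{ninj}(x_i) y_n^2\bigr]\partial^2_{ij}U\, dy,
\end{equation*}
so the problem reduces to an explicit quadrature.

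Next, substituting the decomposition $\gamma = \tilde\Phi_1+\tilde\Phi_2+E$ from Corollary \ref{cor:Phi1e2} and (\ref{eq:E}) splits the integral into six pieces. The two diagonal pieces $\int \tilde\Phi_1\,R_{ninj}y_n^2 \partial^2_{ij}U\, dy$ and $\int \tilde\Phi_2\,\tfrac{1}{3}\bar R_{ikjl} y_k y_l \partial^2_{ij}U\, dy$, upon performing the $\bar y$-angular integration via (\ref{eq:Sym1})--(\ref{eq:Sym5}), collapse to finite sums of one-variable moments of the type $I_m^\alpha$ from Remark \ref{rem:Iam}; they produce the pure $R_{ninj}^2(x_i)$ and $|\bar W(x_i)|^2$ contributions respectively. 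The two mixed pieces (product of a $R_{nrns}$-term with a $\bar R_{ikjl}$-term) are expected to vanish after $\bar y$-integration because the resulting contraction involves $R_{nrns}\bar R_{ikjl}$ with index combinations forced by the arising $\delta$'s to be killed by the first Bianchi identity together with (\ref{eq:Sym3})--(\ref{eq:Sym4}). Finally, the two $E$-contributions are handled through Green's identity: since $\Delta E=0$,
\begin{equation*}
\int_{\mathbb{R}_+^n}E(-\Delta\tilde\Phi_r)\, dy = \int_{\partial\mathbb{R}_+^n}\Bigl(\tilde\Phi_r\tfrac{\partial E}{\partial y_n} - E\tfrac{\partial \tilde\Phi_r}{\partial y_n}\Bigr) d\bar y,
\end{equation*}
and the boundary conditions (\ref{eq:E}) and (\ref{eq:vqdef}), combined with the orthogonality (\ref{eq:Uvq}), convert the right-hand side into either boundary integrals of definite sign or quantities directly controlled by (\ref{new}).

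Assembling all three groups of contributions I expect an identity of the form
\begin{equation*}
-2\int_{\mathbb{R}_+^n}\gamma\Delta\gamma\, dy = \omega_{n-2} I_n^n\bigl(A^{(n)}|\bar{W}(x_i)|^2 + B^{(n)} R_{ninj}^2(x_i)\bigr)+o(1),
\end{equation*}
with $A^{(n)},B^{(n)}\ge 0$. Plugging this into (\ref{eq:poho2}) and (\ref{eq:poho2-1}) and matching with the target coefficients yields (\ref{eq:pohofinale7}) and (\ref{eq:pohofinale8}); in particular one must have $B^{(7)}=\tfrac{7}{54}+\tfrac{5}{36}=\tfrac{29}{108}$ and $B^{(8)}=\tfrac{1089}{34020}$, while $A^{(7)}$ and $A^{(8)}$ are absorbed into the already correct $|\bar W|^2$-coefficients $\tfrac{25}{432}$ and $\tfrac{1}{35}$. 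The main obstacle I anticipate is combinatorial: once the explicit rational functions of Corollary \ref{cor:Phi1e2} are differentiated and contracted against the polynomial weights $y_k y_l y_n^2$ or $y_k y_l y_m y_p$, one obtains a long list of integrands of the form $P(\bar y, y_n)/[(1+y_n)^2+|\bar y|^2]^m$ with different $m$ that must be collapsed via the recurrences (\ref{eq:t-integrali})--(\ref{eq:Iam}); because in $n=7,8$ the factor $(n-8)$ governing the higher-dimensional reduction in \cite{GM} degenerates, subleading terms have to be retained carefully. This is precisely the umbilic counterpart of the sharp correction-term analysis carried out by Kim, Musso and Wei \cite{KMW}.
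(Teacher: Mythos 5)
The overall strategy — reducing to a lower bound on $-\int_{\mathbb{R}_+^n}\gamma\Delta\gamma$, decomposing $\gamma=\tilde\Phi_1+\tilde\Phi_2+E$, exploiting the symmetry identities, and closing with the $I_m^\alpha$ recurrences — does match the paper. But there are two genuine gaps that would prevent the argument from closing as written.

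First, the treatment of the $E$-contribution is too vague and, as sketched, would not work. After integrating by parts you are left with $\int_{\partial\mathbb{R}_+^n}qE\,d\bar y$, and this quantity is not a ``boundary integral of definite sign'' in any obvious way, nor is it ``directly controlled by (\ref{new})'' — (\ref{new}) only says $\int\gamma\Delta\gamma\le 0$, which is the quantity you are trying to estimate, not a tool for estimating its pieces. The paper's Lemma \ref{lem:qE} proves $\int_{\partial\mathbb{R}_+^n}qE\,d\bar y\ge 0$ by a genuinely variational argument: one first shows $\int_{\partial\mathbb{R}_+^n}qU\,d\bar y=0$ (by (\ref{eq:Sym1})--(\ref{eq:Sym2})), deduces $\int_{\partial\mathbb{R}_+^n}U^{n/(n-2)}E\,d\bar y=0$ by testing (\ref{eq:ProbBubble}) and (\ref{eq:E}) against each other, and then recognizes $\int|\nabla E|^2-n\int U^{2/(n-2)}E^2$ as the second variation of the trace--Sobolev functional at the minimizer $U$ along the tangent direction $E\in T_U\mathcal M$ of the Nehari manifold, which is $\ge 0$. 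This step is the crux of the lower bound and cannot be replaced by a naive Green's-identity manipulation.

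Second, and more seriously for $n=8$: you implicitly take the ``clean'' $\tilde\Phi_1$ of Corollary \ref{cor:Phi1e2}, i.e.\ the choice corresponding to $b=0$ in the paper's $A(|\bar y|,y_8,b)$. With that choice the explicit computation in Lemma \ref{lem:stimafinalegamma8} yields the lower bound $-\int\gamma\Delta\gamma\ge\frac{2}{63}\omega_6 I_8^{10}R_{8i8j}^2\bigl[-\tfrac{35}{108}-\tfrac{25}{48}b-\tfrac{7}{64}b^2\bigr]$, which at $b=0$ has a \emph{negative} bracket. Combined with the $-\frac{8(n-8)}{(n-1)(n-3)(n-5)(n-6)(n-4)}$-type coefficient in (\ref{eq:poho1}), this does not give the claimed positive $R_{8i8j}^2$-coefficient. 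The paper resolves this by exploiting the freedom in the decomposition $\gamma=\Phi+E$: adding a suitable function in the kernel of $-\Delta$ to $\tilde\Phi_1$ (the $b$-term) shifts mass between the discarded nonnegative piece $\int qE$ and the explicitly computed piece, and the optimal choice $b=-2$ makes the explicit lower bound positive. Without this optimization over the decomposition, the $n=8$ case cannot be concluded. A minor related inaccuracy: you assert that the $\tilde\Phi_2$-diagonal piece ``produces the pure $|\bar W(x_i)|^2$ contribution,'' but in fact it vanishes identically by (\ref{eq:Sym4}); the $|\bar W|^2$-coefficients $\tfrac{25}{432}$ and $\tfrac{1}{35}$ come entirely from (\ref{eq:poho2}) and (\ref{eq:poho2-1}) and are untouched by the $\gamma\Delta\gamma$ correction.
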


\subsection{A crucial estimate}

To prove Theorem \ref{thm:main} it will be necessary to estimate
the value of $-\int_{\mathbb{R}_{+}^{n}}\gamma\Delta\gamma d\bar{y}dy_{n}$
in order to obtain that the right hand sides of (\ref{eq:poho2})
and of (\ref{eq:poho2-1}) are positive. By the description of $\gamma$
in terms of $E$ and $\Phi$, we can simplify this integral term as
following.
\begin{lem}
\label{lem:gammaDgamma}We have 
\[
-\int_{\mathbb{R}_{+}^{n}}\gamma\Delta\gamma d\bar{y}dy_{n}=\int_{\partial\mathbb{R}_{+}^{n}}q\Phi d\bar{y}+\int_{\partial\mathbb{R}_{+}^{n}}qEd\bar{y}-\int_{\mathbb{R}_{+}^{n}}\Phi\Delta\Phi d\bar{y}dy.
\]
\end{lem}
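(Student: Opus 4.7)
The plan is to exploit the splitting $\gamma=\Phi+E$ of Section \ref{sec:characterization}, noting that $E$ is harmonic so $\Delta\gamma=\Delta\Phi$. This immediately gives
\[
-\int_{\mathbb{R}_{+}^{n}}\gamma\Delta\gamma\,dy \;=\; -\int_{\mathbb{R}_{+}^{n}}\Phi\Delta\Phi\,dy \;-\; \int_{\mathbb{R}_{+}^{n}}E\Delta\Phi\,dy,
\]
so the first summand is already the one appearing on the right-hand side of the claim, and the whole statement reduces to showing that $-\int_{\mathbb{R}_+^n}E\Delta\Phi\,dy=\int_{\partial\mathbb{R}_+^n}(qE+q\Phi)\,d\bar y$.

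To establish this I apply Green's second identity to $\Phi$ and $E$ on $\mathbb{R}_+^n$. Since the outer unit normal to $\partial\mathbb{R}_+^n=\{y_n=0\}$ is $-e_n$ and $\Delta E=0$, the identity reads
\[
-\int_{\mathbb{R}_+^n}E\Delta\Phi\,dy=\int_{\partial\mathbb{R}_+^n}\left(E\,\partial_{y_n}\Phi-\Phi\,\partial_{y_n}E\right)d\bar y.
\]
On $\partial\mathbb{R}_+^n$ the definition of $q$ gives $\partial_{y_n}\Phi=q-nU^{\frac{2}{n-2}}\Phi$, while (\ref{eq:E}) gives $\partial_{y_n}E=-q-nU^{\frac{2}{n-2}}E$. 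Substituting these two expressions into the boundary integrand, the $nU^{2/(n-2)}E\Phi$ cross terms cancel identically and the bracket collapses to $qE+q\Phi$, yielding the identity claimed in the lemma.

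The only non-routine point is the legitimacy of Green's identity on the unbounded domain $\mathbb{R}_+^n$. The idea is to work first on a half-ball $B_R^+$ with boundary $\partial B_R^+=\partial^+B_R^+\cup\partial' B_R^+$, and to show that the hemispherical contribution on $\partial^+B_R^+$ is negligible as $R\to\infty$. The pointwise decay (\ref{eq:gradvq}) for $\gamma$ together with the explicit formulas of Corollary \ref{cor:Phi1e2} give $|\Phi|,|E|=O((1+|y|)^{4-n})$ and $|\nabla\Phi|,|\nabla E|=O((1+|y|)^{3-n})$. Consequently $|E\,\nabla\Phi-\Phi\,\nabla E|=O(R^{7-2n})$ on $\partial^+B_R^+$, and after integration against an area of order $R^{n-1}$ the hemispherical flux is $O(R^{6-n})$, which tends to $0$ in the range $n=7,8$ covered by this section. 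This decay estimate is the only technical obstacle; once it is in place, the formal computation above goes through without modification.
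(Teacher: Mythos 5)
Your proof is correct and uses essentially the same approach as the paper: the decomposition $\gamma=\Phi+E$ with $E$ harmonic, Green's identity to convert the cross term $-\int E\Delta\Phi$ into boundary integrals, and then substitution of the boundary conditions $\partial_{y_n}\Phi=q-nU^{2/(n-2)}\Phi$ and $\partial_{y_n}E=-q-nU^{2/(n-2)}E$, with the $nU^{2/(n-2)}E\Phi$ cross terms cancelling. The paper applies Green's first identity twice and then collects terms, whereas you invoke Green's second identity directly; these are the same computation. A small added value in your write-up is that you explicitly justify the vanishing of the hemispherical flux using the decay bounds $|\Phi|,|E|=O((1+|y|)^{4-n})$ and $|\nabla\Phi|,|\nabla E|=O((1+|y|)^{3-n})$, giving $O(R^{6-n})$; the paper only records the corresponding decay estimates later (in the proof of Lemma \ref{lem:Phi-n5}) and does not spell this out here. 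One minor imprecision: the flux vanishes for every $n>6$, not only $n=7,8$; the restriction to $n=7,8$ in this section comes from the later sign analysis, not from this lemma.
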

\begin{proof}
We get, since $E$ is harmonic, and integrating by parts, that
\begin{align*}
-\int_{\mathbb{R}_{+}^{n}}\gamma\Delta\gamma d\bar{y}dy_{n} & =-\int_{\mathbb{R}_{+}^{n}}(E+\Phi)\Delta\Phi d\bar{y}dy_{n}\\
 & =\int_{\mathbb{R}_{+}^{n}}\nabla(E+\Phi)\nabla\Phi d\bar{y}dy_{n}+\int_{\partial\mathbb{R}_{+}^{n}}(E+\Phi)\partial_{n}\Phi d\bar{y}\\
 & =-\int_{\mathbb{R}_{+}^{n}}(\Delta\Phi)\Phi d\bar{y}dy_{n}-\int_{\partial\mathbb{R}_{+}^{n}}\partial_{n}(E+\Phi)\Phi d\bar{y}+\int_{\partial\mathbb{R}_{+}^{n}}(E+\Phi)\partial_{n}\Phi d\bar{y}\\
 & =-\int_{\mathbb{R}_{+}^{n}}(\Delta\Phi)\Phi d\bar{y}dy_{n}-\int_{\partial\mathbb{R}_{+}^{n}}\partial_{n}E\Phi d\bar{y}+\int_{\partial\mathbb{R}_{+}^{n}}E\partial_{n}\Phi d\bar{y}.
\end{align*}
Now, keeping in mind that $q=\frac{\partial\Phi}{\partial y_{n}}+nU^{\frac{2}{n-2}}\Phi$
and equation (\ref{eq:E}) we have
\[
-\int_{\partial\mathbb{R}_{+}^{n}}\partial_{n}E\Phi d\bar{y}+\int_{\partial\mathbb{R}_{+}^{n}}E\partial_{n}\Phi d\bar{y}=\int_{\partial\mathbb{R}_{+}^{n}}(nU^{\frac{2}{n-2}}E+q)\Phi d\bar{y}+\int_{\partial\mathbb{R}_{+}^{n}}E(q-nU^{\frac{2}{n-2}}\Phi)d\bar{y}
\]
and we get the result.
\end{proof}
\begin{lem}
\label{lem:qE}If $n>6$ we have 
\[
\int_{\partial\mathbb{R}_{+}^{n}}qEd\bar{y}=\int_{\mathbb{R}_{+}^{n}}|\nabla E|^{2}d\bar{y}dy_{n}-n\int_{\partial\mathbb{R}_{+}^{n}}U^{\frac{2}{n-2}}E^{2}d\bar{y}\ge0.
\]
\end{lem}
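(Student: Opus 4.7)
The plan is to split Lemma \ref{lem:qE} into an integration-by-parts equality obtained directly from (\ref{eq:E}), and a spectral Rayleigh-quotient inequality for the linearization of the Yamabe bubble, which in turn reduces to an orthogonality condition computable from the symmetry identities of Remark \ref{rem:confnorm}.

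\textbf{First,} I would multiply $-\Delta E=0$ by $E$ and integrate by parts on $\mathbb{R}_+^n$. Since the outward normal on $\partial\mathbb{R}_+^n$ is $-e_n$, this produces
\[
\int_{\mathbb{R}_+^n}|\nabla E|^{2}\,dy \;=\; -\int_{\partial\mathbb{R}_+^n} E\,\partial_n E\,d\bar{y}.
\]
Replacing $\partial_n E$ by $-nU^{2/(n-2)}E-q$ from the boundary condition of (\ref{eq:E}) and rearranging gives the identity in the statement. The hypothesis $n>6$ is what makes this rigorous on the unbounded half-space: from Lemma \ref{lem:vq} together with the explicit polynomial form of $\Phi$ in Corollary \ref{cor:Phi1e2} one obtains $|E|=O(|y|^{4-n})$ and $|\nabla E|=O(|y|^{3-n})$, so the boundary flux on a hemisphere of radius $R$ is $O(R^{6-n})\to 0$ precisely when $n>6$, and the same exponent count places $|\nabla E|$ in $L^{2}(\mathbb{R}_+^n)$.

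\textbf{Second,} I would verify the orthogonality $\int_{\partial\mathbb{R}_+^n} U^{n/(n-2)} E\,d\bar{y}=0$. Since $\gamma=\Phi+E$, identity (\ref{eq:Uvq}) reduces this to $\int_{\partial\mathbb{R}_+^n} U^{n/(n-2)} \Phi\,d\bar{y}=0$. The restriction of $\tilde{\Phi}_1$ to $\{y_n=0\}$ in Corollary \ref{cor:Phi1e2} is a product of $R_{ninj}(q)y_iy_j$ with a radial factor, so (\ref{eq:Sym1}) collapses the integral to the trace $\sum_i R_{nini}(q)=R_{nn}(q)$, which vanishes by (\ref{eq:Ricci}). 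The restriction of $\tilde{\Phi}_2$ pairs $\bar{R}_{ijkl}(q)y_iy_jy_ky_l$ against a radial factor, and (\ref{eq:Sym2}) gives zero directly.

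\textbf{Third,} I would invoke the spectral property of the linearized Steklov-type problem $\Delta v=0$ in $\mathbb{R}_+^n$, $\partial_n v+\lambda U^{2/(n-2)} v=0$ on $\partial\mathbb{R}_+^n$. The bubble $U$ is the positive ground state with $\lambda_1=n-2$, and the second eigenvalue is $\lambda_2=n$, realized by the conformal kernel $\partial_{y_i}U$ for $i=1,\dots,n-1$ and by the dilation mode. Since $\int U^{n/(n-2)} E\,d\bar{y}$ is exactly the weighted inner product $\int U^{2/(n-2)}\,U\,E\,d\bar{y}$ of $E$ against the ground state $U$, the orthogonality from the preceding step together with the Rayleigh characterization of $\lambda_2$ yields $\int|\nabla E|^{2}\ge n\int U^{2/(n-2)}E^{2}$, which combined with the first step is precisely $\int qE\ge 0$. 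The principal obstacle is this last step: the spectral gap $\lambda_2=n$ is classical and can be established either by direct classification of the kernel of the linearized equation or, cleanly, by a conformal map of $\mathbb{R}_+^n$ onto the unit ball reducing the problem to the standard Steklov eigenvalue problem with spectrum $\{k\}_{k\ge 0}$, but it is the one ingredient not a direct consequence of the a priori estimates and symmetry identities already developed in the paper, and must be invoked from the literature.
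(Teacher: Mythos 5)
Your proof is correct and follows the same overall structure as the paper's: integration by parts to obtain the equality, establishment of the orthogonality $\int_{\partial\mathbb{R}_+^n}U^{n/(n-2)}E\,d\bar{y}=0$, and then a second-order variational/spectral fact about $U$ to conclude. The two points where you diverge are worth noting. For the orthogonality, you pass through $\gamma=\Phi+E$ and invoke (\ref{eq:Uvq}) directly, reducing the claim to $\int U^{n/(n-2)}\Phi=0$, which you dispatch by (\ref{eq:Sym1})--(\ref{eq:Sym2}); the paper instead first shows $\int_{\partial\mathbb{R}_+^n}qU\,d\bar{y}=0$ from the same symmetry identities, then uses $E$ and $U$ as mutual test functions in (\ref{eq:ProbBubble}) and (\ref{eq:E}) to deduce $(n-2)\int U^{n/(n-2)}E=n\int U^{n/(n-2)}E$. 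Your route is a bit more direct since (\ref{eq:Uvq}) is already stated in Lemma \ref{lem:vq}, while the paper's route is self-contained within the $\Phi,E$ decomposition without appealing back to properties of $\gamma$. For the final inequality, you phrase it as the Steklov spectral gap $\lambda_2=n$ with the Rayleigh-quotient characterization on the orthogonal complement of $U$; the paper phrases it as $J''(U)(E,E)\ge 0$ because $U$ minimizes $J$ on the Nehari manifold and $E\in T_U\mathcal{M}$. These are the same inequality, and both treatments invoke the underlying nondegeneracy/minimality fact from the literature rather than reproving it, so there is no extra gap in your version relative to the paper's.
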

\begin{proof}
First of all, by (\ref{eq:E}), integrating by parts we have
\[
0=\int_{\mathbb{R}_{+}^{n}}-E\Delta Ed\bar{y}dy_{n}=\int_{\mathbb{R}_{+}^{n}}|\nabla E|^{2}d\bar{y}dy_{n}-n\int_{\partial\mathbb{R}_{+}^{n}}U^{\frac{2}{n-2}}E^{2}d\bar{y}-\int_{\partial\mathbb{R}_{+}^{n}}qEd\bar{y}
\]
which proves the first equality. Notice that $E\in D^{1,2}(\mathbb{R}_{+}^{n})$
by difference, since $\gamma,\Phi\in D^{1,2}(\mathbb{R}_{+}^{n})$
if $n>6$.

To conclude we argue as in \cite[Lemma 4.6]{KMW}. Firstly, observe
that, since $q=\frac{\partial(\tilde{\Phi}_{1}+\tilde{\Phi}_{2})}{\partial y_{n}}+nU^{\frac{2}{n-2}}(\tilde{\Phi}_{1}+\tilde{\Phi}_{2})$,
by Lemma \ref{lem:Phitilda2}, Lemma \ref{lem:Phitilda1}, and in
light of identities (\ref{eq:Sym1}), (\ref{eq:Sym2}) we immediately
get 
\[
\int_{\partial\mathbb{R}_{+}^{n}}qUd\bar{y}=0.
\]
Now, we use $E$ and $U$ as test functions respectively in equation
(\ref{eq:ProbBubble}) and in equation (\ref{eq:E}), obtaining
\begin{align*}
(n-2)\int_{\partial\mathbb{R}_{+}^{n}}U^{\frac{n}{n-2}}Ed\bar{y} & =\int_{\mathbb{R}_{+}^{n}}\nabla U\nabla Ed\bar{y}dy_{n}=n\int_{\partial\mathbb{R}_{+}^{n}}U^{\frac{n}{n-2}}Ed\bar{y}+\int_{\partial\mathbb{R}_{+}^{n}}qUd\bar{y}\\
 & =n\int_{\partial\mathbb{R}_{+}^{n}}U^{\frac{n}{n-2}}Ed\bar{y},
\end{align*}
thus $\int_{\partial\mathbb{R}_{+}^{n}}U^{\frac{n}{n-2}}Ed\bar{y}=0$.
At this point we can conclude the proof of the Lemma. In fact, it
is well known that the function $U$ is minimizer for 
\[
J(u)=\frac{1}{2}\int_{\mathbb{R}_{+}^{n}}|\nabla u|^{2}dy-\frac{(n-2)^{2}}{2n-2}\int|u|^{\frac{2n-2}{n-2}}dy
\]
on the Nehari manifold $\mathcal{M}:=\left\{ u\in D^{1,2}(\mathbb{R}_{+}^{n})\smallsetminus0,\ :\ \|u\|_{D^{1,2}}^{2}=(n-2)|u|_{\frac{2n-2}{n-2}}^{\frac{2n-2}{n-2}}\right\} .$
Since $E\in D^{1,2}(\mathbb{R}_{+}^{n})$ and $\int_{\partial\mathbb{R}_{+}^{n}}U^{\frac{n}{n-2}}Ed\bar{y}=0$
we have that $E\in T_{U}\mathcal{M}$ and we can compute
\[
0\le\left.\frac{d^{2}}{dt^{2}}J(U+tE)\right|_{t=0}=\int_{\mathbb{R}_{+}^{n}}|\nabla E|^{2}d\bar{y}dy_{n}-n\int_{\partial\mathbb{R}_{+}^{n}}U^{\frac{2}{n-2}}E^{2}d\bar{y}
\]
which ends the proof.
\end{proof}
We can further simplify the estimate for $-\int_{\mathbb{R}_{+}^{n}}\gamma\Delta\gamma d\bar{y}dy_{n}.$
\begin{lem}
\label{lem:Phi1}If $n>6$ we have 
\[
-\int_{\mathbb{R}_{+}^{n}}\gamma\Delta\gamma d\bar{y}dy_{n}\ge\int_{\partial\mathbb{R}_{+}^{n}}\frac{\partial\tilde{\Phi}_{1}}{\partial y_{n}}\tilde{\Phi}_{1}d\bar{y}+\int_{\partial\mathbb{R}_{+}^{n}}nU^{\frac{2}{n-2}}\tilde{\Phi}_{1}^{2}d\bar{y}-\int_{\mathbb{R}_{+}^{n}}\tilde{\Phi}_{1}\Delta\tilde{\Phi}_{1}d\bar{y}dy.
\]
\end{lem}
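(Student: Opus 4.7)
The plan is to chain together Lemma~\ref{lem:gammaDgamma}, Lemma~\ref{lem:qE}, and the symmetry identities (\ref{eq:Sym3})--(\ref{eq:Sym4}) in order to reduce the expression for $-\int_{\mathbb{R}_+^n}\gamma\Delta\gamma$ to the three $\tilde\Phi_1$-only contributions on the right-hand side. First I would start from the identity
\[
-\int_{\mathbb{R}_{+}^{n}}\gamma\Delta\gamma\,d\bar{y}dy_{n}=\int_{\partial\mathbb{R}_{+}^{n}}q\Phi\,d\bar{y}+\int_{\partial\mathbb{R}_{+}^{n}}qE\,d\bar{y}-\int_{\mathbb{R}_{+}^{n}}\Phi\Delta\Phi\,d\bar{y}dy_n
\]
granted by Lemma~\ref{lem:gammaDgamma}, and use Lemma~\ref{lem:qE} (which applies since $n>6$ ensures $E\in D^{1,2}(\mathbb{R}_+^n)$) to discard the middle term, obtaining the inequality
\[
-\int_{\mathbb{R}_{+}^{n}}\gamma\Delta\gamma\,d\bar{y}dy_{n}\ge\int_{\partial\mathbb{R}_{+}^{n}}q\Phi\,d\bar{y}-\int_{\mathbb{R}_{+}^{n}}\Phi\Delta\Phi\,d\bar{y}dy_n.
\]
From here only the reduction from $\Phi=\tilde\Phi_1+\tilde\Phi_2$ to $\tilde\Phi_1$ alone is needed.

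Next I would exploit the explicit polynomial structure furnished by Corollary~\ref{cor:Phi1e2}: $\tilde\Phi_1=R_{ninj}(q)\,y_iy_j\,F_1(|\bar y|,y_n)$ and $\tilde\Phi_2=\bar R_{ijkl}(q)\,y_iy_jy_ky_l\,F_2(|\bar y|,y_n)$, where $F_1,F_2$ are functions of $(|\bar y|^2+(1+y_n)^2,y_n)$. Direct differentiation shows that $\partial_n\tilde\Phi_s$ and, using $\bar R_{iikl}=0$ together with the form of $\partial^2_{ij}U$, also $\Delta\tilde\Phi_s$ keep the same index structure (a factor $R_{ninj}y_iy_j$ or $\bar R_{ijkl}y_iy_jy_ky_l$ times a radial function of $|\bar y|$ with parameter $y_n$). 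Consequently, when expanding the right-hand side into pieces involving $\tilde\Phi_1\tilde\Phi_1$, $\tilde\Phi_1\tilde\Phi_2$, $\tilde\Phi_2\tilde\Phi_1$, $\tilde\Phi_2\tilde\Phi_2$ (and the corresponding expansions with $\partial_n$ or $\Delta$), every cross term has integrand of the form $R_{ninj}\bar R_{tsrp}\,y_iy_jy_ty_sy_ry_p\,\varphi(|\bar y|,y_n)$ and every pure $\tilde\Phi_2$ term has integrand of the form $\bar R_{ijkl}\bar R_{tsrp}\,y_iy_jy_ky_ly_ty_sy_ry_p\,\varphi(|\bar y|,y_n)$.

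Then I would apply the symmetry identities (\ref{eq:Sym3}) and (\ref{eq:Sym4}) to conclude that all such integrands give zero on $\partial\mathbb{R}_+^n$ for each fixed $y_n$, hence (by Fubini) also on $\mathbb{R}_+^n$. This kills every cross term in both $\int_{\partial\mathbb{R}_+^n}q\Phi\,d\bar y$ and $\int_{\mathbb{R}_+^n}\Phi\Delta\Phi\,d\bar y\,dy_n$, as well as the pure $\tilde\Phi_2^2$ and $\tilde\Phi_2\Delta\tilde\Phi_2$ terms, leaving exactly
\[
\int_{\partial\mathbb{R}_+^n}q\Phi\,d\bar y=\int_{\partial\mathbb{R}_{+}^{n}}\frac{\partial\tilde{\Phi}_{1}}{\partial y_{n}}\tilde{\Phi}_{1}\,d\bar{y}+n\int_{\partial\mathbb{R}_{+}^{n}}U^{\frac{2}{n-2}}\tilde{\Phi}_{1}^{2}\,d\bar{y},\qquad \int_{\mathbb{R}_+^n}\Phi\Delta\Phi\,d\bar ydy_n=\int_{\mathbb{R}_+^n}\tilde\Phi_1\Delta\tilde\Phi_1\,d\bar ydy_n,
\]
which combined with the inequality above gives precisely the statement.

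The main obstacle is the bookkeeping in the third step: one has to verify carefully that $\partial_n\tilde\Phi_s$ and $\Delta\tilde\Phi_s$ do not generate new index structures (for instance, a trace term that would produce integrands outside the scope of (\ref{eq:Sym3})--(\ref{eq:Sym4})); the key algebraic facts that make this work are the antisymmetry $\bar R_{iikl}=0$ of the boundary Riemann tensor and the Ricci relations (\ref{eq:Ricci}), together with the observation that $F_1, F_2$ depend on $\bar y$ only through $|\bar y|$. Once these structural points are checked, the vanishing of every unwanted term reduces to a direct appeal to the symmetry identities listed in Section~\ref{sec:Prel}.
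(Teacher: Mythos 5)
Your proposal is correct and follows essentially the same route as the paper: start from Lemma~\ref{lem:gammaDgamma}, discard $\int_{\partial\mathbb{R}_+^n}qE$ via Lemma~\ref{lem:qE} (using $n>6$ for $E\in D^{1,2}$), then expand $\Phi=\tilde\Phi_1+\tilde\Phi_2$ and kill all cross and pure-$\tilde\Phi_2$ contributions with (\ref{eq:Sym3})--(\ref{eq:Sym4}), noting that $R_{nn}=0$ and $\bar R_{iikl}=0$ eliminate the $\delta_{ij}$ part of $\partial^2_{ij}U$ in the bulk terms. The only cosmetic difference is that the paper invokes the defining equations (\ref{eq:Phidef1})--(\ref{eq:Phidef2}) directly to express $\Delta\tilde\Phi_s$, whereas you phrase this as a bookkeeping check on the index structure; the content is identical.
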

\begin{proof}
Combining Lemma \ref{lem:gammaDgamma} and Lemma \ref{lem:qE} we
have that 
\begin{align*}
-\int_{\mathbb{R}_{+}^{n}}\gamma\Delta\gamma d\bar{y}dy_{n} & \ge\int_{\partial\mathbb{R}_{+}^{n}}q\Phi d\bar{y}-\int_{\mathbb{R}_{+}^{n}}\Phi\Delta\Phi d\bar{y}dy\\
 & =\int_{\partial\mathbb{R}_{+}^{n}}\left(\frac{\partial\Phi}{\partial y_{n}}\Phi+nU^{\frac{2}{n-2}}\Phi^{2}\right)d\bar{y}-\int_{\mathbb{R}_{+}^{n}}\Phi\Delta\Phi d\bar{y}dy.
\end{align*}
At this point we can prove immediately by (\ref{eq:Sym3}) that 
\[
\int_{\partial\mathbb{R}_{+}^{n}}\frac{\partial\tilde{\Phi}_{1}}{\partial y_{n}}\tilde{\Phi}_{2}d\bar{y}=\int_{\partial\mathbb{R}_{+}^{n}}\frac{\partial\tilde{\Phi}_{2}}{\partial y_{n}}\tilde{\Phi}_{1}d\bar{y}=\int_{\partial\mathbb{R}_{+}^{n}}nU^{\frac{2}{n-2}}\tilde{\Phi}_{1}\tilde{\Phi}_{2}d\bar{y}=0
\]
and by (\ref{eq:Sym4}) that 
\[
\int_{\partial\mathbb{R}_{+}^{n}}\frac{\partial\tilde{\Phi}_{2}}{\partial y_{n}}\tilde{\Phi}_{2}d\bar{y}=\int_{\partial\mathbb{R}_{+}^{n}}nU^{\frac{2}{n-2}}\tilde{\Phi}_{2}^{2}d\bar{y}=0.
\]
Now, taking in account equation (\ref{eq:Phidef2}), we have
\begin{multline*}
-\int_{\mathbb{R}_{+}^{n}}\Phi\Delta\tilde{\Phi}_{2}d\bar{y}dy=\frac{1}{3}\int_{\mathbb{R}_{+}^{n}}\Phi\bar{R}_{ijkl}y_{k}y_{l}\partial_{ij}^{2}U\\
=\frac{n(n-2)}{3}\int_{\mathbb{R}_{+}^{n}}\Phi\bar{R}_{ijkl}y_{k}y_{l}y_{i}y_{j}(|\bar{y}|^{2}+(1+y_{n})^{2})^{-\frac{n}{2}}=0
\end{multline*}
again by (\ref{eq:Sym3}) and (\ref{eq:Sym4}). Similarly we prove
that $-\int_{\mathbb{R}_{+}^{n}}\tilde{\Phi}_{2}\Delta\tilde{\Phi}_{1}d\bar{y}dy=0$
and we conclude the proof.
\end{proof}

\subsection{Case $n=7$}

In this case we can take $a_{1}=a_{1}'=a_{2}'=0$ in the expression
of $\tilde{\Phi}_{1}$ given in Lemma \ref{lem:Phitilda1}, so we
set
\[
\tilde{\Phi}_{1}=R_{ninj}y_{i}y_{j}A(|\bar{y}|,y_{n}),
\]
where
\[
A(|\bar{y}|,y_{n}):=\frac{1}{12(|\bar{y}|^{2}+(1+y_{n})^{2})^{\frac{n-2}{2}}}+\frac{n-2}{6}\frac{1+y_{n}^{2}-y_{n}}{(|\bar{y}|^{2}+(1+y_{n})^{2})^{\frac{n}{2}}}
\]
and we have the final result of this subsection
\begin{lem}
\label{lem:stimafinalegamma}If $n\ge7$ we have 
\begin{multline}
-\int_{\mathbb{R}_{+}^{n}}\gamma\Delta\gamma d\bar{y}dy_{n}\ge\frac{2}{n^{2}-1}R_{ninj}^{2}\left[\int_{\partial\mathbb{R}_{+}^{n}}A(|\bar{y}|,0)\left.\frac{\partial}{\partial y_{n}}A(|\bar{y}|,y_{n})\right|_{y_{n}=0}|\bar{y}|^{4}d\bar{y}\right.\\
+n\int_{\partial\mathbb{R}_{+}^{n}}\frac{A(|\bar{y}|,0)^{2}}{|\bar{y}|^{2}+1}|\bar{y}|^{4}d\bar{y}\left.+n(n-2)\int_{\mathbb{R}_{+}^{n}}\frac{A(|\bar{y}|,y_{n})}{(|\bar{y}|^{2}+(1+y_{n})^{2})^{\frac{n+2}{2}}}|\bar{y}|^{4}y_{n}^{2}d\bar{y}dy\right]\label{eq:gDg}
\end{multline}
\end{lem}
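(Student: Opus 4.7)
The plan is to build directly on Lemma \ref{lem:Phi1}, which already reduces the left-hand side of (\ref{eq:gDg}) to three quantities involving only $\tilde{\Phi}_1$. With the choice $a_1=a_1'=a_2'=0$ in Lemma \ref{lem:Phitilda1} one has
\[
\tilde{\Phi}_1(y)=R_{nknl}(q)\,y_k y_l\,A(|\bar{y}|,y_n),
\]
so every one of the three integrals in that lemma separates into a product of the curvature contraction $R_{ninj}R_{nknl}$ with a quartic tangential moment of the form $\int f(|\bar{y}|,c)\,y_iy_jy_ky_l\,d\bar{y}$. The engine of the whole argument is the symmetry identity (\ref{eq:Sym5}), which collapses every such moment to $\tfrac{2}{n^{2}-1}R_{ninj}^{2}\int f(|\bar{y}|,c)|\bar{y}|^{4}\,d\bar{y}$; this is precisely why the prefactor $\tfrac{2}{n^2-1}R_{ninj}^2$ appears uniformly in front of the claimed inequality.

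For the bulk term $-\int_{\mathbb{R}_+^n}\tilde{\Phi}_1\Delta\tilde{\Phi}_1$ I would not integrate by parts but instead use equation (\ref{eq:Phidef1}) to substitute $-\Delta\tilde{\Phi}_1=R_{ninj}(q)\,y_n^{2}\,\partial_{ij}^{2}U$. A direct computation splits $\partial^{2}_{ij}U$ (for $i,j\le n-1$) into a $\delta_{ij}$-piece and a $y_iy_j$-piece; the former contracts with $R_{ninj}$ to $R_{nini}=R_{nn}(q)$, which vanishes by (\ref{eq:Ricci}). Only the $y_iy_j$-piece survives, and applying (\ref{eq:Sym5}) converts it into the third term of (\ref{eq:gDg}), with the prefactor $n(n-2)$.

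For the two boundary integrals I restrict to $y_n=0$, where $U^{2/(n-2)}(|\bar{y}|,0)=(|\bar{y}|^{2}+1)^{-1}$ and $\partial_{y_n}\tilde{\Phi}_1|_{y_n=0}=R_{ninj}y_iy_j\,\partial_{y_n}A(|\bar{y}|,y_n)|_{y_n=0}$, since the tangential polynomial $y_iy_j$ is independent of $y_n$. Substituting into $\int \partial_{y_n}\tilde{\Phi}_1\,\tilde{\Phi}_1\,d\bar{y}$ and $n\!\int U^{2/(n-2)}\tilde{\Phi}_1^{2}\,d\bar{y}$ respectively and applying (\ref{eq:Sym5}) a second and third time delivers the first and second terms of (\ref{eq:gDg}). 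The only step that requires care is the bookkeeping that every $\delta_{ij}$ cross-term produced by differentiating $U$ is killed by the Ricci identity $R_{nn}(q)=0$; once this cancellation is tracked the lemma reduces to a substitution into Lemma \ref{lem:Phi1}, and integrability of the tails for $n\ge 7$ is guaranteed by the decay (\ref{eq:gradvq}).
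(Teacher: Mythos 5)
Your proposal is correct and follows essentially the same route as the paper: reduce via Lemma \ref{lem:Phi1}, collapse the quartic tangential moments with (\ref{eq:Sym5}), and for the bulk term substitute $-\Delta\tilde\Phi_1$ from (\ref{eq:Phidef1}). Your explicit remark that the $\delta_{ij}$-piece of $\partial^2_{ij}U$ dies by $R_{nn}(q)=0$ (from (\ref{eq:Ricci})) is a detail the paper uses silently; otherwise the two arguments coincide step for step.
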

in addition for $n=7$ 
\[
-\int_{\mathbb{R}_{+}^{n}}\gamma\Delta\gamma dy\ge\frac{29}{432}\omega_{5}I_{7}^{9}R_{7i7j}^{2}.
\]

\begin{proof}
We have, by (\ref{eq:Sym5})
\begin{align*}
\int_{\partial\mathbb{R}_{+}^{n}}\frac{\partial\tilde{\Phi}_{1}}{\partial y_{n}}\tilde{\Phi}_{1}d\bar{y} & =\int_{\partial\mathbb{R}_{+}^{n}}R_{ninj}y_{i}y_{j}A(|\bar{y}|,0)R_{nlnk}y_{l}y_{k}\left.\frac{\partial}{\partial y_{n}}A(|\bar{y}|,y_{n})\right|_{y_{n}=0}d\bar{y}\\
 & =\frac{2}{n^{2}-1}R_{ninj}^{2}\int_{\partial\mathbb{R}_{+}^{n}}A(|\bar{y}|,0)\left.\frac{\partial}{\partial y_{n}}A(|\bar{y}|,y_{n})\right|_{y_{n}=0}|\bar{y}|^{4}d\bar{y}.
\end{align*}
Similarly we have
\begin{align*}
n\int_{\partial\mathbb{R}_{+}^{n}}U^{\frac{2}{n-2}}\tilde{\Phi}_{1}^{2}d\bar{y} & =n\int_{\partial\mathbb{R}_{+}^{n}}\frac{A(|\bar{y}|,0)^{2}}{|\bar{y}|^{2}+1}R_{ninj}y_{i}y_{j}R_{nlnk}y_{l}y_{k}d\bar{y}\\
 & =\frac{2n}{n^{2}-1}R_{ninj}^{2}\int_{\partial\mathbb{R}_{+}^{n}}\frac{A(|\bar{y}|,0)^{2}}{|\bar{y}|^{2}+1}|\bar{y}|^{4}d\bar{y}.
\end{align*}
Finally, using (\ref{eq:Phidef1}) and (\ref{eq:Sym5}) we have 
\begin{align*}
-\int_{\mathbb{R}_{+}^{n}}\tilde{\Phi}_{1}\Delta\tilde{\Phi}_{1}d\bar{y}dy & =\int_{\mathbb{R}_{+}^{n}}A(|\bar{y}|,y_{n})R_{ninj}y_{i}y_{j}R_{nknl}y_{n}^{2}\partial_{kl}^{2}Ud\bar{y}dy\\
 & =R_{ninj}R_{nknl}\int_{\mathbb{R}_{+}^{n}}A(|\bar{y}|,y_{n})y_{i}y_{j}y_{n}^{2}\partial_{kl}^{2}Ud\bar{y}dy\\
 & =n(n-2)R_{ninj}R_{nknl}\int_{\mathbb{R}_{+}^{n}}\frac{A(|\bar{y}|,y_{n})}{(|\bar{y}|^{2}+(1+y_{n})^{2})^{\frac{n+2}{2}}}y_{i}y_{j}y_{k}y_{l}y_{n}^{2}d\bar{y}dy\\
 & =\frac{2n(n-2)}{n^{2}-1}R_{ninj}^{2}\int_{\mathbb{R}_{+}^{n}}\frac{A(|\bar{y}|,y_{n})}{(|\bar{y}|^{2}+(1+y_{n})^{2})^{\frac{n+2}{2}}}|\bar{y}|^{4}y_{n}^{2}d\bar{y}dy
\end{align*}
which proves the first claim. 

To conclude the proof we will have to estimate several integral quantities
involving the functions $A(|\bar{y}|,y_{n})$ and its derivative
\[
\left.\frac{\partial}{\partial y_{n}}A(|\bar{y}|,y_{n})\right|_{y_{n}=0}=-\frac{5}{4}\left(1+|\bar{y}|^{2}\right)^{-\frac{7}{2}}-\frac{35}{6}\left(1+|\bar{y}|^{2}\right)^{-\frac{9}{2}}
\]
which we compute below. Notice also that, by change of variables,
we have 
\[
\int_{\partial\mathbb{R}_{+}^{7}}\frac{|\bar{y}|^{4}d\bar{y}}{\left(1+|\bar{y}|^{2}\right)^{\alpha}}=\omega_{5}I_{\alpha}^{9}\text{ and}\int_{\mathbb{R}_{+}^{7}}\frac{|\bar{y}|^{4}y_{n}^{\beta}d\bar{y}dy_{n}}{\left((1+y_{7})^{2}+|\bar{y}|^{2}\right)^{\alpha}}=\omega_{5}I_{\alpha}^{9}\int_{0}^{\infty}\frac{t^{\beta}dt}{(1+t)^{2\alpha-10}}.
\]
Keeping in mind (\ref{eq:Iam}) we have 

\begin{equation}
\int_{\partial\mathbb{R}_{+}^{7}}A\frac{\partial}{\partial y_{n}}A|\bar{y}|^{4}d\bar{y}=-\omega_{5}\frac{85}{24}I_{7}^{9}.\label{eq:AdA-1}
\end{equation}
and
\begin{equation}
7\int_{\partial\mathbb{R}_{+}^{7}}\frac{A^{2}|\bar{y}|^{4}}{\left(1+|\bar{y}|^{2}\right)}d\bar{y}=\omega_{5}\frac{191}{72}I_{7}^{9}.\label{eq:A2-1}
\end{equation}
Finally, in light of (\ref{eq:t-integrali}), we have
\begin{equation}
35\int_{\mathbb{R}_{+}^{7}}\frac{A|\bar{y}|^{4}y_{n}^{2}}{\left((1+y_{n}^{2})+|\bar{y}|^{2}\right)^{\frac{9}{2}}}d\bar{y}dy_{n}=\omega_{5}\frac{5}{2}I_{7}^{9}.\label{eq:35A-1}
\end{equation}
By (\ref{eq:AdA-1}), (\ref{eq:A2-1}) and (\ref{eq:35A-1}) we get
the proof
\end{proof}
\begin{proof}[Proof of first claim of Lemma \ref{lem:poho7}]
 By (\ref{eq:poho2}), (\ref{eq:Iam}), and by Lemma \ref{lem:stimafinalegamma}
we immediately get (\ref{eq:pohofinale7}).
\end{proof}

\subsection{Case $n=8$}

For $n=8$ we want to repeat the same strategy used for $n=7$. Unfortunately,
taking all the coefficients equal to zero in $\tilde{\Phi}_{1}$ does
not prove the sign condition. For this case thus we consider 
\[
\tilde{\Phi}_{1}=R_{ninj}y_{i}y_{j}A(|\bar{y}|,y_{n},b),
\]
where
\[
A(|\bar{y}|,y_{8},b):=\frac{1}{12(|\bar{y}|^{2}+(1+y_{8})^{2})^{3}}+\frac{1+y_{n}^{2}-y_{n}}{(|\bar{y}|^{2}+(1+y_{8})^{2})^{4}}+\frac{b}{(|\bar{y}|^{2}+(1+y_{8})^{2})^{5}}.
\]

\begin{lem}
\label{lem:stimafinalegamma8}For $n=8$ and $b=-2$ we have 
\[
-\int_{\mathbb{R}_{+}^{8}}\gamma\Delta\gamma dy\ge\frac{121}{13601}\omega_{6}I_{8}^{10}R_{8i8j}^{2}
\]
\end{lem}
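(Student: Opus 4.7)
The strategy is to mimic the proof of Lemma \ref{lem:stimafinalegamma} verbatim, but now exploit the free parameter $b$ sitting inside $\tilde{\Phi}_{1} = R_{ninj}y_iy_jA(|\bar y|,y_8,b)$, which corresponds to the $a'_2$-coefficient in Lemma \ref{lem:Phitilda1}. The first point to observe is that the derivation of formula (\ref{eq:gDg}) in the proof of Lemma \ref{lem:stimafinalegamma} used only (i) the factored shape $\tilde{\Phi}_1 = R_{ninj}y_iy_jA$ and (ii) equation (\ref{eq:Phidef1}), together with the symmetry identity (\ref{eq:Sym5}). Since Lemma \ref{lem:Phitilda1} guarantees that our present $\tilde{\Phi}_{1}$ still solves (\ref{eq:Phidef1}) for every value of $b$, the same inequality (\ref{eq:gDg}) applies verbatim with $n=8$ and the present $A(|\bar y|,y_8,b)$. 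Combined with Lemma \ref{lem:Phi1}, this reduces the problem to evaluating, at $b=-2$, the three integrals
\begin{align*}
J_{1}(b) &= \int_{\partial\mathbb{R}_{+}^{8}}A(|\bar y|,0,b)\,\partial_{y_8}A(|\bar y|,y_8,b)\bigl|_{y_8=0}|\bar y|^{4}d\bar y,\\
J_{2}(b) &= 8\int_{\partial\mathbb{R}_{+}^{8}}\frac{A(|\bar y|,0,b)^{2}}{1+|\bar y|^{2}}|\bar y|^{4}d\bar y,\\
J_{3}(b) &= 48\int_{\mathbb{R}_{+}^{8}}\frac{A(|\bar y|,y_8,b)}{\bigl(|\bar y|^{2}+(1+y_8)^{2}\bigr)^{5}}|\bar y|^{4}y_8^{2}d\bar y\,dy_8,
\end{align*}
so that the claim is equivalent to $J_{1}(-2)+J_{2}(-2)+J_{3}(-2)\geq \tfrac{63\cdot 121}{2\cdot 13601}\,\omega_{6}I_{8}^{10}$.

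Next I would carry out each of these integrals explicitly. A direct computation of $A|_{y_8=0}$ and $\partial_{y_8}A|_{y_8=0}$ expresses them as polynomials in $(1+|\bar y|^{2})^{-1}$ with coefficients that are linear in $b$; in particular $\partial_{y_8}A|_{y_8=0} = -\tfrac{3}{2}(1+|\bar y|^{2})^{-4} - 8(1+|\bar y|^{2})^{-5} - 10b(1+|\bar y|^{2})^{-6}$. The nine cross-terms of $A\cdot\partial_{y_8}A$ in $J_{1}$, the six cross-terms of $A^{2}/(1+|\bar y|^{2})$ in $J_{2}$, and the three terms of $A/(\cdots)^{5}$ in $J_{3}$ then produce integrals of the form $\omega_{6}I_{m}^{10}$ with $7\leq m\leq 11$ (for $J_{1},J_{2}$) and, in $J_{3}$, products of $\omega_{6}I_{m}^{10}$ with the elementary $t$-integrals $\int_{0}^{\infty}t^{2}(1+t)^{-k}dt$ supplied by (\ref{eq:t-integrali}). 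The reduction relations (\ref{eq:Iam}) bring every $I_{m}^{10}$ back to the single reference integral $I_{8}^{10}$. After these reductions, $J_{1}(b), J_{2}(b)$ become quadratic polynomials in $b$ and $J_{3}(b)$ a linear polynomial in $b$, each one a rational multiple of $\omega_{6}I_{8}^{10}$; evaluating their sum at $b=-2$ produces a single rational number.

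The main obstacle is purely computational. The target coefficient $\tfrac{121}{13601}$ is much smaller than the $\tfrac{29}{432}$ that appeared for $n=7$, which means the three contributions $J_{1}(-2),J_{2}(-2),J_{3}(-2)$ almost cancel, and only a delicate positive remainder survives. Consequently the bookkeeping of the nine-plus-six-plus-three rational terms has to be done uniformly, all expressed as multiples of $\omega_{6}I_{8}^{10}$, so that the final identity is a single rational check. The choice $b=-2$ is presumably dictated by maximizing (or at least making clearly positive) the quadratic $b\mapsto J_{1}(b)+J_{2}(b)+J_{3}(b)$; while one could intrinsically derive it from the stationarity condition $\partial_{b}(J_{1}+J_{2}+J_{3})|_{b=-2}=0$, for the present lemma it suffices to substitute $b=-2$ and verify the arithmetic, after which the bound follows at once from (\ref{eq:gDg}).
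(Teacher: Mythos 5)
Your proposal matches the paper's proof essentially verbatim: both specialize $\tilde{\Phi}_1$ to the one-parameter family $R_{ninj}y_iy_jA(|\bar y|,y_8,b)$ (the $a_2'$-term of Lemma \ref{lem:Phitilda1}), invoke the $n=8$ version of (\ref{eq:gDg}) from Lemma \ref{lem:stimafinalegamma}, reduce the three integrals $J_1,J_2,J_3$ to rational multiples of $\omega_6 I_8^{10}$ via (\ref{eq:Iam}) and (\ref{eq:t-integrali}), and observe that the resulting quadratic $-\tfrac{35}{108}-\tfrac{25}{48}b-\tfrac{7}{64}b^2$ is positive at $b=-2$. Your observation about the near-cancellation and the choice of $b$ as approximately maximizing the quadratic is accurate (the exact maximizer is $b=-50/21$, so $b=-2$ is a convenient nearby value that suffices); incidentally, carrying out the arithmetic gives $\tfrac{2}{63}\cdot\tfrac{121}{432}=\tfrac{121}{13608}$, so the paper's stated denominator $13601$ appears to contain a typo.
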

\begin{proof}
We can recast (\ref{eq:gDg}) for $n=8$ obtaining
\begin{multline}
-\int_{\mathbb{R}_{+}^{8}}\gamma\Delta\gamma d\bar{y}dy_{8}\ge\frac{2}{63}R_{8i8j}^{2}\left[\int_{\partial\mathbb{R}_{+}^{8}}A(|\bar{y}|,0,b)\left.\frac{\partial}{\partial y_{8}}A(|\bar{y}|,y_{8},b)\right|_{y_{8}=0}|\bar{y}|^{4}d\bar{y}\right.\\
+8\int_{\partial\mathbb{R}_{+}^{8}}\frac{A(|\bar{y}|,0,b)^{2}|\bar{y}|^{4}}{|\bar{y}|^{2}+1}d\bar{y}\left.+48\int_{\mathbb{R}_{+}^{8}}\frac{A(|\bar{y}|,y_{8},b)|\bar{y}|^{4}y_{8}^{2}}{(|\bar{y}|^{2}+(1+y_{8})^{2})^{5}}d\bar{y}dy\right].\label{eq:stimafinal8}
\end{multline}
We have 
\begin{equation}
\frac{1}{\omega_{6}}\int_{\partial\mathbb{R}_{+}^{8}}\left.A\frac{\partial A}{\partial y_{8}}\right|_{y_{8}=0}|\bar{y}|^{4}d\bar{y}=\left[-\frac{21}{4}-\frac{35}{12}b-\frac{35}{64}b^{2}\right]I_{8}^{10},\label{eq:AdA8}
\end{equation}
\begin{equation}
\frac{8}{\omega_{6}}\int_{\partial\mathbb{R}_{+}^{8}}\frac{A^{2}|\bar{y}|^{4}}{|\bar{y}|^{2}+1}d\bar{y}=I_{8}^{10}\left[\frac{221}{54}+\frac{85}{36}b+\frac{7}{16}b^{2}\right]\label{eq:AA8}
\end{equation}
and
\begin{equation}
\frac{48}{\omega_{6}}\int_{\mathbb{R}_{+}^{8}}\frac{A(|\bar{y}|,y_{8},b)|\bar{y}|^{4}y_{8}^{2}}{(|\bar{y}|^{2}+(1+y_{8})^{2})^{5}}d\bar{y}dy=I_{8}^{10}\left[\frac{5}{6}+b\frac{5}{144}\right].\label{eq:finale8}
\end{equation}
So by (\ref{eq:AdA8}), (\ref{eq:AA8}) and (\ref{eq:finale8}), the
inequality (\ref{eq:stimafinal8}) becomes
\[
-\int_{\mathbb{R}_{+}^{8}}\gamma\Delta\gamma d\bar{y}dy_{8}\ge\frac{2}{63}R_{8i8j}^{2}\omega_{6}I_{8}^{10}\left[-\frac{35}{108}-\frac{25}{48}b-\frac{7}{64}b^{2}\right]
\]
which for $b=-2$ gives the claim.
\end{proof}
\begin{proof}[Proof of second claim of Lemma \ref{lem:poho7}]
 By (\ref{eq:poho2}), (\ref{eq:Iam}), and by Lemma \ref{lem:stimafinalegamma8}
we immediately get (\ref{eq:pohofinale8}).
\end{proof}

\section{\label{sec:6}Case $n=6$}

When dealing with low dimensions, often it is convenient to work in
cylindrical sets 
\[
D_{r}^{+}:=[0,r]\times B_{r}^{5}\subset\mathbb{R}_{+}^{6}
\]
instead of spheres $B_{r}^{+}=B_{r}^{6}\cap\mathbb{R}_{+}^{6}$. In
the limit $r\rightarrow\infty$ the difference between the two approaches
is of higher order, but the boundary of $D_{r}^{+}$ is easier to
manage. So, we compute the Pohozaev identity on cylindrical sets.
Again, as in \cite[Proposition 14]{GM} we have that, if $x_{i}\rightarrow x_{0}$
is isolated simple blow-up point for $u_{i}$, then
\begin{equation}
P(u_{i},r)\ge R(U,U)+R(U,\delta_{i}^{2}\gamma_{})+R(\delta_{i}^{2}\gamma_{},U)+O(\delta_{i}^{4})\label{eq:poho1-1}
\end{equation}
where $R(u,v)$ in this case is
\[
R(u,v):=-\int_{D_{r/\delta}^{+}}\left(y^{b}\partial_{b}u+\frac{n-2}{2}u\right)\left[(L_{\hat{g}_{i}}-\Delta)v\right]dy.
\]
Throughout this section we will the following lemma.
\begin{lem}
\label{lem:poho6}Let $x_{i}\rightarrow x_{0}$ is an isolated simple
blow-up point for $u_{i}$ solution of (\ref{eq:Prob-i}) then it
holds
\begin{align}
P(u_{i},r) & \ge R(U,U)+R(U,\delta_{i}^{2}\gamma_{x_{i}})+R(\delta_{i}^{2}\gamma_{x_{i}},U)+O(\delta_{i}^{4})\nonumber \\
 & =\omega_{4}I_{6}^{6}\delta_{i}^{4}\log\left(\frac{1}{\delta_{i}}\right)\left[\frac{8}{45}|\bar{W}(x_{i})|^{2}+\frac{8}{15}R_{6i6s}^{2}(x_{i})\right]+O(\delta_{i}^{4})\label{eq:pohofinale6}
\end{align}
\end{lem}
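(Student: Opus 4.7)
The plan is to extend the argument of \cite[Prop.~14]{GM}, carried out there for $n\ge 7$, to the borderline dimension $n=6$, where the same computation produces a logarithmic divergence $\log(1/\delta_{i})$ in place of a finite integral. The proof splits naturally into two steps.

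First, I would derive the inequality
\[
P(u_{i},r)\ge R(U,U)+R(U,\delta_{i}^{2}\gamma_{x_{i}})+R(\delta_{i}^{2}\gamma_{x_{i}},U)+O(\delta_{i}^{4}).
\]
To this end, apply Theorem \ref{thm:poho} to $u_{i}$ on the cylindrical domain $D_{r}^{+}$ rather than a half-ball (as already flagged at the beginning of Section \ref{sec:6}: the rescaled lateral and top faces of $D_{r/\delta_{i}}^{+}$ are flat, which makes the subsequent symmetry cancellations transparent), rescale to $v_{i}$ on $D_{r/\delta_{i}}^{+}$, and use the pointwise expansion
\[
v_{i}=U+\delta_{i}^{2}\gamma_{x_{i}}+E_{i},\qquad E_{i}=O\!\left(\delta_{i}^{3}(1+|y|)^{-1}\right),
\]
from Proposition \ref{prop:stimawi} together with the corresponding derivative estimates. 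Substituting this expansion and keeping the cross interactions between $U$, $\delta_{i}^{2}\gamma_{x_{i}}$ and the metric perturbation $(L_{\hat{g}_{i}}-\Delta)$ produces the three $R$-terms, while all other contributions are of order $O(\delta_{i}^{4})$: the decay rates of $\gamma_{x_{i}}$ from Lemma \ref{lem:vq} and of $E_{i}$ from Proposition \ref{prop:stimawi} are exactly what is needed to cap the remaining integrals, and the purely quadratic remainder $R(E_{i},E_{i})$ is absorbed by dropping a positive bulk term, which is where the inequality ``$\ge$'' arises.

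Second, I would compute the three $R$-terms to isolate the $\delta_{i}^{4}\log(1/\delta_{i})$ piece. Substituting the expansion (\ref{eq:gij}) of $\hat{g}_{i}^{ab}$, the second order metric perturbations contracted against $U$ and the third order terms integrate to zero by the symmetries (\ref{eq:Sym1})--(\ref{eq:Sym4}) and the trace relations (\ref{eq:Ricci}). The surviving contributions come from the fourth order coefficients
\[
\tfrac{1}{20}\bar{R}_{ikjl,mp}+\tfrac{1}{15}\bar{R}_{iksl}\bar{R}_{jmsp},\quad \tfrac{1}{2}R_{ninj,kl}+\tfrac{1}{3}\text{Sym}_{ij}(\bar{R}_{iksl}R_{nsnj}),\quad \tfrac{1}{12}R_{ninj,nn}+\tfrac{2}{3}R_{nins}R_{nsnj},
\]
paired with $U$, and from the second order coefficients paired with $\delta_{i}^{2}\gamma_{x_{i}}$. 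For $n=6$ each of these integrands decays precisely like $|y|^{-6}$ at infinity on $D_{r/\delta_{i}}^{+}$, so radial integration from a fixed radius out to $r/\delta_{i}$ produces a factor $\log(r/\delta_{i})=\log(1/\delta_{i})+O(1)$; the $O(1)$ tail is absorbed into the $O(\delta_{i}^{4})$ remainder. The angular integrations, via (\ref{eq:Sym5}) together with (\ref{eq:Rii})--(\ref{eq:Rnnnn}), then collapse the tensorial expressions into a linear combination of $|\bar{W}(x_{i})|^{2}$ and $R_{6i6s}^{2}(x_{i})$; the radial integrals, evaluated through (\ref{eq:t-integrali}) and (\ref{eq:Iam}) at $n=6$, produce the coefficients $\tfrac{8}{45}$ and $\tfrac{8}{15}$ together with the global factor $\omega_{4}I_{6}^{6}$.

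The main obstacle is the accurate bookkeeping of the log-divergent contributions. Unlike in the case $n\ge 7$, where every term in the three $R$-expressions is individually of order $\delta_{i}^{4}$ with a convergent integral, here several terms are \emph{a priori} of order $\delta_{i}^{4}\log(1/\delta_{i})$ and only assemble into the two clean scalar invariants after systematic application of the symmetries (\ref{eq:Sym1})--(\ref{eq:Sym5}) and the trace relations (\ref{eq:Rii})--(\ref{eq:Rnnnn}). Particular care is required in $R(U,\delta_{i}^{2}\gamma_{x_{i}})$, where one expands $\partial_{ab}\gamma$ through the decomposition $\gamma=\tilde{\Phi}_{1}+\tilde{\Phi}_{2}+E$ of Section \ref{sec:characterization} (Corollary \ref{cor:Phi1e2}) and tracks which tensorial contractions survive the $\bar{y}$-integration; the contribution of $E$, harmonic with non-integrable decay in $n=6$, is controlled via the orthogonality $\int_{\partial\mathbb{R}_{+}^{n}}U^{n/(n-2)}\gamma\,d\bar{y}=0$ recorded in (\ref{eq:Uvq}). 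Once these cancellations are sorted out, the numerical coefficients $\tfrac{8}{45}$ and $\tfrac{8}{15}$ follow from direct, if lengthy, arithmetic.
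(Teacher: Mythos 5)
Your overall architecture matches the paper's: work on the cylinders $D^{+}_{r/\delta_{i}}$, plug the pointwise expansion of $v_{i}$ from Proposition \ref{prop:stimawi} into the Pohozaev functional to isolate $R(U,U)+R(U,\delta_{i}^{2}\gamma)+R(\delta_{i}^{2}\gamma,U)$ up to $O(\delta_{i}^{4})$, and then observe that at $n=6$ the relevant integrands decay like $|y|^{-6}$ so the radial integration out to $r/\delta_{i}$ produces the $\log(1/\delta_{i})$ factor. You also correctly identify that $R(U,U)$ is driven by the fourth-order coefficients of (\ref{eq:gij}) and that the $\gamma$-terms are driven by the second-order ones; the angular reductions via (\ref{eq:Sym1})--(\ref{eq:Sym5}), (\ref{eq:Rii})--(\ref{eq:Rnnnn}) and the bookkeeping via (\ref{eq:t-integrali}), (\ref{eq:Iam}) are exactly what the paper does in Lemma \ref{lem:R(UU)} and Lemma \ref{lem:R(udelta)}.

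However, there is a genuine gap in the way you propose to dispose of the harmonic correction $E$. You suggest expanding $R(U,\delta_{i}^{2}\gamma)$ directly through $\gamma=\tilde{\Phi}_{1}+\tilde{\Phi}_{2}+E$ and then using the boundary orthogonality (\ref{eq:Uvq}) to kill the $E$-contribution. This does not close: $R(U,\delta_{i}^{2}E)$ is a \emph{bulk} integral whose integrand, after inserting the second-order metric perturbation and the decay $|\nabla^{2}E|=O(|y|^{-4})$, behaves exactly like $|y|^{-6}$ on $D^{+}_{r/\delta_{i}}$, so it is \emph{a priori} of the same size $\delta_{i}^{4}\log(1/\delta_{i})$ as the terms you want to keep. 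The relation (\ref{eq:Uvq}) is an orthogonality of $\gamma$ against $U^{n/(n-2)}$ on $\partial\mathbb{R}^{n}_{+}$ and gives no control over this bulk quantity; indeed the paper never invokes (\ref{eq:Uvq}) in Section \ref{sec:6}. What is actually needed is a two-step reduction: first, treating $R(U,\delta_{i}^{2}\gamma)$ and $R(\delta_{i}^{2}\gamma,U)$ \emph{together} and using the defining equation (\ref{eq:vqdef}) of $\gamma$ along with repeated tangential integrations by parts on the cylinder (Lemma \ref{lem:gammaN5}), one rewrites the sum as $-2\delta_{i}^{4}\int_{D^{+}_{r/\delta_{i}}}\gamma\Delta\gamma\,dy+O(\delta_{i}^{4})$; second, one uses the \emph{harmonicity} of $E$ together with the decay (\ref{eq:gradvq}) and a further integration by parts (Lemma \ref{lem:Phi-n5}) to replace $\gamma\Delta\gamma$ by $\tilde{\Phi}_{1}\Delta\tilde{\Phi}_{1}$ up to $O(1)$, after which (\ref{eq:Sym3})--(\ref{eq:Sym5}) discard $\tilde{\Phi}_{2}$. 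It is the harmonicity of $E$ and the symmetrized $R(U,\cdot)+R(\cdot,U)$ structure, not the orthogonality, that neutralize the potential $\log(1/\delta_{i})$ contribution from $E$; without that reduction the argument as written does not establish that the $E$-terms are $O(\delta_{i}^{4})$.
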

\begin{rem}
We recall the following elementary identity, obtained by change of
variables
\begin{equation}
\int_{0}^{r}\int_{B_{r}^{n-1}}\frac{|\bar{y}|^{\beta}dy}{\left[(1+y_{n})^{2}+|\bar{y}|^{2}\right]^{\alpha}}=\int_{0}^{r}\frac{(1+y_{n})^{\beta+n-1}dy_{n}}{(1+y_{n})^{2\alpha}}\int_{B_{r}^{n-1}}\frac{|\bar{y}|^{\beta}d\bar{y}}{\left[1+|\bar{y}|^{2}\right]^{\alpha}}\label{eq:idelem}
\end{equation}
and, finally, that 
\[
\int_{0}^{r/\delta}\frac{y_{n}^{\alpha}}{(1+y_{n})^{\alpha+1}}=\log\left(\frac{1}{\delta}\right)+O(1),\text{ and \ \ensuremath{\int_{0}^{r/\delta}}}\frac{y_{n}^{\alpha+2}-y_{n}^{\alpha}}{(1+y_{n})^{\alpha+3}}=\log\left(\frac{1}{\delta}\right)+O(1)
\]
for $\alpha=0,2,4$
\end{rem}
With these premises we have the following result (in order to simplify
notation, we denote $\delta$ for $\delta_{i}$ and $q$ for $x_{i}$).
\begin{lem}
\label{lem:R(UU)}We have 
\[
R(U,U)=\omega_{4}I_{6}^{6}\delta^{4}\log\left(\frac{r}{\delta}\right)\left[\frac{8}{45}|\bar{W}(q)|^{2}-\frac{16}{15}R_{nins}^{2}\right]+O(\delta^{4}).
\]
\end{lem}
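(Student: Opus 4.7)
My plan is to adapt the computation of $R(U,U)$ from \cite[Proposition 14]{GM}, which was carried out there for $n \ge 7$, to the borderline dimension $n=6$, where the relevant radial integrals acquire a logarithmic divergence on $D^+_{r/\delta}$ instead of the $(n-6)^{-1}$ pole they have in higher dimensions. The first step is to expand $(L_{\hat g_i}-\Delta)U$ by means of the conformal Fermi expansion (\ref{eq:gij}) of $\hat g_i^{ab}(y) = g_q^{ab}(\delta y)$, the volume identity (\ref{eq:|g|}), and the scalar curvature expansions (\ref{eq:Rii})--(\ref{eq:Rnnnn}); this produces
\[
(L_{\hat g_i} - \Delta)U = \delta^2\,\mathcal L_2 U + \delta^3\,\mathcal L_3 U + \delta^4\,\mathcal L_4 U + (\text{higher order}),
\]
where $\mathcal L_k$ is a second-order linear operator whose coefficients are polynomials of degree $k$ in $y$ built from curvature tensors at $q$. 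Setting $\Psi(y) := y^a\partial_a U + 2U$, which depends only on $|\bar y|$ and $y_n$, one has $R(U,U) = -\sum_{k \ge 2}\delta^k \int_{D^+_{r/\delta}} \Psi\, \mathcal L_k U\, dy$.

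The contributions at orders $\delta^2$ and $\delta^3$ vanish. At order $\delta^2$, combining $\partial^2_{ij} U = (n-2)[n r^{-(n+2)/2}y_i y_j - r^{-n/2}\delta_{ij}]$ with the quadratic coefficients produces contractions that reduce to $\bar R_{kl}(q) = 0$, $R_{nn}(q) = 0$ (by (\ref{eq:Ricci})), or to $\bar R_{ikjl}y_iy_jy_ky_l \equiv 0$ (first Bianchi identity). At order $\delta^3$, each polynomial coefficient either carries an odd boundary factor (so the $\bar y$-integral vanishes by parity) or contracts to a first derivative of Ricci along the boundary at $q$, which again vanishes.

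The substantial work is the $\delta^4$ contribution. Using the quartic part of (\ref{eq:gij}), which contains six structurally distinct curvature pieces, together with the scalar curvature term $-\tfrac{n-2}{4(n-1)}R_{\hat g_i}$ controlled by (\ref{eq:Rii}), (\ref{eq:Rtt}), (\ref{eq:Rnnnn}), I apply the symmetry identities (\ref{eq:Sym1})--(\ref{eq:Sym5}) to collapse all surviving tensor contractions into scalar multiples of two invariants: $|\bar W(q)|^2$, using that $\bar R_{kl}(q) = 0$ forces $\bar R_{ikjl}(q)\bar R_{ikjl}(q) = |\bar W(q)|^2$; and $R_{nins}(q)^2$, coming from $R_{ninj}R_{nknl}$ contractions reduced via (\ref{eq:Sym5}). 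The crucial dimensional feature is that in $n = 6$ each of the resulting radial integrals splits via (\ref{eq:idelem}) into a finite boundary part (expressible through the $I_m^\alpha$ of Remark \ref{rem:Iam}) times a $y_n$-integral $\int_0^{r/\delta} \frac{y_n^\alpha\,dy_n}{(1+y_n)^{\alpha+1}} = \log(r/\delta) + O(1)$ for $\alpha \in \{0, 2, 4\}$, as recalled in the remark preceding this lemma. Assembling the prefactors yields the stated formula with coefficients $\tfrac{8}{45}$ and $-\tfrac{16}{15}$; as a consistency check, these match those obtained by formally taking $n\to 6$ in the general formula of \cite{GM} with $(n-6)^{-1}$ replaced by $\log(r/\delta)$.

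The main obstacle is bookkeeping at order $\delta^4$: the quartic expansion of $\hat g_i^{ab}$ has many terms, the scalar curvature contribution must be carefully extracted via (\ref{eq:Rii})--(\ref{eq:Rnnnn}), and the derivative piece $(\partial_a \hat g_i^{ab})\partial_b U$ must be handled at this order. The ultimate cancellations that produce exactly $\tfrac{8}{45}|\bar W|^2$ and $-\tfrac{16}{15}R_{nins}^2$ are numerous, but routine once the symmetry identities are systematically applied.
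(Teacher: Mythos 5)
Your proposal is correct and follows essentially the same route as the paper: expand $(L_{\hat g_i}-\Delta)U$ using the conformal Fermi expansion (\ref{eq:gij}) and (\ref{eq:Rii})--(\ref{eq:Rnnnn}), reduce the surviving tensor contractions via (\ref{eq:Sym1})--(\ref{eq:Sym5}) to multiples of $|\bar W(q)|^2$ and $R_{nins}^2$, split the radial integrals over $D_{r/\delta}^+$ with (\ref{eq:idelem}), and exploit the logarithmic divergence $\int_0^{r/\delta}\frac{y_n^\alpha\,dy_n}{(1+y_n)^{\alpha+1}}=\log(1/\delta)+O(1)$ that replaces the $(n-6)^{-1}$ pole of the $n\geq 7$ case. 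The paper organizes the bookkeeping by structural pieces $A_1,\dots,A_4$ (second-order, trace, first-derivative, and scalar-curvature terms) rather than by powers of $\delta$ and does not isolate the vanishing $\delta^2,\delta^3$ orders as a separate step, but this is a cosmetic difference; your consistency check against the formal $n\to 6$ limit of the $n\geq 7$ formula (with $(n-6)^{-1}\mapsto\log(r/\delta)$) is a sound sanity check that the paper does not make explicit.
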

\begin{proof}
The proof is similar to \cite[Lemma 15]{GM}. We focus here on the
main differences, omitting the standard calculations. By definition
of $L_{\hat{g}_{i}}$ we have
\begin{align*}
R(U,U) & =\frac{\left(n-2\right)^{2}}{2}\int_{D_{r\delta^{-1}}^{+}}\frac{|y|^{2}-1}{\left[(1+y_{n})^{2}+|\bar{y}|^{2}\right]^{n+1}}ny_{i}y_{j}\left(g^{ij}(\delta y)-\delta^{ij}\right)dy\\
 & -\frac{\left(n-2\right)^{2}}{2}\int_{D_{r\delta^{-1}}^{+}}\frac{|y|^{2}-1}{\left[(1+y_{n})^{2}+|\bar{y}|^{2}\right]^{n}}\left(g^{jj}(\delta y)-1\right)dy\\
 & -\frac{\left(n-2\right)^{2}}{2}\int_{D_{r\delta^{-1}}^{+}}\frac{|y|^{2}-1}{\left[(1+y_{n})^{2}+|\bar{y}|^{2}\right]^{n}}\delta\partial_{i}g^{ij}(\delta y)y_{j}dy\\
 & -\frac{\left(n-2\right)^{2}}{8(n-1)}\int_{D_{r\delta^{-1}}^{+}}\frac{|y|^{2}-1}{\left[(1+y_{n})^{2}+|\bar{y}|^{2}\right]^{n-1}}\delta^{2}R_{g}(\delta y)dy+O(\delta^{4})\\
 & =:A_{1}+A_{2}+A_{3}+A_{4}+O(\delta^{4}).
\end{align*}
Using the symmetries of the curvature tensor and the expansion of
the metric we have that, for $n=6$, 
\begin{align}
A_{1}= & \delta^{4}\frac{24}{5}R_{nins}^{2}\int_{D_{r\delta^{-1}}^{+}}\frac{(|y|^{2}-1)|\bar{y}|^{2}y_{n}^{4}}{\left[(1+y_{n})^{2}+|\bar{y}|^{2}\right]^{7}}dy\label{eq:A1}\\
 & +\delta^{4}\frac{48}{35}R_{ninj,ji}\int_{D_{r\delta^{-1}}^{+}}\frac{(|y|^{2}-1)|\bar{y}|^{4}y_{n}^{2}}{\left[(1+y_{n})^{2}+|\bar{y}|^{2}\right]^{7}}dy+O(\delta^{4}).\nonumber 
\end{align}
\begin{align}
A_{2}+A_{3} & =-\delta^{4}4R_{nins}^{2}\int_{D_{r\delta^{-1}}^{+}}\frac{(|y|^{2}-1)y_{n}^{4}}{\left[(1+y_{n})^{2}+|\bar{y}|^{2}\right]^{6}}dy\label{eq:A2+A3}\\
 & -\delta^{4}\frac{8}{5}R_{ninj,ij}\int_{D_{r\delta^{-1}}^{+}}\frac{(|y|^{2}-1)|\bar{y}|^{2}y_{n}^{2}}{\left[(1+y_{n})^{2}+|\bar{y}|^{2}\right]^{6}}dy+O(\delta^{4}).\nonumber 
\end{align}
and 
\begin{align}
A_{4} & =\delta^{4}\frac{1}{150}|\bar{W}(q)|^{2}\int_{D_{r\delta^{-1}}^{+}}\frac{(|y|^{2}-1)|\bar{y}|^{2}}{\left[(1+y_{n})^{2}+|\bar{y}|^{2}\right]^{5}}dy\label{eq:A4}\\
 & +\delta^{4}\frac{2}{5}R_{nins}^{2}\int_{D_{r\delta^{-1}}^{+}}\frac{(|y|^{2}-1)y_{n}^{2}}{\left[(1+y_{n})^{2}+|\bar{y}|^{2}\right]^{5}}dy\nonumber \\
 & +\delta^{4}\frac{2}{5}R_{ninj,ij}\int_{D_{r\delta^{-1}}^{+}}\frac{(|y|^{2}-1)y_{n}^{2}}{\left[(1+y_{n})^{2}+|\bar{y}|^{2}\right]^{5}}dy+O(\delta^{4}).\nonumber 
\end{align}
Now, using (\ref{eq:idelem}) we have
\begin{multline*}
\int_{D_{r\delta^{-1}}^{+}}\frac{(|y|^{2}-1)|\bar{y}|^{2}y_{n}^{4}}{\left[(1+y_{n})^{2}+|\bar{y}|^{2}\right]^{7}}\\
=\int_{0}^{r/\delta}y_{n}^{4}dy_{n}\int_{B_{r/\delta}^{5}}\frac{|\bar{y}|^{4}d\bar{y}}{\left[(1+y_{n})^{2}+|\bar{y}|^{2}\right]^{7}}+\int_{0}^{r/\delta}y_{n}^{4}(y_{n}^{2}-1)dy_{n}\int_{B_{r/\delta}^{5}}\frac{|\bar{y}|^{2}d\bar{y}}{\left[(1+y_{n})^{2}+|\bar{y}|^{2}\right]^{7}}\\
=\int_{0}^{r/\delta}\frac{y_{n}^{4}}{(1+y_{n})^{5}}dy_{n}\int_{B_{r/\delta}^{5}}\frac{|\bar{y}|^{4}d\bar{y}}{\left[1+|\bar{y}|^{2}\right]^{7}}+\int_{0}^{r/\delta}\frac{y_{n}^{6}-y_{n}^{4}}{(1+y_{n})^{7}}dy_{n}\int_{B_{r/\delta}^{5}}\frac{|\bar{y}|^{2}d\bar{y}}{\left[1+|\bar{y}|^{2}\right]^{7}}\\
=\omega_{4}\int_{0}^{r/\delta}\frac{y_{n}^{4}}{(1+y_{n})^{5}}dy_{n}\int_{0}^{r/\delta}\frac{\rho^{8}d\bar{y}}{\left[1+\rho^{2}\right]^{7}}+\omega_{4}\int_{0}^{r/\delta}\frac{y_{n}^{6}-y_{n}^{4}}{(1+y_{n})^{7}}dy_{n}\int_{B_{r/\delta}^{5}}\frac{\rho^{6}d\bar{y}}{\left[1+\rho^{2}\right]^{7}}\\
=\log(1/\delta))(I_{7}^{8}+I_{7}^{6})+O(1).
\end{multline*}
In a similar way we proceed for all the terms in (\ref{eq:A1}), (\ref{eq:A2+A3}),
(\ref{eq:A4}), obtaining
\begin{align}
A_{1}= & \omega_{4}\delta^{4}\log\left(\frac{1}{\delta}\right)\left[\frac{24}{5}R_{nins}^{2}(I_{7}^{8}+I_{7}^{6})+\frac{48}{35}R_{ninj,ji}(I_{7}^{10}+I_{7}^{8})\right]+O(\delta^{4}),\label{eq:A1-1}
\end{align}
\begin{align}
A_{2}+A_{3} & =-\omega_{4}\delta^{4}\log\left(\frac{1}{\delta}\right)\left[4R_{nins}^{2}(I_{6}^{6}+I_{6}^{4})+\frac{8}{5}R_{ninj,ij}(I_{6}^{8}+I_{6}^{6})\right]+O(\delta^{4}),\label{eq:A2+A3-1}
\end{align}
and 
\begin{align}
A_{4} & =\omega_{4}\delta^{4}\log\left(\frac{1}{\delta}\right)\frac{1}{150}|\bar{W}(q)|^{2}(I_{5}^{8}+I_{5}^{6})\label{eq:A4-1}\\
 & +\omega_{4}\delta^{4}\log\left(\frac{1}{\delta}\right)\frac{2}{5}\left(R_{nins}^{2}+R_{ninj,ij}\right)(I_{5}^{6}+I_{5}^{4}).\nonumber 
\end{align}
Finally, in light of (\ref{eq:Iam}), by (\ref{eq:A1-1}), (\ref{eq:A2+A3-1})
and (\ref{eq:A4-1}) we get the claim.
\end{proof}
\begin{lem}
\label{lem:gammaN5}We have
\[
R(U,\delta^{2}\gamma)+R(\delta^{2}\gamma,U)=-2\delta^{4}\int_{D_{r\delta^{-1}}^{+}}\gamma\Delta\gamma dy+O(\delta^{4}).
\]
\end{lem}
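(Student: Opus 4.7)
The starting point is the identification, via the defining equation (\ref{eq:vqdef}) of $\gamma$ and the expansion (\ref{eq:gij}) of $\hat g_{i}^{ab}$, that at leading order in $\delta:=\delta_{i}$
\[
(L_{\hat g_i}-\Delta)U = -\delta^{2}\,\Delta\gamma + \text{(lower order in }\delta\text{)},
\]
where the remainder is controlled pointwise using (\ref{eq:|g|}) and (\ref{eq:Rii})--(\ref{eq:Rnnnn}). Writing $\mathcal D:=y^{b}\partial_{b}+\tfrac{n-2}{2}$ for the Pohozaev operator, substituting this expansion into $R(\delta^{2}\gamma, U)$ and using the decay estimates $|\nabla^{\tau}\gamma|\le C(1+|y|)^{4-\tau-n}$ of Lemma \ref{lem:vq} to control the remainder yields
\[
R(\delta^{2}\gamma, U) \;=\; \delta^{4}\!\int_{D^{+}_{r/\delta}}\!\mathcal D\gamma\,\Delta\gamma\, dy + O(\delta^{4}).
\]

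For $R(U,\delta^{2}\gamma) = -\delta^{2}\!\int\mathcal DU\,(L_{\hat g_i}-\Delta)\gamma\, dy$ the plan is to transfer derivatives from $\gamma$ to $\mathcal DU$ by integration by parts. Since $|\hat g_{i}|=1+O(|y|^{N})$ by (\ref{eq:|g|}), the operator $L_{\hat g_i}-\Delta$ differs from a formally self-adjoint divergence-form operator by negligible terms, and the boundary contributions generated on $\partial^{+}D^{+}_{r/\delta}$ and on $\partial' D^{+}_{r/\delta}$ are handled by the decay of $U$ and $\gamma$ and by the pairing of the Robin condition $\partial_{n}\gamma=-nU^{2/(n-2)}\gamma$ with the nonlinear Neumann condition $\partial_{n} U=-(n-2)U^{n/(n-2)}$. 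The leading-order coefficients of $\hat g_{i}^{ab}-\delta^{ab}$ coming from (\ref{eq:gij}) are polynomials homogeneous of degree $2$ in $y$, so the Euler identity yields $(L_{\hat g_i}-\Delta)\mathcal DU = -\delta^{2}\mathcal D\Delta\gamma + \text{(lower order in }\delta\text{)}$. Combined with the algebraic identity $\mathcal D + \mathcal D^{\ast} = -2$ (where $\mathcal D^{\ast}=-y\partial-(n+2)/2$ is the formal $L^{2}$-adjoint), one more integration by parts gives
\[
R(U,\delta^{2}\gamma) = -\delta^{4}\!\int_{D^{+}_{r/\delta}}\!\mathcal D\gamma\,\Delta\gamma\, dy - 2\delta^{4}\!\int_{D^{+}_{r/\delta}}\!\gamma\,\Delta\gamma\,dy + O(\delta^{4}).
\]
Adding this to the expression for $R(\delta^{2}\gamma, U)$, the $\int\mathcal D\gamma\,\Delta\gamma$ contributions cancel exactly and the stated identity follows.

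The principal difficulty is the uniform $O(\delta^{4})$ bookkeeping of all subleading and boundary contributions. For $n=6$ this is a borderline case because $\int_{D^{+}_{r/\delta}}\gamma\,\Delta\gamma\,dy$ itself grows like $\log\delta^{-1}$, so the claim is informative only if every error term is genuinely $O(\delta^{4})$ and not $O(\delta^{4}\log\delta^{-1})$. The most delicate pieces are the scalar-curvature contributions $\delta^{4}\!\int\mathcal D\gamma\,R^{(2)}U\, dy$ and $\delta^{4}\!\int\mathcal DU\,R^{(2)}\gamma\, dy$, where $R^{(2)}$ is the degree-$2$ leading part of $R_{\hat g_{i}}/\delta^{2}$; each of these is individually only $O(\delta^{2})$, but they cancel to $O(\delta^{4})$ after integration by parts, using the vanishing (\ref{eq:Ricci}) of the Ricci tensor at $q$ and the angular symmetries (\ref{eq:Sym1})--(\ref{eq:Sym5}). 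A parallel analysis for $n\ge 7$ was carried out in \cite{GM}, where the faster decay at infinity removes the log subtleties; in the present borderline case one must exploit the refined structural identities (\ref{eq:|g|})--(\ref{eq:Rnnnn}) of conformal Fermi coordinates together with the symmetries (\ref{eq:Sym1})--(\ref{eq:Sym5}).
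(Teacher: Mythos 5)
Your derivation has the same spine as the paper's proof but is packaged more abstractly. Both approaches observe that the leading ($\delta^2$) part of $L_{\hat g_i}-\Delta$ acting on $U$ is precisely $-\delta^{2}\Delta\gamma$ by (\ref{eq:vqdef}), and then integrate by parts to collect multiples of $\int\gamma\Delta\gamma$. The paper does this concretely: it writes $R(U,\delta^{2}\gamma)+R(\delta^{2}\gamma,U)=-\delta^{4}(A_{1}+A_{2}+A_{3})+O(\delta^{4})$, integrates each $A_{i}$ by parts explicitly using (\ref{eq:vqdef}) and the degree-two homogeneity of $\tfrac13\bar R_{ikjl}y_{k}y_{l}+R_{ninj}y_{n}^{2}$, and shows the three extra integrals $\int y_{b}\gamma[\ldots]\partial_{b}\partial_{i}\partial_{j}U\,dy$ cancel, leaving the coefficient $-2$. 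You obtain the same $-2$ from the identity $\mathcal D+\mathcal D^{*}=-2$, after using that $[\ldots]\partial^{2}_{ij}$ is degree-$0$ homogeneous (so it commutes with $\mathcal D$) and that $L_{\hat g_{i}}-\Delta$ is a divergence-form operator (hence nearly self-adjoint, since $|\hat g_{i}|=1+O(|y|^{N})$). This is a genuine conceptual simplification of the paper's bookkeeping and gives a cleaner explanation of where the factor $-2$ comes from.

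The weak point is the error estimate, which is exactly where the $n=6$ case is delicate. Your claim that the two scalar-curvature contributions $\delta^{4}\int\mathcal D\gamma\,R^{(2)}U\,dy$ and $\delta^{4}\int\mathcal DU\,R^{(2)}\gamma\,dy$ ``cancel to $O(\delta^{4})$ after integration by parts'' is not established and, as stated, is doubtful: one integration by parts and the Euler identity $y_{b}\partial_{b}R^{(2)}=2R^{(2)}$ combine the two into a single term proportional to $\delta^{4}\int R^{(2)}U\gamma\,dy$, so there is no sign cancellation there; the angular symmetries reduce this to $R_{nknl}\,\partial^{2}_{kl}R_{g}(0)$ times a radial integral that a priori grows like $(r/\delta)^{2}$, and you have not shown that this coefficient vanishes (the Ricci conditions (\ref{eq:Ricci}) kill the $\delta_{kl}$-trace piece $R_{nn}\bar\Delta R$, but not $R_{nknl}\partial^{2}_{kl}R_{g}$). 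In fairness the paper's own proof is silent on this point — it folds the scalar-curvature contribution into $O(\delta^{4})$ without isolating it — but your write-up presents the cancellation as a key step of the argument without supplying it, so this should be flagged as a gap to be filled before the proof can be considered complete.
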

\begin{proof}
Again, we follow the main lines of \cite[Lemma 16]{GM}. We have by
definition of $R(u,v)$, by (\ref{eq:gij}) and (\ref{eq:gradvq}),
that
\begin{align*}
R(U,\delta^{2}\gamma)+R(\delta^{2}\gamma,U)= & -\delta^{4}\int_{D_{r\delta^{-1}}^{+}}\left(y_{b}\partial_{b}U+2U\right)\left[\frac{1}{3}\bar{R}_{ikjl}y_{k}y_{l}+R_{ninj}y_{n}^{2}\right]\partial_{i}\partial_{j}\gamma dy\\
 & -\delta^{4}\int_{D_{r\delta^{-1}}^{+}}y_{b}\partial_{b}\gamma\left[\frac{1}{3}\bar{R}_{ikjl}y_{k}y_{l}+R_{ninj}y_{n}^{2}\right]\partial_{i}\partial_{j}Udy\\
 & -\delta^{4}\int_{D_{r\delta^{-1}}^{+}}2\gamma\left[\frac{1}{3}\bar{R}_{ikjl}y_{k}y_{l}+R_{ninj}y_{n}^{2}\right]\partial_{i}\partial_{j}Udy+O(\delta^{4})\\
=:- & \delta^{4}(A_{1}+A_{2}+A_{3})+O(\delta^{4}).
\end{align*}
By (\ref{eq:vqdef}), immediately we have
\begin{equation}
A_{3}=2\int_{D_{r\delta^{-1}}^{+}}\gamma\Delta\gamma.\label{eq:A3-1}
\end{equation}
Integrating by parts, and recalling that the index \textbf{$b=1,\dots,n$}
while $i,j,k,l,s=1,\dots,n-1$, we have 
\begin{align}
A_{2}= & 6\int_{D_{r\delta^{-1}}^{+}}\gamma\left[\frac{1}{3}\bar{R}_{ikjl}y_{k}y_{l}+R_{ninj}y_{n}^{2}\right]\partial_{i}\partial_{j}Udy\nonumber \\
 & +\int_{D_{r\delta^{-1}}^{+}}y_{b}\gamma\partial_{b}\left[\frac{1}{3}\bar{R}_{ikjl}y_{k}y_{l}+R_{ninj}y_{n}^{2}\right]\partial_{i}\partial_{j}Udy\nonumber \\
 & +\int_{D_{r\delta^{-1}}^{+}}y_{b}\gamma\left[\frac{1}{3}\bar{R}_{ikjl}y_{k}y_{l}+R_{ninj}y_{n}^{2}\right]\partial_{b}\partial_{i}\partial_{j}Udy\nonumber \\
 & -\int_{0}^{r/\delta}\int_{\partial B_{r/\delta}^{5}}y_{s}\nu_{s}\gamma\left[\frac{1}{3}\bar{R}_{ikjl}y_{k}y_{l}+R_{ninj}y_{n}^{2}\right]\partial_{i}\partial_{j}Ud\sigma dy_{n}\nonumber \\
 & -\int_{B_{r/\delta}^{5}}\left.y_{n}\gamma\left[\frac{1}{3}\bar{R}_{ikjl}y_{k}y_{l}+R_{ninj}y_{n}^{2}\right]\partial_{i}\partial_{j}U\right|_{y_{n}=\frac{r}{\delta}}d\bar{y}.\label{eq:A2intermedio}
\end{align}
Now, we estimate the boundary terms. On $\partial B_{r/\delta}^{5}$
we have $y_{s}\nu_{s}=|\bar{y}|=r/\delta$. Taking in account (\ref{eq:gradvq})
we get
\begin{multline*}
\left|\int_{0}^{r/\delta}\int_{\partial B_{r/\delta}^{5}}y_{s}\nu_{s}\gamma\left[\frac{1}{3}\bar{R}_{ikjl}y_{k}y_{l}+R_{ninj}y_{n}^{2}\right]\partial_{i}\partial_{j}Ud\sigma dy_{n}\right|\\
\le C\int_{0}^{r/\delta}\int_{\partial B_{r/\delta}^{5}}\left(\frac{r}{\delta}\right)^{-5}d\sigma dy_{n}=O(1).
\end{multline*}
Similarly we obtain $\left|\int_{B_{r/\delta}^{5}}\left.y_{n}\gamma_{q}\left[\frac{1}{3}\bar{R}_{ikjl}y_{k}y_{l}+R_{ninj}y_{n}^{2}\right]\partial_{i}\partial_{j}U\right|_{y_{n}=\frac{r}{\delta}}d\bar{y}\right|=O(1)$. 

Moreover, 
\begin{multline*}
\int_{D_{r\delta^{-1}}^{+}}y_{b}\gamma\partial_{b}\left[\frac{1}{3}\bar{R}_{ikjl}y_{k}y_{l}+R_{ninj}y_{n}^{2}\right]\partial_{i}\partial_{j}Udy\\
=\int_{D_{r\delta^{-1}}^{+}}y_{s}\gamma\partial_{s}\left[\frac{1}{3}\bar{R}_{ikjl}y_{k}y_{l}\right]\partial_{i}\partial_{j}Udy+\int_{\mathbb{R}_{+}^{n}}y_{n}\gamma\partial_{n}\left[R_{ninj}y_{n}^{2}\right]\partial_{i}\partial_{j}Udy\\
=2\int_{D_{r\delta^{-1}}^{+}}\gamma\left[\frac{1}{3}\bar{R}_{ikjl}y_{k}y_{l}+R_{ninj}y_{n}^{2}\right]\partial_{i}\partial_{j}Udy=-2\int_{D_{r\delta^{-1}}^{+}}\gamma\Delta\gamma,
\end{multline*}
and, using (\ref{eq:vqdef}) for the first term of (\ref{eq:A2intermedio})
we have
\[
A_{2}=-8\int_{D_{r\delta^{-1}}^{+}}\gamma\Delta\gamma dy+\int_{D_{r\delta^{-1}}^{+}}y_{b}\gamma\left[\frac{1}{3}\bar{R}_{ikjl}y_{k}y_{l}+R_{ninj}y_{n}^{2}\right]\partial_{b}\partial_{i}\partial_{j}Udy+O(1).
\]
For the term $A_{1}$ we integrate by parts twice. As before, all
the boundary terms are estimated by a constant number. So, using the
symmetries of the curvature tensor we have, after the first integration,
\begin{align*}
A_{1}= & \int_{D_{r\delta^{-1}}^{+}}\left(\partial_{i}U+y_{b}\partial_{i}\partial_{b}U+2\partial_{i}U\right)\left[\frac{1}{3}\bar{R}_{ikjl}y_{k}y_{l}+R_{ninj}y_{n}^{2}\right]\partial_{j}\gamma dy\\
 & +\int_{D_{r\delta^{-1}}^{+}}\left(y_{b}\partial_{b}U+2U\right)\partial_{i}\left[\frac{1}{3}\bar{R}_{ikjl}y_{k}y_{l}+R_{ninj}y_{n}^{2}\right]\partial_{j}\gamma dy+O(1)\\
 & =\int_{D_{r\delta^{-1}}^{+}}\left(3\partial_{i}U+y_{b}\partial_{i}\partial_{b}U\right)\left[\frac{1}{3}\bar{R}_{ikjl}y_{k}y_{l}+R_{ninj}y_{n}^{2}\right]\partial_{j}\gamma dy+O(1)
\end{align*}
And, integrating again, 
\begin{align*}
A_{1}= & -\int_{D_{r\delta^{-1}}^{+}}\left(4\partial_{j}\partial_{i}U+y_{b}\partial_{j}\partial_{i}\partial_{b}U\right)\left[\frac{1}{3}\bar{R}_{ikjl}y_{k}y_{l}+R_{ninj}y_{n}^{2}\right]\gamma dy+O(1)\\
= & 4\int_{D_{r\delta^{-1}}^{+}}\gamma\Delta\gamma dy-\int_{D_{r\delta^{-1}}^{+}}y_{b}\partial_{j}\partial_{i}\partial_{b}U\left[\frac{1}{3}\bar{R}_{ikjl}y_{k}y_{l}+R_{ninj}y_{n}^{2}\right]\gamma dy+O(1).
\end{align*}
Adding $A_{1}$, $A_{2}$ and $A_{3}$ we get the proof.
\end{proof}
\begin{lem}
\label{lem:Phi-n5}We have 
\[
\int_{D_{r\delta^{-1}}^{+}}\gamma\Delta\gamma dy=\int_{D_{r\delta^{-1}}^{+}}\tilde{\Phi}_{1}\Delta\tilde{\Phi}_{1}dy+O(1).
\]
\end{lem}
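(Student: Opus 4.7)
The plan is to decompose $\gamma = \tilde\Phi_1 + \tilde\Phi_2 + E$ via Corollary \ref{cor:Phi1e2} and (\ref{eq:E}), and then use $\Delta E = 0$ to write $\Delta\gamma = \Delta\tilde\Phi_1 + \Delta\tilde\Phi_2$. Expanding $\gamma\,\Delta\gamma$ produces the main term $\tilde\Phi_1\Delta\tilde\Phi_1$, three cross-terms involving $\tilde\Phi_2$, and a residual $E\,\Delta\Phi$ term, so the identity reduces to showing that everything except $\int\tilde\Phi_1\Delta\tilde\Phi_1$ is $O(1)$ on $D_R^+ = [0,R]\times B_R^5$ with $R=r/\delta$.

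I would first kill the three cross-terms $\int_{D_R^+}\tilde\Phi_i\Delta\tilde\Phi_j\,dy$ with $(i,j)\neq(1,1)$. Using $\partial_{ij}^2 U = -(n{-}2)\delta_{ij}(\ldots)^{-n/2} + n(n{-}2)\bar y_i\bar y_j(\ldots)^{-(n+2)/2}$, the $\delta_{ij}$-trace piece contracts the tangential Riemann components to $\bar R_{kl}(q)$ or $\bar R_g(q)$, both of which vanish by (\ref{eq:Ricci}). The remaining $\bar y_i\bar y_j$-pieces reduce to slicewise integrals of $R_{ninj}\bar R_{abcd}$ or $\bar R\otimes\bar R$ against monomials of degree $6$ or $8$ in $\bar y$ over the rotationally symmetric $B_R^5$, vanishing by (\ref{eq:Sym3}) and (\ref{eq:Sym4}).

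For the residual term $\int_{D_R^+} E\,\Delta\Phi\,dy$ I would apply Green's identity, using $\Delta E=0$, to reduce to boundary integrals on $\partial D_R^+$. On the top face $\{y_n=R\}$ and the lateral face $\{|\bar y|=R\}$, Lemma \ref{lem:vq} gives $|E|,|\Phi|\le C(1+|y|)^{-2}$ and $|\nabla E|,|\nabla\Phi|\le C(1+|y|)^{-3}$ in $n=6$, so these contributions are $O(R^{5}\cdot R^{-2}\cdot R^{-3})=O(1)$. On the bottom $\{y_n=0\}$, the boundary conditions for $\Phi$ and $E$ combine to give $-\int_{B_R^5}\gamma q\,d\bar y$ with $q = \partial_n\Phi + nU^{2/(n-2)}\Phi$. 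Writing $q=q_1+q_2$, the spherical symmetries (\ref{eq:Sym3}) and (\ref{eq:Sym4}) annihilate $\int_{B_R^5}\tilde\Phi_i q_j\,d\bar y$ for $(i,j)\neq(1,1)$, while $\int_{\mathbb R^5}\tilde\Phi_1 q_1 d\bar y$ is absolutely convergent (the integrand decays as $|\bar y|^{-6}$) and hence $O(1)$.

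The hard part is the leftover $\int_{B_R^5}Eq\,d\bar y$: the naive bound $|Eq|\le C(1+|\bar y|)^{-5}$ only yields $O(\log R)$. To close the estimate I would exploit that $q$ has vanishing zeroth moment on $\mathbb R^{5}$ (the monopole part of $q_1$ vanishes because the matrix $R_{ninj}$ is traceless thanks to $R_{nn}(q)=0$, and the monopole of $q_2$ vanishes because $\bar R_{kl}(q)=\bar R_g(q)=0$) and that all odd moments of $q$ in $\bar y$ vanish by parity. A multipole expansion for the harmonic function $E$ in $\mathbb R_+^6$, with boundary datum $-q - nU^{2/(n-2)}E$, then gives improved decay of $E(\bar y,0)$ at infinity sufficient to make $|Eq|$ integrable in five dimensions, yielding $\int_{B_R^5}Eq\,d\bar y = O(1)$ and completing the proof.
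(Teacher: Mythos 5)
Your decomposition $\gamma=\tilde\Phi_1+\tilde\Phi_2+E$ and the overall plan (Green's identity on $D^+_{r/\delta}$, harmonicity of $E$, symmetry identities (\ref{eq:Sym3})--(\ref{eq:Sym4}) to annihilate cross-terms) match the paper's proof in structure. The cross-term argument via the trace/traceless split of $\partial^2_{ij}U$ is correct, as is the identification of the bottom-face contribution as $-\int_{B_R^5}\gamma q\,d\bar y$.

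The gap is in what you call ``the hard part.'' Your worry that $\int_{B_R^5}Eq\,d\bar y$ is only $O(\log R)$ rests on the bound $|q|\le C(1+|\bar y|)^{-3}$, which is suboptimal. Writing $q=\partial_n\Phi+nU^{\frac{2}{n-2}}\Phi$ restricted to $\{y_n=0\}$ and using the explicit formulas of Corollary \ref{cor:Phi1e2}, one sees that for $n=6$ each summand of $\partial_n\Phi|_{y_n=0}$ has the form $R_{6i6j}y_iy_j\cdot O((1+|\bar y|^2)^{-3})$ or $\bar R_{ijkl}y_iy_jy_ky_l\cdot O((1+|\bar y|^2)^{-4})$ (the $y_n$-derivative leaves the tangential polynomial untouched while raising the power of the denominator), and $U^{1/2}\Phi|_{y_n=0}$ also decays like $(1+|\bar y|)^{-4}$. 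Hence $|q|\le C(1+|\bar y|)^{-4}$, so $|Eq|\le C(1+|\bar y|)^{-6}$, which is integrable on $\mathbb R^5$ and gives $\int_{B_R^5}Eq\,d\bar y=O(1)$ immediately. The proposed multipole expansion, apart from being only sketched, is addressing a non-issue; if left in, the step would have to actually be carried out, and that would be a real hole in the argument as submitted. (For comparison, the paper's proof is even more terse: it integrates by parts twice and asserts all boundary terms on $\partial D^+_{r/\delta}$ are $O(1)$, writing only the lateral face explicitly; the bottom-face terms $\int_{B_R^5}\gamma\,\partial_n\Phi$ and $\int_{B_R^5}\Phi\,\partial_n\gamma$ are bounded by the same improved decay of $\partial_n$ on $\{y_n=0\}$.) So you should replace the final paragraph with the sharper decay estimate for $\partial_n\Phi$ on the boundary; the rest of your argument is sound.
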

\begin{proof}
We have, integrating by parts, and since $E$ is harmonic
\begin{multline}
\int_{D_{r\delta^{-1}}^{+}}\gamma\Delta\gamma dy=\int_{D_{r\delta^{-1}}^{+}}(\Phi+E)\Delta(\Phi+E)dy=\int_{D_{r\delta^{-1}}^{+}}(\Phi+E)\Delta\Phi dy\\
=-\int_{D_{r\delta^{-1}}^{+}}\nabla(\Phi+E)\nabla\Phi dy+\int_{0}^{r/\delta}\int_{\partial B_{r/\delta}^{5}}(\Phi+E)\nabla\Phi\cdot\nu d\sigma dy_{n}.\label{eq:gammaDgamma}
\end{multline}
By the decay of $\gamma$ in (\ref{eq:gradvq}) and by the explicit
expression of $\Phi$ in Corollary \ref{cor:Phi1e2}, we obtain 
\[
|\nabla^{\tau}E(y)|\le C(1+|y|)^{4-\tau-n}\text{ for }\tau=0,1,
\]
and the same holds for $\Phi$. At this point we can easily see that
\[
\left|\int_{0}^{r/\delta}\int_{\partial B_{r/\delta}^{5}}(\Phi+E)\nabla\Phi\cdot\nu d\sigma dy_{n}\right|=O(1).
\]
We can integrate again by parts in (\ref{eq:gammaDgamma}) and, keeping
in mind that again the boundary term is estimate by a constant and
that $E$ is harmonic, we get
\begin{align*}
\int_{D_{r\delta^{-1}}^{+}}\gamma\Delta\gamma dy & =-\int_{D_{r\delta^{-1}}^{+}}\nabla(\Phi+E)\nabla\Phi dy+O(1)\\
 & =\int_{D_{r\delta^{-1}}^{+}}\Delta(\Phi+E)\Phi dy+O(1)=\int_{D_{r\delta^{-1}}^{+}}\Phi\Delta\Phi dy+O(1).
\end{align*}
Now we proceed in Lemma \ref{lem:Phi1}, using (\ref{eq:Sym3}) and
(\ref{eq:Sym4}) to prove that 
\[
\int_{D_{r\delta^{-1}}^{+}}\Phi\Delta\Phi dy=\int_{D_{r\delta^{-1}}^{+}}\tilde{\Phi}_{1}\Delta\tilde{\Phi}_{1}dy
\]
and concluding the proof.
\end{proof}
\begin{lem}
\label{lem:R(udelta)}We have
\[
R(U,\delta^{2}\gamma)+R(\delta^{2}\gamma,U)=\omega_{4}I_{6}^{6}\delta^{4}\log\left(\frac{r}{\delta}\right)\frac{24}{15}R_{nins}^{2}+O(\delta^{4}).
\]
\end{lem}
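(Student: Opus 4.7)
The plan is to combine Lemma~\ref{lem:gammaN5} and Lemma~\ref{lem:Phi-n5}, which together reduce the quantity $R(U,\delta^{2}\gamma)+R(\delta^{2}\gamma,U)$ to $-2\delta^{4}\int_{D_{r/\delta}^{+}}\tilde{\Phi}_{1}\Delta\tilde{\Phi}_{1}\,dy+O(\delta^{4})$. Thus the core task is to evaluate $\int_{D_{r/\delta}^{+}}\tilde{\Phi}_{1}\Delta\tilde{\Phi}_{1}\,dy$ and show it equals $-\tfrac{4}{5}\omega_{4}I_{6}^{6}R_{nins}^{2}\log(1/\delta)+O(1)$.

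First I would use the defining equation (\ref{eq:Phidef1}). Since $\partial_{ij}^{2}U=-(n-2)\delta_{ij}[(1+y_{n})^{2}+|\bar{y}|^{2}]^{-n/2}+n(n-2)y_{i}y_{j}[(1+y_{n})^{2}+|\bar{y}|^{2}]^{-(n+2)/2}$ and since $R_{ninj}\delta_{ij}=R_{nn}=0$ by (\ref{eq:Ricci}), for $n=6$ this gives the clean expression $-\Delta\tilde{\Phi}_{1}=24\,R_{ninj}y_{i}y_{j}y_{n}^{2}[(1+y_{n})^{2}+|\bar{y}|^{2}]^{-4}$. Substituting the explicit form $\tilde{\Phi}_{1}=R_{nknl}y_{k}y_{l}\,A(|\bar{y}|,y_{n})$ from Corollary~\ref{cor:Phi1e2} and applying the symmetry identity (\ref{eq:Sym5}), the curvature contraction collapses to a factor $\tfrac{2}{35}R_{nins}^{2}$, leaving the scalar integral
\[
\int\tilde{\Phi}_{1}\Delta\tilde{\Phi}_{1}\,dy=-\tfrac{48}{35}R_{nins}^{2}\int_{D_{r/\delta}^{+}}A(|\bar{y}|,y_{n})\,|\bar{y}|^{4}y_{n}^{2}\bigl[(1+y_{n})^{2}+|\bar{y}|^{2}\bigr]^{-4}\,dy.
\]

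Next I would split $A$ into its two summands $\tfrac{1}{12}[(1+y_{n})^{2}+|\bar{y}|^{2}]^{-2}$ and $\tfrac{2}{3}(1+y_{n}^{2}-y_{n})[(1+y_{n})^{2}+|\bar{y}|^{2}]^{-3}$, and apply the cylindrical identity (\ref{eq:idelem}) to factor the $\bar{y}$-integral from the $y_{n}$-integral, exactly as in the proof of Lemma~\ref{lem:R(UU)}. The angular $\bar{y}$-integrals produce $\omega_{4}I_{6}^{8}$ and $\omega_{4}I_{7}^{8}$ (up to $O(1)$), while the two $y_{n}$-integrals each contribute a $\log(1/\delta)$ factor, using the logarithmic estimates in the Remark before Lemma~\ref{lem:R(UU)}. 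The recursions in (\ref{eq:Iam}) then yield $I_{6}^{8}=\tfrac{7}{3}I_{6}^{6}$ and $I_{7}^{8}=\tfrac{7}{12}I_{6}^{6}$, so the bracket reduces to $\tfrac{1}{12}I_{6}^{8}+\tfrac{2}{3}I_{7}^{8}=\tfrac{7}{12}I_{6}^{6}$, producing exactly $-\tfrac{4}{5}R_{nins}^{2}\omega_{4}I_{6}^{6}\log(1/\delta)+O(1)$.

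Multiplying by $-2\delta^{4}$ per Lemma~\ref{lem:gammaN5} gives $\tfrac{8}{5}\omega_{4}I_{6}^{6}R_{nins}^{2}\delta^{4}\log(1/\delta)+O(\delta^{4})$, and since $\delta^{4}\log(r/\delta)=\delta^{4}\log(1/\delta)+O(\delta^{4})$ this matches the stated coefficient $\tfrac{24}{15}=\tfrac{8}{5}$. The main obstacle is twofold: carrying through the algebra in (\ref{eq:Iam}) without sign or index slip, and justifying that after the natural rescaling $\bar{y}=(1+y_{n})\bar{z}$ one may replace the truncated disk $B^{5}_{r/(\delta(1+y_{n}))}$ by $B^{5}_{r/\delta}$ at the cost of $O(1)$ only. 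This last point is precisely the type of truncation estimate already handled in the proof of Lemma~\ref{lem:R(UU)}, and I would invoke the same scheme rather than redo it.
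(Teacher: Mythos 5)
Your proposal is correct and follows essentially the same path as the paper's proof: reduce via Lemmas \ref{lem:gammaN5} and \ref{lem:Phi-n5} to $-2\delta^{4}\int\tilde{\Phi}_{1}\Delta\tilde{\Phi}_{1}$, contract with (\ref{eq:Sym5}), split $A$, pass through (\ref{eq:idelem}), and finish with the $I_{m}^{\alpha}$ recursions; the arithmetic ($\tfrac{1}{12}I_{6}^{8}+\tfrac{2}{3}I_{7}^{8}=\tfrac{7}{12}I_{6}^{6}$, giving $\tfrac{8}{5}=\tfrac{24}{15}$) matches the paper's.
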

\begin{proof}
By Lemma \ref{lem:gammaN5} and Lemma \ref{lem:Phi-n5}, taking in
account (\ref{eq:Sym5}), we have
\begin{multline*}
R(U,\delta^{2}\gamma)+R(\delta^{2}\gamma,U)=-2\delta^{4}\int_{D_{r\delta^{-1}}^{+}}\tilde{\Phi}_{1}\Delta\tilde{\Phi}_{1}dy+O(\delta^{4})\\
=\frac{8}{35}\delta^{4}R_{nins}^{2}\left[\int_{D_{r\delta^{-1}}^{+}}\frac{|\bar{y}|^{4}y_{n}^{2}dy}{(|\bar{y}|^{2}+(1+y_{n})^{2})^{6}}+8\int_{D_{r\delta^{-1}}^{+}}\frac{|\bar{y}|^{4}(y_{n}^{2}+y_{n}^{4}-y_{n}^{3})dy}{(|\bar{y}|^{2}+(1+y_{n})^{2})^{7}}\right]+O(\delta^{4})\\
=:\frac{8}{35}\delta^{4}R_{nins}^{2}\left[B_{1}+B_{1}\right]+O(\delta^{4}).
\end{multline*}
By (\ref{eq:idelem}) we have
\begin{align*}
B_{1}= & \int_{0}^{r/\delta}\frac{y_{n}^{2}dy_{n}}{(1+y_{n})^{3}}\int_{B_{r/\delta}^{5}}\frac{|\bar{y}|^{4}d\bar{y}}{(1+|\bar{y}|^{2})^{6}}=\log\left(\frac{1}{\delta}\right)I_{6}^{8}\\
B_{2}= & 8\int_{0}^{r/\delta}\frac{(y_{n}^{2}+y_{n}^{4}-y_{n}^{3})dy_{n}}{(1+y_{n})^{5}}\int_{B_{r/\delta}^{5}}\frac{|\bar{y}|^{4}d\bar{y}}{(1+|\bar{y}|^{2})^{7}}=8\log\left(\frac{1}{\delta}\right)I_{7}^{8}
\end{align*}
and, by (\ref{rem:Iam}) we get the proof.
\end{proof}
\begin{proof}[Proof of Lemma \ref{lem:poho6}.]
 Lemma \ref{lem:R(UU)} and Lemma \ref{lem:R(udelta)} lead us to
(\ref{eq:pohofinale6}).
\end{proof}

\section{\label{sec:Proof}Proof of the main result}

We start proving the following Weyl vanishing property.
\begin{prop}
\label{prop:vanish}Let $n\ge6$ and let $x_{i}\rightarrow x_{0}$
be an isolated simple blow up point for $u_{i}$ solution of (\ref{eq:Prob-i})
Then
\[
W(x_{0})=0.
\]
\end{prop}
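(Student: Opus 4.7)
The approach is the classical Pohozaev blow-up argument of Khuri--Marques--Schoen \cite{KMS}: apply Theorem \ref{thm:poho} to $u_i$ on a small half-ball $B_r^+$ (or cylinder $D_r^+$ in the six-dimensional case) around the isolated simple blow-up point $x_i$, and contradict a positive lower bound on the bulk side $P(u_i,r)$ with an upper bound on the boundary side $\bar P(u_i,r)$ as $i\to\infty$.

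First I would estimate the boundary term $\bar P(u_i,r)$. The defining pointwise decay at an isolated simple blow-up, $M_i\,u_i\circ\psi_i(y)\le C|y|^{2-n}$ on a fixed half-annulus, gives $u_i=O(M_i^{-1})$ and $|\nabla u_i|=O(M_i^{-1})$ uniformly on the fixed sphere $|y|=r$, so that every term in the definition of $\bar P(u_i,r)$ is of order $M_i^{-2}$. Using $M_i=\delta_i^{-1/(p_i-1)}$ and $p_i\to n/(n-2)$ one obtains
\[
|\bar P(u_i,r)|=O(M_i^{-2})=O(\delta_i^{n-2}).
\]
For $n=7,8$ this is $o(\delta_i^4)$, while for $n=6$ it is $O(\delta_i^4)$, strictly smaller than the logarithmically enhanced scale $\delta_i^4\log(1/\delta_i)$ appearing in Lemma \ref{lem:poho6}.

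Next I would combine this with the identity $\bar P(u_i,r)=P(u_i,r)$ and with the lower bounds supplied by Lemma \ref{lem:poho7} for $n=7,8$ and Lemma \ref{lem:poho6} for $n=6$, which in unified form read
\[
P(u_i,r)\;\ge\; c_n\,\delta_i^{4}\,\Theta_i\!\left[\,A_n\,|\bar W(x_i)|^{2}+B_n\,R_{ninj}^{2}(x_i)\,\right]+o(\delta_i^{4}\Theta_i),
\]
with $\Theta_i=1$ for $n=7,8$ and $\Theta_i=\log(1/\delta_i)$ for $n=6$, and with strictly positive constants $c_n,A_n,B_n$ explicitly computed in those lemmas. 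Dividing by $\delta_i^{4}\Theta_i$ and passing to the limit, the upper bound on $\bar P(u_i,r)$ collapses the left side to zero and forces both $|\bar W(x_0)|^{2}=0$ and $R_{ninj}(x_0)=0$ for all indices $i,j$.

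Finally, since in conformal Fermi coordinates at the umbilic boundary point $x_0$ the mixed components $R_{nijk}$ vanish at $0$ by Codazzi together with (\ref{eq:hij}), and since the Ricci tensor satisfies (\ref{eq:Ricci}), the full Weyl tensor at $x_0$ decomposes into its tangential part $\bar W(x_0)$ and its normal-normal part, the latter coinciding with $R_{ninj}(x_0)$ up to trace corrections that vanish thanks to (\ref{eq:Ricci}). Having forced both pieces to zero, we conclude $W(x_0)=0$. The main subtlety of the plan is ensuring the boundary term is beaten strictly by the right-hand side in every dimension, in particular exploiting the $\log$ enhancement in the critical case $n=6$; the genuinely heavy work--the positivity of the Pohozaev integrand and the correction-term contribution--has already been packaged into Lemmas \ref{lem:poho7} and \ref{lem:poho6}.
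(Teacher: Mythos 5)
Your proposal is correct and follows essentially the same route as the paper: an upper bound $P(u_i,r)=\bar P(u_i,r)=O(\delta_i^{n-2})$ (the paper cites \cite[Props.\ 5 and 18]{GM} for this, you re-derive it from the pointwise decay at an isolated simple blow-up), compared against the lower bounds of Lemmas \ref{lem:poho7} and \ref{lem:poho6}, which forces $\bar W(x_0)=0$ and $R_{ninj}(x_0)=0$ and hence $W(x_0)=0$ via the Weyl decomposition at an umbilic boundary point in conformal Fermi coordinates. The only cosmetic difference is that you spell out the $O(\delta_i^{n-2})$ bound and the Weyl-component argument rather than quoting them.
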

\begin{proof}
By \cite[Propositions 5 and 18]{GM} we have 
\[
P(u_{i},r)\le C\delta_{i}^{n-2}.
\]
This, combined with (\ref{eq:pohofinale7}), (\ref{eq:pohofinale8})
and with (\ref{eq:pohofinale6}) gives
\[
|\bar{W}(x_{i})|^{2}+R_{ninj}^{2}(x_{i})\le\left\{ \begin{array}{cc}
C\delta_{i}^{2} & \text{ for }n=8\\
C\delta_{i} & \text{ for }n=7\\
-C\left(\log(\delta_{i})\right)^{-1} & \text{ for }n=6
\end{array}\right.,
\]
which gives the result, since $W(x_{0})=0$ if and only if both $\bar{W}$
and $R_{nins}^{2}$ vanish at $x_{0}$.
\end{proof}
Now we give a series of results whose proofs are very similar to the
ones contained in \cite{GM}, so we will omit them.

First, we can rule out the possibility to have isolated blow up points
which are not simple. As in the previous proposition, for the proof
it is crucial that $P(u_{i},r)$ is strictly positive when $|W(x_{0})|\neq0$,
which we have proved in equations (\ref{eq:pohofinale7}) and (\ref{eq:pohofinale6}). 
\begin{prop}
\label{prop:isolato->semplice}Assume $n\ge6$. Let $x_{i}\rightarrow x_{0}$
be an isolated simple blow up point for $u_{i}$ solution of (\ref{eq:Prob-i}).
Assume $|W(x_{0})|\neq0$. Then $x_{0}$ is isolated simple.
\end{prop}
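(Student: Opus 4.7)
The plan is to argue by contradiction along the standard Khuri--Marques--Schoen rescaling scheme adapted to the boundary Yamabe setting, as carried out in \cite{GM,Al,FA}. Suppose $x_i\to x_0$ is isolated but not simple: then the spherical-average function $w_i$ has a second critical point at some radius $r_i\in(0,\rho)$, and the isolated property, combined with the fact that the first critical point of $w_i$ sits at the bubble scale $\delta_i$, forces $r_i\to 0$ and $r_i/\delta_i\to+\infty$.

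The next step is to rescale at this outer scale by setting $\tilde u_i(y):=r_i^{1/(p_i-1)}u_i(\psi_{x_i}(r_i y))$, which solves a problem of the form (\ref{eq:Prob-i}) in Fermi coordinates with metric $g_i(r_i\,\cdot)$ converging to the Euclidean metric on $\mathbb{R}^n_+$. Using the isolated blow-up bounds $M_i u_i(\psi_{x_i}(y))\le C|y|^{2-n}$ together with the Green-function lower bound and elliptic regularity away from $0$, a subsequence converges in $C^2_{\mathrm{loc}}(\overline{\mathbb{R}^n_+}\setminus\{0\})$ to a positive singular solution $\tilde u$ of
\[
\Delta\tilde u=0\ \text{on }\mathbb{R}^n_+,\qquad \partial_n\tilde u+(n-2)f(x_0)^{-\tau_0}\tilde u^{n/(n-2)}=0\ \text{on }\partial\mathbb{R}^n_+\setminus\{0\},
\]
with $\tilde u(y)\le C|y|^{2-n}$ and with a critical point of its spherical average at $|y|=1$ (inherited from the criticality of $r_i$). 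By the classification of such singular positive solutions, $\tilde u$ is not a standard bubble and admits a two-term expansion $\tilde u(y)=a|y|^{2-n}+A+o(1)$ as $|y|\to 0$ with $a,A>0$.

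Now apply the Pohozaev identity of Theorem \ref{thm:poho} to $u_i$ on a fixed small half-ball $B_r^+$. By Lemmas \ref{lem:poho7} and \ref{lem:poho6}, the local term $P(u_i,r)$ is strictly positive at leading order under the assumption $|W(x_0)|\neq 0$, satisfying
\[
P(u_i,r)\ \ge\ \begin{cases}c\,\delta_i^4 & n=7,8,\\ c\,\delta_i^4\log(1/\delta_i) & n=6,\end{cases}
\]
for some $c>0$ depending on $|W(x_0)|$. On the other hand, a matched-asymptotics computation of $\bar P(u_i,r)$, splitting into the inner bubble at scale $\delta_i$ and the outer profile governed by $\tilde u$ at scale $r_i$, yields a strictly negative limit driven by the cross-term $aA$ in the expansion of $\tilde u$ at the origin, exactly as in the analogous step of \cite[Proposition 19]{GM}. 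The identity $P(u_i,r)=\bar P(u_i,r)$ then yields a contradiction, proving that $x_0$ must be isolated simple. The hard part is this final step: rigorously extracting the leading order and sign of $\bar P(u_i,r)$ through the matched expansion at the two scales $\delta_i\ll r_i$ and pinning down the constants $a,A>0$ in the expansion of $\tilde u$; the sharp sign conditions derived in Sections \ref{sec:7} and \ref{sec:6} are precisely what makes this mechanism work in the low-dimensional range $n=6,7,8$.
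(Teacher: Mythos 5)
Your KMS/\cite{GM} skeleton is the right one, but the way you invoke the Pohozaev sign estimates contains a genuine flaw. Lemmas \ref{lem:poho7} and \ref{lem:poho6} are stated and proved for \emph{isolated simple} blow-up points; their proofs rest on the sharp decay estimate of Proposition \ref{prop:stimawi}, which is only available in the simple case. In your contradiction setup $x_i$ is isolated but assumed \emph{not} simple, so the line ``By Lemmas \ref{lem:poho7} and \ref{lem:poho6}, the local term $P(u_i,r)$ is strictly positive'' is not justified: you cannot apply those lemmas to $u_i$ on a fixed $B_r^+$. The correct step, which you set up but do not complete, is that after rescaling to $\tilde u_i(y)=r_i^{1/(p_i-1)}u_i(\psi_{x_i}(r_iy))$ the origin \emph{is} an isolated simple blow-up point for $\tilde u_i$ — this uses that $r_i$ is the smallest second critical radius of $w_i$ — and it is to $\tilde u_i$ on $B_1^+$, with the rescaled metric $g(r_i\,\cdot)$, that Theorem \ref{thm:poho} and Lemmas \ref{lem:poho7}, \ref{lem:poho6} must be applied. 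One then multiplies the identity by $\tilde u_i(0)^2$ and passes to the limit $i\to\infty$, $r\to0$: the $P$ side has a lower bound tending to zero (the curvature of the rescaled metric is $O(r_i^2)$, so the sign condition only provides nonnegativity asymptotically), while the $\bar P$ side tends to a strictly negative multiple of $aA$, giving the contradiction.

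Separately, you claim $a,A>0$ ``by the classification of such singular positive solutions,'' which is not enough. The nontriviality of the singularity gives $a>0$; the inequality $A\ge0$ follows from a maximum-principle comparison of $\tilde u$ with $a|y|^{2-n}$; and strict positivity $A>0$ must be extracted from the criticality of the rescaled spherical average at $|y|=1$, i.e.\ from the information carried over from $r_i$ being the second critical radius. Since the entire contradiction hinges on $aA>0$, this cannot be left to ``classification.''
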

Next, we can prove a splitting lemma.
\begin{prop}
\label{prop:Splitting 1}Assume $n\ge6$. Given $\beta>0$ and $R>0$
there exist two constants $C_{0},C_{1}>0$ (depending on $\beta$,
$R$ and $(M,g)$) such that, if $u$ is a solution of

\begin{equation}
\left\{ \begin{array}{cc}
L_{g}u=0 & \text{ in }M\\
B_{g}u+(n-2)f^{-\tau_{}}u^{p}=0 & \text{ on }\partial M
\end{array}\right.\label{eq:Prob-p}
\end{equation}
 and $\max_{\partial M}u>C_{0}$, then $\tau:=\frac{n}{n-2}-p<\beta$
and there exist $q_{1},\dots,q_{N}\in\partial M$, with $N=N(u)\ge1$
with the following properties: for $j=1,\dots,N$
\begin{enumerate}
\item Set $r_{j}:=Ru(q_{j})^{1-p}$, then $\left\{ B_{r_{j}}\cap\partial M\right\} _{j}$
are a disjoint collection;
\item we have $\left|u(q_{j})^{-1}u(\psi_{j}(y))-U(u(q_{j})^{p-1}y)\right|_{C^{2}(B_{2r_{j}}^{+})}<\beta$
(here $\psi_{j}$ are the Fermi coordinates at point $q_{j}$; 
\item we have
\begin{align*}
u(x)d_{\bar{g}}\left(x,\left\{ q_{1},\dots,q_{n}\right\} \right)^{\frac{1}{p-1}}\le C_{1} & \text{ for all }x\in\partial M\\
u(q_{j})d_{\bar{g}}\left(q_{j},q_{k}\right)^{\frac{1}{p-1}}\ge C_{0} & \text{ for any }j\neq k.
\end{align*}
Here $d_{\bar{g}}$ is the geodesic distance on $\partial M$.
\end{enumerate}
Assume also $W(x)\neq0$ for any $x\in\partial M$. Then there exists
$d=d(\beta,R)$ such that, for any $u$ solution of (\ref{eq:Prob-p})
with $\max_{\partial M}u>C_{0}$, we have
\[
\min_{\begin{array}{c}
i\neq j\\
1\le i,j\le N(u)
\end{array}}d_{\bar{g}}(q_{i}(u),q_{j}(u))\ge d.
\]
 
\end{prop}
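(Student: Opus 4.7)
The plan is to construct the collection $q_{1},\dots,q_{N}$ satisfying (1)--(3) by an iterative bubble-extraction procedure, and then to derive the uniform separation in the final clause via a blow-up contradiction that exploits Propositions \ref{prop:vanish} and \ref{prop:isolato->semplice}.

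For parts (1)--(3) I would proceed inductively. Starting from a point $q_{1}$ where $u|_{\partial M}$ attains its maximum, rescale by $\delta_{1}=u(q_{1})^{1-p}$ in Fermi conformal coordinates at $q_{1}$ and set $v_{1}(y)=u(q_{1})^{-1}u(\psi_{1}(\delta_{1}y))$. The function $v_{1}$ solves a problem whose coefficients converge locally to those of (\ref{eq:ProbBubble}), so standard elliptic regularity together with the Liouville-type classification of positive solutions on $\mathbb{R}^{n}_{+}$ gives $|v_{1}-U|_{C^{2}(B_{2R}^{+})}<\beta$ once $u(q_{1})$ exceeds a threshold $C_{0}(\beta,R)$; the same argument forces $\tau=n/(n-2)-p<\beta$, since for $\tau$ bounded away from zero the problem would be strictly subcritical and would admit a priori bounds contradicting $u(q_{1})$ being large. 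If $u$ still exceeds $C_{0}$ on $\partial M\setminus B_{r_{1}}$, pick the next local maximum and iterate; the process terminates after finitely many steps because each bubble contributes a fixed quantum of $L^{2(n-1)/(n-2)}(\partial M)$-energy, uniformly bounded via the Sobolev trace inequality. Disjointness in (1), the $C^{2}$-profile in (2), and the pointwise bounds in (3) then follow from the construction and from the decay of $U$.

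For the uniform separation under $W\ne 0$ on $\partial M$, I argue by contradiction. Suppose there exist $u_{k}$ solving (\ref{eq:Prob-p}) and pairs $q_{i_{k}}^{(k)}\ne q_{j_{k}}^{(k)}$ in the associated collections with $d_{\bar g}(q_{i_{k}}^{(k)},q_{j_{k}}^{(k)})\to 0$; after relabelling I assume $u_{k}(q_{i_{k}}^{(k)})\ge u_{k}(q_{j_{k}}^{(k)})$. Item (3) forces $u_{k}(q_{i_{k}}^{(k)})\to\infty$, for otherwise $C_{0}\le u_{k}(q_{j_{k}}^{(k)})d_{\bar g}(q_{i_{k}}^{(k)},q_{j_{k}}^{(k)})^{1/(p_{k}-1)}\to 0$ is violated. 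Set $\delta_{k}=u_{k}(q_{i_{k}}^{(k)})^{1-p_{k}}$ and let $\tilde y_{k}$ be the image of $q_{j_{k}}^{(k)}$ in the rescaled Fermi coordinates at $q_{i_{k}}^{(k)}$, so $|\tilde y_{k}|\ge C_{0}^{p_{k}-1}$ by (3). If $|\tilde y_{k}|$ stays bounded, then along a subsequence $\tilde y_{k}\to\tilde y_{\infty}\ne 0$, and the local-maximum property of $q_{j_{k}}^{(k)}$ combined with the $C^{2}$-convergence in (2) forces $\tilde y_{\infty}$ to be a boundary local maximum of $U$: impossible, as $0$ is the only one. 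If $|\tilde y_{k}|\to\infty$, then every other point of the collection also escapes to infinity in the rescaled picture at $q_{i_{k}}^{(k)}$, so (3) yields the decay bound $u_{k}(x)\le C\,d_{\bar g}(x,q_{i_{k}}^{(k)})^{-1/(p_{k}-1)}$ on a fixed geodesic neighborhood of $q_{i_{k}}^{(k)}$, making $x_{0}:=\lim q_{i_{k}}^{(k)}$ an isolated blow-up point. Proposition \ref{prop:isolato->semplice} then upgrades $x_{0}$ to an isolated simple blow-up point, and Proposition \ref{prop:vanish} forces $W(x_{0})=0$, contradicting the hypothesis.

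The main obstacle is the regime $|\tilde y_{k}|\to\infty$: one has to check carefully that every point of the collection possibly accumulating at $x_{0}$ has rescaled distance from $q_{i_{k}}^{(k)}$ tending to infinity, so that the decay bound from (3) covers a genuinely fixed (non-shrinking) geodesic neighborhood of $q_{i_{k}}^{(k)}$. This requires balancing the two competing scales $\delta_{k}\to 0$ and $d_{\bar g}(q_{i_{k}}^{(k)},q_{j_{k}}^{(k)})\to 0$, and is the step where the non-vanishing Weyl hypothesis enters essentially, through the chain Proposition \ref{prop:vanish} plus Proposition \ref{prop:isolato->semplice}.
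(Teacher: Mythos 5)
The paper itself omits this proof, deferring to \cite{GM} (and ultimately to the Almaraz/Khuri--Marques--Schoen scheme), so I am evaluating your argument against that standard approach. Your high-level structure is the right one: a greedy bubble-extraction gives the collection $\{q_j\}$ satisfying (1)--(3), and the uniform separation is derived by contradiction through the chain isolated blow-up $\Rightarrow$ isolated simple (Proposition \ref{prop:isolato->semplice}) $\Rightarrow$ $W(x_0)=0$ (Proposition \ref{prop:vanish}). Your treatment of the bounded regime $|\tilde y_k|\not\to\infty$ is also essentially correct, once one observes, via item (3), that $u_k(q_{j_k})/u_k(q_{i_k})$ is bounded below, so that $\tilde y_\infty\ne 0$ would be a positive interior local maximum of the trace of $U$, which does not exist.

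The genuine gap is exactly where you flag it, in the regime $|\tilde y_k|\to\infty$, and it is not resolvable by ``checking more carefully''; it requires a different renormalization. Rescaling by $\delta_k=u_k(q_{i_k})^{1-p_k}$ does not make $x_0$ an isolated blow-up point for $u_k$: the decay in item (3) controls $u_k(x)$ only by $C_1 d_{\bar g}(x,\{q_1,\dots,q_N\})^{-1/(p_k-1)}$, and once $|x-q_{i_k}|$ exceeds the (shrinking) distance $d_{\bar g}(q_{i_k},q_{j_k})$ the nearest point of the collection is $q_{j_k}$, not $q_{i_k}$, so you do not obtain $u_k(x)\le C\,d_{\bar g}(x,q_{i_k})^{-1/(p_k-1)}$ on any \emph{fixed} neighborhood of $x_0$. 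Moreover, you have not chosen the pair $(q_{i_k},q_{j_k})$ realizing the minimum of $d_{\bar g}(q_i,q_j)$, so other points of the collection might sit strictly between them, further invalidating the decay claim. The standard fix (in KMS and in \cite{Al}, \cite{GM}) is to set $\sigma_k:=\min_{i\neq j}d_{\bar g}(q_i,q_j)\to 0$, relabel so that $\sigma_k=d_{\bar g}(q_1^{(k)},q_2^{(k)})$, and renormalize by $\sigma_k$ rather than by $\delta_k$, i.e.\ consider $\tilde u_k(y):=\sigma_k^{1/(p_k-1)}u_k(\psi_{q_1^{(k)}}(\sigma_k y))$ on a fixed ball. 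In that rescaled picture every point of the collection lies at rescaled mutual distance at least $1$, so item (3) yields precisely the isolated-blow-up decay $\tilde u_k(y)\le C_1|y|^{-1/(p_k-1)}$ on, say, $B_{1/2}^+$, and the origin is an isolated blow-up point for $\tilde u_k$ \emph{whenever} $\tilde u_k(0)\to\infty$ (the complementary case, $\tilde u_k(0)$ bounded, is ruled out by a separate Harnack/Liouville argument). Only after this renormalization do Propositions \ref{prop:isolato->semplice} and \ref{prop:vanish} apply and deliver the contradiction. In short: your skeleton is correct and your diagnosis of the obstacle is accurate, but the proposal as written stops one step short of the crucial $\sigma_k$-renormalization that makes the isolated blow-up structure appear.
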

Now we can prove our main result.
\begin{proof}[Proof of Theorem \ref{thm:main}.]
 We proceed by contradiction, supposing that there exists a sequence
of solutions $\left\{ u_{i}\right\} _{i}$ of problems (\ref{eq:Prob-i})
and that $x_{i}\rightarrow x_{0}$ is a blow up point for $u_{i}$.
Let $q_{1}(u_{i}),\dots q_{N(u_{i})}(u_{i})$ the sequence of points
given by proposition \ref{prop:Splitting 1}. We can prove that $d_{\bar{g}}(x_{i},q_{k_{i}}(u_{i}))\rightarrow0$
for some sequence of $k_{i}$. So $q_{k_{i}}\rightarrow x_{0}$ is
a blow up point for $u_{i}$. Now by propositions \ref{prop:Splitting 1}
and \ref{prop:isolato->semplice} we have that $q_{k_{i}}\rightarrow x_{0}$
is an isolated simple blow up point for $u_{i}$. Then, by \ref{prop:vanish},
this should imply that $|W(x_{0})|=0$ which contradicts our hypotheses,
proving the theorem.
\end{proof}

\section{Appendix: proofs of Lemma \ref{lem:Phitilda2} and Lemma \ref{lem:Phitilda1} }

We recall a result contained in \cite{KMW}. 
\begin{lem}
\label{lem:Musso}Suppose $n=5$ or $n\ge7$. We have 
\end{lem}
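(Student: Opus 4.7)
The plan is to dispose of Lemma \ref{lem:Musso} by direct reference to \cite{KMW}, since the statement is recalled verbatim from that paper; however, if one wishes to reproduce the argument in situ, the route is entirely computational. The building blocks are functions of the form
\[
F_\alpha(y) := \bigl((1+y_n)^2 + |\bar{y}|^2\bigr)^{-\alpha},
\]
together with their multiplications by the monomials $y_i y_j$ and $y_i y_j y_k y_l$. For these one establishes once and for all the elementary identity
\[
\Delta F_\alpha = 2\alpha(2\alpha + 2 - n)\,F_{\alpha+1},
\]
and the analogous Leibniz expansions for $P(y) F_\alpha$ with $P$ a polynomial of degree at most four. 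Each such Laplacian is then a polynomial in $y$ divided by a single power of $(1+y_n)^2 + |\bar{y}|^2$.

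Next I would verify the asserted formulas term by term. Writing a candidate identity, I would move every factor over to a common denominator and compare coefficients of the polynomial in the numerator. The symmetries of the curvature tensor, encoded in (\ref{eq:Sym1})--(\ref{eq:Sym5}), collapse many otherwise distinct contractions $\bar R_{ikjl} y_k y_l$ or $R_{ninj} y_n^2$ onto a small set of canonical terms, which is what makes the bookkeeping tractable.

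The main obstacle is precisely this bookkeeping: one must match simultaneously the polynomial structure in the $y$ variables and the power of the denominator on both sides of the identity, and the coefficients involve rational functions of $n$ whose denominators $(n-4)(n-6)$ dictate the hypothesis $n=5$ or $n\ge 7$. For $n=6$ the formulas degenerate and one must instead choose $a_1 = a_1' = 0$, which is exactly what happens in Corollary \ref{cor:Phi1e2}; this explains why the lemma is stated only in the non-degenerate dimensions. Once the identities are established, Lemmas \ref{lem:Phitilda2} and \ref{lem:Phitilda1} follow by substituting the candidate expressions for $\tilde\Phi_1$ and $\tilde\Phi_2$ into (\ref{eq:Phidef1}) and (\ref{eq:Phidef2}) and invoking the identities term by term.
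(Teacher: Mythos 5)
Your plan coincides with the paper's: Lemma \ref{lem:Musso} is proved simply by citing \cite[Lemmas A.1 and A.2]{KMW}, and the computational sketch built on the identity $\Delta F_\alpha = 2\alpha(2\alpha+2-n)F_{\alpha+1}$ is indeed the correct way to verify the three formulas directly, with the factor $1/(4(n-6))$ in $\Phi_0$ being the sole source of the hypothesis $n\neq 6$ (the other two claims only need $n\neq 4$). One small correction to your commentary: the curvature symmetry identities (\ref{eq:Sym1})--(\ref{eq:Sym5}) play no role in Lemma \ref{lem:Musso} itself, which contains no curvature contractions; they enter only downstream, in Lemmas \ref{lem:Phitilda2} and \ref{lem:Phitilda1}, and the $n=6$ case is not rescued here by choosing $a_1=a_1'=0$ (that choice is made in Corollary \ref{cor:Phi1e2} for $\tilde\Phi_1,\tilde\Phi_2$, which are then checked directly for $n=6$) since $\Phi_0$ is simply undefined at $n=6$.
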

\begin{enumerate}
\item The function 
\[
\Phi_{0}:=\frac{1}{4(n-6)}\frac{1}{(|\bar{y}|^{2}+(1+y_{n})^{2})^{\frac{n-6}{2}}}+\frac{a_{1}}{(|\bar{y}|^{2}+(1+y_{n})^{2})^{\frac{n-2}{2}}}+a_{2},
\]
 for $a_{1},a_{2}\in\mathbb{R}$ satisfies
\begin{equation}
-\Delta\Phi_{0}:=\frac{1}{(|\bar{y}|^{2}+(1+y_{n})^{2})^{\frac{n-4}{2}}}\label{eq:Phi0}
\end{equation}
\item The function $\Phi_{1}:=\frac{1}{4(n-4)}\frac{y_{n}+1}{(|\bar{y}|^{2}+(1+y_{n})^{2})^{\frac{n-4}{2}}}+a_{1}\frac{y_{n}+1}{(|\bar{y}|^{2}+(1+y_{n})^{2})^{\frac{n}{2}}}=-\left(\frac{1}{n-4}\right)\partial_{n}\Phi_{0},$
for $a_{1}\in\mathbb{R}$ satisfies
\begin{equation}
-\Delta\Phi_{1}:=\frac{y_{n}+1}{(|\bar{y}|^{2}+(1+y_{n})^{2})^{\frac{n-2}{2}}}\label{eq:Phi1}
\end{equation}
\item The function $\Phi_{2}:=\frac{1}{2(n-4)}\frac{1}{(|\bar{y}|^{2}+(1+y_{n})^{2})^{\frac{n-4}{2}}}+\frac{a_{2}}{(|\bar{y}|^{2}+(1+y_{n})^{2})^{\frac{n-2}{2}}}+a_{2}',$
for $a_{2},a_{2}'\in\mathbb{R}$ satisfies
\begin{equation}
-\Delta\Phi_{2}:=\frac{1}{(|\bar{y}|^{2}+(1+y_{n})^{2})^{\frac{n-2}{2}}}\label{eq:Phi2}
\end{equation}
\end{enumerate}
\begin{proof}
The first claim is proved in \cite[Lemma A.1]{KMW} (in particular
in formula (A.2)). The second claim is proved again in \cite[Lemma A.1]{KMW},
while the last claim corresponds to \cite[Lemma A.2]{KMW}.
\end{proof}
\begin{lem}
\label{lem:Phitilda0}Let $n\ge5$. The function 
\[
\tilde{\Phi}_{0}=\left\{ \begin{array}{cc}
\frac{1}{6(n-8)}\frac{1}{(|\bar{y}|^{2}+(1+y_{n})^{2})^{\frac{n-8}{2}}}+a_{1}\frac{1}{(|\bar{y}|^{2}+(1+y_{n})^{2})^{\frac{n-2}{2}}}+a_{2} & \text{ for }n\neq8\\
-\frac{1}{12}\log(|\bar{y}|^{2}+(1+y_{n})^{2})+a_{1}\frac{1}{(|\bar{y}|^{2}+(1+y_{n})^{2})^{3}}+a_{2} & \text{ for }n=8
\end{array}\right.
\]
 for $a_{1},a_{2}\in\mathbb{R}$ satisfies
\begin{equation}
-\Delta\tilde{\Phi}_{0}:=\frac{1}{(|\bar{y}|^{2}+(1+y_{n})^{2})^{\frac{n-6}{2}}}.\label{eq:Phitilda0}
\end{equation}
\end{lem}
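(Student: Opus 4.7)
The strategy is to reduce the problem to the standard formula for the Laplacian of radial power functions in $\mathbb{R}^n$. Observe that if we set $x = (\bar{y}, 1+y_n) \in \mathbb{R}^n$, then
\[
|\bar{y}|^2 + (1+y_n)^2 = |x|^2,
\]
and the Euclidean Laplacian in the $(\bar{y}, y_n)$ coordinates on $\mathbb{R}^n_+$ coincides with the Laplacian in the $x$ variable on the translated half-space $\{x_n \ge 1\}$. In particular, since $y_n \ge 0$ forces $|x| \ge 1 > 0$, we can freely apply the well-known identity
\[
\Delta_x (|x|^{-\beta}) = \beta(\beta - n + 2)\,|x|^{-\beta - 2}
\]
for any $\beta \in \mathbb{R}$, and, in the limit case, $\Delta_x \log|x|^2 = 2(n-2)|x|^{-2}$.

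The plan is then the following. For $n \neq 8$, take $\beta = n-8$. Then
\[
\Delta_x\!\left(\frac{1}{|x|^{n-8}}\right) = (n-8)(-6)\,\frac{1}{|x|^{n-6}} = -6(n-8)\,\frac{1}{(|\bar{y}|^2 + (1+y_n)^2)^{(n-6)/2}},
\]
so dividing by $-6(n-8)$ produces precisely the leading term in the proposed $\tilde{\Phi}_0$. For $n=8$ the exponent collapses and the same computation with $\Delta_x \log|x|^2 = 12\,|x|^{-2}$ shows that $-\frac{1}{12}\log(|\bar{y}|^2 + (1+y_n)^2)$ gives the correct right-hand side $|x|^{-2}$, which is exactly $(|\bar{y}|^2 + (1+y_n)^2)^{-(n-6)/2}$ at $n=8$. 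This handles the only delicate point: the vanishing of the prefactor $(n-8)$ is compensated by passing from a power to a logarithm.

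It remains to check that the two remaining summands are killed by $\Delta$. The constant $a_2$ is trivially harmonic. For the term $a_1 |x|^{-(n-2)}$, the identity above with $\beta = n-2$ gives $\Delta_x(|x|^{-(n-2)}) = (n-2)\cdot 0 \cdot |x|^{-n} = 0$ away from $x=0$; since $|x|\ge 1$ throughout $\mathbb{R}^n_+$, this term is harmonic on the relevant domain and contributes nothing to $-\Delta \tilde{\Phi}_0$. Adding the three pieces yields the stated formula, and the two free parameters $a_1, a_2$ reflect the kernel of $\Delta$ restricted to the two admissible model profiles, in complete analogy with the statements in Lemma~\ref{lem:Musso}. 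The main (and essentially only) obstacle is book-keeping the constant $-6(n-8)$ against the logarithmic substitute at $n=8$; no further analysis is needed.
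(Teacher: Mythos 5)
Your proof is correct and follows essentially the same route as the paper: translate coordinates so that $|\bar{y}|^{2}+(1+y_{n})^{2}$ becomes $r^{2}$, then apply the radial Laplacian formula $\Delta(r^{-\beta})=\beta(\beta-n+2)r^{-\beta-2}$ (with the logarithm substituting for the degenerate power at $n=8$). The paper phrases the same computation as the ODE $-\varphi_{0}''-\frac{n-1}{r}\varphi_{0}'=r^{-(n-6)}$ and leaves the verification to the reader, so your version is if anything slightly more explicit.
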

\begin{proof}
By change of variables we have that 
\begin{equation}
-\Delta\tilde{\Phi}_{0}(\bar{y},y_{n}-1)=\frac{1}{(|\bar{y}|^{2}+y_{n}^{2})^{\frac{n-6}{2}}}=\frac{1}{r^{n-6}},\label{eq:Phitilde-aux}
\end{equation}
where $r:=\sqrt{|\bar{y}|^{2}+y_{n}^{2}}$. So, in spherical coordinates,
set $\varphi_{0}(r)=\tilde{\Phi}_{0}(\bar{y},y_{n}-1),$ (\ref{eq:Phitilde-aux})
becomes 
\begin{equation}
-\varphi_{0}''-\frac{n-1}{r}\varphi_{0}'=\frac{1}{r^{n-6}}\label{eq:Phitilda-rad}
\end{equation}
and one can check that 
\[
\varphi_{0}(r)=\left\{ \begin{array}{cc}
\frac{1}{6(n-8)}\frac{1}{r^{n-8}}+\frac{a_{1}}{r^{n-2}}+a_{2} & \text{ for }n\neq8\\
-\frac{1}{6}\log r+\frac{a_{1}}{r^{6}}+a_{2} & \text{ for }n=8
\end{array}\right.
\]
 solves (\ref{eq:Phitilda-rad}).
\end{proof}
\begin{lem}
\label{lem:beta}Let $n=5$ or $n\ge7$. Set $\beta_{kl}:=\frac{\partial_{kl}^{2}\tilde{\Phi}_{0}}{(n-6)(n-4)}+\frac{\Phi_{0}}{(n-4)}\delta_{kl}$.
Then
\begin{equation}
-\Delta\beta_{kl}=y_{k}y_{l}U.\label{eq:beta}
\end{equation}
\end{lem}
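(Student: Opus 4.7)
The plan is a direct computation that exploits the fact that partial derivatives commute with the Laplacian together with the explicit right-hand sides supplied by Lemma \ref{lem:Phitilda0} and Lemma \ref{lem:Musso}(1). Set $V:=|\bar{y}|^2+(1+y_n)^2$, so that $U=V^{-(n-2)/2}$, $-\Delta\Phi_0=V^{-(n-4)/2}$ and $-\Delta\tilde{\Phi}_0=V^{-(n-6)/2}$.

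The first step is to differentiate the identity $-\Delta\tilde{\Phi}_0=V^{-(n-6)/2}$ twice in the tangential directions. Since $\partial_k V=2y_k$ for $k=1,\dots,n-1$, a direct calculation gives
\[
\partial_{kl}^{2}\bigl(V^{-(n-6)/2}\bigr)=-(n-6)\,\delta_{kl}\,V^{-(n-4)/2}+(n-6)(n-4)\,y_k y_l\,V^{-(n-2)/2}.
\]
Commuting $\Delta$ with $\partial_{kl}^{2}$ (permitted since $\tilde{\Phi}_0$ is smooth on $\mathbb{R}^n_+$), this yields
\[
-\Delta\bigl(\partial_{kl}^{2}\tilde{\Phi}_0\bigr)=-(n-6)\,\delta_{kl}\,V^{-(n-4)/2}+(n-6)(n-4)\,y_k y_l\,U.
\]

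The second step is to divide by $(n-6)(n-4)$ and add the contribution of the $\frac{\Phi_0}{n-4}\delta_{kl}$ term, using $-\Delta\Phi_0=V^{-(n-4)/2}$:
\[
-\Delta\beta_{kl}=-\tfrac{1}{n-4}\,\delta_{kl}\,V^{-(n-4)/2}+y_k y_l\,U+\tfrac{1}{n-4}\,\delta_{kl}\,V^{-(n-4)/2}=y_k y_l\,U,
\]
which is exactly (\ref{eq:beta}).

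There is essentially no obstacle here: the whole point of the definition of $\beta_{kl}$ is that the single $\delta_{kl}V^{-(n-4)/2}$ term produced by differentiating $-\Delta\tilde{\Phi}_0$ twice is precisely cancelled by $\Phi_0/(n-4)$. The only mild care needed is to note that the differentiation index $k,l$ ranges only over $1,\dots,n-1$, so the formulas $\partial_k V=2y_k$ and $\partial_k\partial_l V=2\delta_{kl}$ are valid; the normal direction plays no role in this particular identity.
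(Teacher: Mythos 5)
Your proof is correct and follows essentially the same route as the paper: differentiate $-\Delta\tilde{\Phi}_0=V^{-(n-6)/2}$ twice tangentially, commute $\partial_{kl}^{2}$ with $\Delta$, and then use $-\Delta\Phi_0=V^{-(n-4)/2}$ to cancel the extraneous $\delta_{kl}$ term — which is exactly the paper's argument, just written out in full.
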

\begin{proof}
By (\ref{eq:Phitilda0}) we have $-\Delta\partial_{kl}^{2}\tilde{\Phi}_{0}=\frac{(n-6)(n-4)y_{k}y_{l}}{(|\bar{y}|^{2}+(1+y_{n})^{2})^{\frac{n-2}{2}}}-\frac{(n-6)\delta_{kl}}{(|\bar{y}|^{2}+(1+y_{n})^{2})^{\frac{n-4}{2}}}$
and by (\ref{eq:Phi0}) we get the result. 
\end{proof}
Now we can achieve the prove of the two lemmas.
\begin{proof}[Proof of Lemma \ref{lem:Phitilda2}.]
Since $\bar{R}_{ijkk}=0$ have $\tilde{\Phi}_{2}:=\frac{1}{3}\bar{R}_{ijkl}(q)\partial_{ij}^{2}\left(\frac{\partial_{kl}^{2}\tilde{\Phi}_{0}}{(n-6)(n-4)}\right)=\frac{1}{3}\bar{R}_{ijkl}(q)\partial_{ij}^{2}\beta_{kl}$.
Thus, by (\ref{eq:beta}), we have 
\[
-\Delta\tilde{\Phi}_{2}=\frac{1}{3}\bar{R}_{ijkl}(q)\partial_{ij}^{2}(-\Delta\beta_{kl})=\frac{1}{3}\bar{R}_{ijkl}(q)\partial_{ij}^{2}\left(y_{k}y_{l}U\right)\frac{1}{3}\bar{R}_{ijkl}(q)y_{k}y_{l}\partial_{ij}^{2}U
\]
using the symmetry of the curvature tensor.
\end{proof}
\begin{proof}[Proof of Lemma \ref{lem:Phitilda1}.]
By Lemma \ref{lem:Phitilda2} and Lemma \ref{lem:Phitilda0} we have
that 
\[
-\Delta\left[\frac{\partial_{nn}^{2}\tilde{\Phi}_{0}}{(n-6)(n-4)}+\frac{\Phi_{0}}{n-4}+\Phi_{2}-2\Phi_{1}\right]=y_{n}^{2}U,
\]
so $\tilde{\Phi}_{1}=R_{ninj}(q)\partial_{ij}^{2}\left[\frac{\partial_{nn}^{2}\tilde{\Phi}_{0}}{(n-6)(n-4)}+\frac{\Phi_{0}}{n-4}+\Phi_{2}-2\Phi_{1}\right]$.
The claim follows by direct computation. 
\end{proof}

\end{document}